\theoremstyle{plain}
\newtheorem{thm}{Theorem}[section]
\newtheorem{lem}[thm]{Lemma}
\newtheorem{cor}[thm]{Corollary}
\newtheorem{propalph}{Proposition}
\newtheorem{lemalph}[propalph]{Lemma}
\theoremstyle{definition}
\newtheorem{defn}[thm]{Definition}
\newtheorem*{defn*}{Definition}
\newtheorem{claim}{Claim}
\newtheorem{question}{Question}
\newtheorem*{question*}{Question}
\theoremstyle{definition}
\newcommand{\trunc}[2]{\left\lfloor #1 \right\rfloor_{#2}}
\newcommand{\mesh}{\mathrm{mesh}}
\newcommand{\diam}{\mathrm{diam}}
\newcommand{\intr}{\mathrm{Int}}
\begin{document}

\title[Nadler-Quinn problem]{The Nadler-Quinn problem\\on accessible points of arc-like continua}
\author{Andrea Ammerlaan \and Ana Anu\v{s}i\'{c} \and Logan C. Hoehn}
\date{\today}

\address{Nipissing University, Department of Computer Science \& Mathematics, 100 College Drive, Box 5002, North Bay, Ontario, Canada, P1B 8L7}
\email{ajammerlaan879@my.nipissingu.ca}
\email{anaa@nipissingu.ca}
\email{loganh@nipissingu.ca}

\thanks{This work was supported by NSERC grant RGPIN-2019-05998}

\subjclass[2020]{Primary 54F15, 54C25; Secondary 54F50}
\keywords{Plane embeddings; accessible point; arc-like continuum}

\begin{abstract}
We show that if $X$ is an arc-like continuum, then for every point $x \in X$ there is a plane embedding of $X$ in which $x$ is an accessible point.  This answers a question posed by Nadler in 1972, which has become known as the Nadler-Quinn problem in continuum theory.  Towards this end, we develop the theories of truncations and contour factorizations of interval maps.  As a corollary, we answer a question of Mayer from 1982 about inequivalent plane embeddings of indecomposable arc-like continua.
\end{abstract}

\maketitle

\section{Introduction}
\label{sec:intro}

Let $X$ be a \emph{continuum} (a compact, connected metric space) which is contained in the Euclidean plane $\mathbb{R}^2$.  A point $x \in X$ is \emph{accessible} if there exists a path $\gamma \colon [0,1] \to \mathbb{R}^2$ such that $\gamma(t) \notin X$ for all $0 \leq t < 1$ and $\gamma(1) = x$.  This concept was introduced by Schoenflies \cite{schoenflies1906} in 1906, in his classic work on Jordan curves in the plane, and has been a fundamental part of plane topology ever since.

Accessible points feature prominently within Carath\'{e}odory's theory of prime ends \cite{caratheodory1912} (see \cite{milnor2006} for a modern treatment).  In this context, it is convenient to view a continuum $X \subset \mathbb{R}^2$ as a (proper) subset of the Riemann sphere $\mathbb{S}^2$ (the one-point compactification $\mathbb{R}^2$).  Given a component $U$ of $\mathbb{S}^2 \smallsetminus X$, we consider the circle of prime ends on $\partial U \subseteq X$.  A point $x \in X$ is accessible (from $U$) if and only if there exists a prime end whose principal set is $\{x\}$.  Equivalently, $x$ is accessible (from $U$) if and only if there is an \emph{external ray} in $U$ which lands on $x$; that is, for a given conformal isomorphism $\varphi$ from the open unit disk $\mathbb{D}^2 \subset \mathbb{C}$ to $U$, there exists an angle $\theta \in [0,2\pi)$ for which
\[ \lim_{r \to 1^-} \varphi(re^{i \theta}) = x .\]

Call a space \emph{planar} if it can be embedded in $\mathbb{R}^2$.  Continua $X \subsetneq \mathbb{S}^2$ which are arguably the most amenable to study via prime end theory are those whose complement $U = \mathbb{S}^2 \smallsetminus X$ is connected, and for which $\partial U = X$.  Continua of this type are precisely the planar \emph{tree-like} continua (c.f.\ \cite{bing1951}), meaning those which are homeomorphic to an inverse limit of trees.  Simplest among all tree-like continua are \emph{arc-like} continua, those which are homeomorphic to an inverse limit of arcs.  Notable examples of arc-like continua include the arc $[0,1]$ itself, the $\sin(\frac{1}{x})$-continuum, the Knaster buckethandle continuum, and the pseudo-arc.  Though there are plenty of non-planar tree-like continua, Bing \cite{bing1951} proved in 1951 that all arc-like continua are embeddable in $\mathbb{R}^2$.

Given a planar continuum $X$, in any particular embedding of $X$ in $\mathbb{R}^2$ there may be points which are inaccessible.  For example, Mazurkiewicz \cite{mazurkiewicz1929} proved in 1929 that if $X \subset \mathbb{R}^2$ is an \emph{indecomposable} continuum, i.e.\ a continuum which is not the union of two of its proper subcontinua, then the set of points in $X$ which are accessible is meager.  This naturally leads to a general question: given a planar continuum $X$ and a point $x \in X$, does there exist an embedding of $X$ in $\mathbb{R}^2$ for which $x$ is accessible, or is there some intrinsic topological obstruction in $X$ preventing $x$ from being accessible in any embedding?

A fundamental instance of the above general question arose from work of Nadler and Quinn \cite{nadler-quinn1972, nadler-quinn1973} about compactifications of half-rays and lines.  The problem was formally stated in \cite[p.229]{nadler1972}, and, following \cite{lewis1983} and \cite{problems2002}, we refer to it as the \emph{Nadler-Quinn problem}.  It asks: Given an arc-like continuum $X$ and a point $x \in X$, does there exist an embedding of $X$ in $\mathbb{R}^2$ for which $x$ is accessible?  This problem has appeared in various compilations of continuum theory problems, including as Problem~140 in \cite{lewis1983}, as Question~16 (by J.C.\ Mayer, for indecomposable continua) in \cite{problems2002}, and as the first Question (by H.\ Bruin, J.\ \v{C}in\v{c} and the second author), Question~48 (by W.\ Lewis), and Question~50 (by P.\ Minc, for indecomposable continua) in \cite{problems2018}.  In general, the study of properties of plane embeddings of arc-like continua has generated significant interest over the years, see for example \cite{mazurkiewicz1929, brechner1978, lewis1981, mayer1982, mayer1983, minc-transue1992, debski-tymchatyn1993, minc1997, anusic-bruin-cinc2017, anderson-choquet1959, ozbolt2020}.  The main result of this paper is an affirmative answer to this question of Nadler and Quinn.

\begin{thm}[Main Theorem]
\label{thm:main}
For any arc-like continuum $X$, and any point $x \in X$, there is an embedding of $X$ in $\mathbb{R}^2$ for which $x$ is an accessible point.
\end{thm}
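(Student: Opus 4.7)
The plan is to realize $X$ as an inverse limit $X = \varprojlim([0,1], f_n)$ of arcs with continuous bonding maps $f_n \colon [0,1] \to [0,1]$, which we may assume piecewise linear by a standard approximation argument. The distinguished point $x$ corresponds to a thread $(x_n)_{n \geq 1}$ satisfying $f_n(x_{n+1}) = x_n$. The classical procedure for embedding arc-like continua in the plane (going back to Bing) produces a sequence of zig-zag arcs in $\mathbb{R}^2$ that approximate $X$, where the turning points of the $n$-th stage arc correspond to the contour (local extrema) of $f_n$. In such embeddings the points that end up accessible from the unbounded complementary region are precisely those threads whose $n$-th coordinate lies on the outward side of the contour at every stage. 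My strategy is therefore to modify the inverse system, without changing the homeomorphism type of $X$, so that the thread representing $x$ becomes visible on the outward side of the contour at every level, and then to run the zig-zag embedding on this modified system.

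For this, I would use the two tools announced in the abstract. Truncation of a map $f \colon [0,1] \to [0,1]$ at the level $x_n$ produces a new interval map whose graph agrees with $f$ on one side of $x_n$ and is folded on the other, the effect being that $x_n$ sits on the contour of the truncated map. Contour factorization then decomposes an interval map into a finite composition of elementary maps, each corresponding to a single peak or valley of the contour, so that a long piecewise linear map is broken into combinatorially simple pieces. By applying truncations at each $x_n$ and then contour-factoring the resulting bonding maps, one obtains a refined inverse system, telescoping-equivalent to $(f_n)$, whose bonding maps at level $n$ have the property that $x_n$ appears as a specific extremal vertex of one of the elementary factors.

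The final step is to execute the zig-zag embedding of the refined system, at each stage choosing the orientation of the turn at $x_n$ so that $x_n$ opens outward into the unbounded complementary region. Because the truncations force $x_n$ to be an extremum of an elementary factor, one has the freedom to make this choice; and because the extremum persists under the bonding maps, the outward choices can be made consistently from stage to stage. A straight (or nearly straight) ray from infinity can then be pushed in so as to meet $X$ first at $x$, witnessing accessibility.

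The main obstacle I expect is combinatorial coherence. Truncations applied independently at different levels may alter the bonding maps in ways that destroy the inverse limit's homeomorphism type; contour factorizations proliferate intermediate stages, and one must check that through all these manipulations the limit remains homeomorphic to $X$ and the chosen thread is preserved. Equally delicate is aligning the outward orientations of the turns at $x_n$ across infinitely many stages so that the ray from infinity actually reaches $x$ rather than being trapped by neighbouring zig-zags of ever-finer mesh. Developing systematic theories of truncations and contour factorizations, as promised in the abstract, is presumably what makes this bookkeeping feasible in full generality.
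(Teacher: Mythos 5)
Your high-level strategy---renormalize the inverse system, massage the bonding maps so that the distinguished thread becomes visible, then run a zig-zag/Anderson--Choquet style embedding---is the right shape, and you correctly identify the central difficulty as making infinitely many local choices cohere. But the concrete mechanisms you propose are not the ones in the paper, and in the form you describe them they would not carry the argument.

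The first issue is what ``truncation'' means. You describe truncating $f$ \emph{at the level $x_n$} so that $x_n$ becomes an extremum of the modified map. The paper instead normalizes so that $x = \langle 0,0,\ldots\rangle$ and every $f_i$ fixes $0$; after that, the relevant object is $\trunc{f}{V}$, truncation with respect to a \emph{finite set of values} $V\subset[-1,1]$. This is a vertical ``denoising'' operation: wherever a stretch of the graph of $f$ fails to cross a level in $V$, the stretch is flattened to a constant. Its purpose is not to make $0$ an extremum (it is a fixed point, not a turning point), but to force all contour points of $\trunc{f}{V}$ to take values in $V$, so that a pigeonhole argument can make contour factors stabilize across infinitely many composed bonding maps. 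That is a different operation with a different goal from what you outline.

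The second issue is that the actual obstruction is never identified. The paper's embedding criterion (Proposition~\ref{prop:embed 0 accessible}) is that each bonding map have all \emph{radial departures} of the same orientation --- the absence of a ``zig-zag'' in the graph centered at $0$. Your sketch talks about choosing orientations of turns, which is the right intuition, but without the notion of radial departures and Proposition~\ref{prop:comp dep} describing how they compose, there is no way to state, let alone verify, that the consistency of choices propagates through compositions. Relatedly, the (radial) contour factor $t_f$ is a \emph{single} factor in $f = t_f\circ s$, not a finite decomposition into elementary peak/valley pieces as you describe, and the crucial technical input is the bridge lemma (Lemma~\ref{lem:bridged s}) that lets one choose $s$ so that $s\circ t_{f_3}$ has no negative radial departures. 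Finally, to see that the modified system still has inverse limit homeomorphic to $X$ while preserving the thread $\langle 0,0,\ldots\rangle$, the paper needs Mioduszewski's approximation criterion (Theorem~\ref{thm:mioduszewski}); the operations you propose do not obviously preserve the homeomorphism type, and you flag this concern yourself but provide no mechanism to resolve it. In short, the roadmap is in the right spirit, but the specific tools (value-truncation, radial departures, contour factor as a single factor, the bridge lemma, Mioduszewski's theorem) and the recursive bookkeeping that make them fit together are missing, and the tool you do describe is not the one the paper uses.
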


The question of Nadler and Quinn is relevant for the study of planarity of tree-like continua.  As mentioned above, there are several known examples of non-planar tree-like continua, yet there are still few tools available to establish whether a given tree-like continuum can be embedded in $\mathbb{R}^2$ or not.  
Any new such tools would be very valuable in the field.  For example, consider the Plane Fixed Point Problem, which asks whether for any continuum $X \subset \mathbb{R}^2$ for which $\mathbb{R}^2 \smallsetminus X$ is connected, does $X$ have the \emph{fixed-point property}, i.e.\ does every continuous function of $X$ to itself leave at least one point fixed?  This is an old and central open question in continuum theory dating back to 1930 (or earlier, c.f.\ \cite{ayres1930}), and has been called ``the most interesting outstanding problem in plane topology'' by Bing \cite{bing1969}.  There are several examples of tree-like continua which do not have the fixed-point property, the first given by Bellamy \cite{bellamy1980} in 1980, and subsequent ones in \cite{oversteegen-rogers1980}, \cite{oversteegen-rogers1982}, \cite{oversteegen-rogers1980}, \cite{fearnley-wright1993}, \cite{minc1992}, \cite{minc1996}, \cite{minc1999}, \cite{minc1999;2}, \cite{minc2000}, and \cite{gutierrez-hoehn2018} (see also \cite{fugate-mohler1977}).  For almost all of these examples, it is still not known whether they can be embedded in $\mathbb{R}^2$ or not.  If any of them were to be planar, it would immediately give a negative answer to the Plane Fixed Point Problem.

Two other classic, central open problems in continuum theory, the classification of \emph{homogeneous} tree-like continua, and the classification of \emph{hereditarily equivalent} continua, have recently been resolved for planar continua \cite{hoehn-oversteegen2016}, \cite{hoehn-oversteegen2020}, but remain open in full generality.  A space $X$ is homogeneous if for any $x,y \in X$, there exists a homeomorphism $h \colon X \to X$ such that $h(x) = y$.  A continuum $X$ is hereditarily equivalent if for any subcontinuum $Y \subseteq X$ containing more than one point, $X \approx Y$.  At present, the pseudo-arc is the only known homogeneous tree-like continuum (containing more than one point).  Each hereditarily equivalent continuum is tree-like \cite{cook1970}, and the only known examples of such continua (containing more than one point) are the arc $[0,1]$ and the pseudo-arc.  Any subsequent progress on either of these two classification problems, therefore, will necessarily concern the study of non-planar tree-like continua.

The question of Nadler and Quinn can be equivalently formulated as a problem about embeddability of tree-like continua in $\mathbb{R}^2$ as follows.  Given an arc-like continuum $X$ and a point $x \in X$, form a new continuum $X_x^\perp$ by attaching an arc $A$ to $X$ so that one endpoint of $A$ is glued to $x$, and $A$ is otherwise disjoint from $X$.  The result is a tree-like continuum, and the collection of all such continua $X_x^\perp$ is in a sense the smallest possible deviation, within the class of tree-like continua, from the family of arc-like continua.  The Nadler-Quinn problem asks whether every such continuum $X_x^\perp$ is planar.  Thus, the Nadler-Quinn problem was arguably the most basic non-trivial open problem about embeddability of tree-like continua in $\mathbb{R}^2$, and its resilience against attack underscores the challenges inherent in this area.  

The affirmative answer to the Nadler-Quinn problem implies, as a corollary (see Corollary~\ref{cor:ineq embeddings} below) that for any arc-like continuum $X$ which contains an indecomposable subcontinuum (with more than one point), there exists an uncountable collection of pairwise inequivalent embeddings of $X$ in $\mathbb{R}^2$, where two embeddings of $X$ in $\mathbb{R}^2$ are \emph{equivalent} if one can be obtained from the other by composing it with a self-homeomorphism of the plane.  This answers a question of Mayer \cite{mayer1982} from 1982.

\subsection{Recent work}
\label{sec:prev work}

This paper builds on work done in \cite{ammerlaan-anusic-hoehn2023} and \cite{ammerlaan-anusic-hoehn2024}.  See Section~\ref{sec:rad deps} below for a detailed review of the concepts and results from those works which we will use in this paper.  Here we give a brief summary of what is done in those papers.

In \cite{ammerlaan-anusic-hoehn2023}, we introduce the notion of a radial departure of an interval map.  Radial departures are used to describe, in a convenient way, a ``zig-zag'' pattern in the graph of an interval map, which, when present in the bonding maps of an inverse system $\langle [-1,1],f_i \rangle$ (see Section~\ref{sec:prelim} for definitions), entails an obstruction to embedding the inverse limit $X = \varprojlim \langle [-1,1],f_i \rangle$ in $\mathbb{R}^2$ to make a given point $x \in X$ accessible.  We show (Proposition~\ref{prop:embed 0 accessible} below) that, in the absence of this ``zig-zag'' obstruction, such an embedding can be accomplished using the Anderson-Choquet Embedding Theorem \cite{anderson-choquet1959}.  We also introduce the (radial) contour factor of an interval map, which provides a means for factoring interval maps that we use to produce alternative inverse limit representations of a given arc-like continuum $X$.  We aim to produce, in this way, an inverse limit representation of $X$ satisfying the conditions of our embedding result Proposition~\ref{prop:embed 0 accessible}.

Building on the work of \cite{ammerlaan-anusic-hoehn2023}, we further develop the theory of radial contour factorizations in \cite{ammerlaan-anusic-hoehn2024}, culminating in the main technical result of that paper, which we recall as Lemma~\ref{lem:bridged s} below.  This Lemma affords, under certain coherence conditions on the radial contour factors of bonding maps and their compositions, a decomposition of the (compositions of) bonding maps leading to an inverse limit representation satisfying the conditions of Proposition~\ref{prop:embed 0 accessible}.  As an application, we give an affirmative answer to the Nadler-Quinn problem for arc-like continua which are inverse limits of \emph{simplicial} inverse systems, i.e.\ inverse systems $\langle [-1,1],f_i \rangle$ for which there exist finite sets $\{-1,1\} \subseteq S_i \subset [-1,1]$, $i \in \mathbb{N}$, such that $f_i(S_{i+1}) \subseteq S_i$, and $f_i$ is linear on each component of $[-1,1] \smallsetminus S_{i+1}$, for each $i \in \mathbb{N}$.  In the present paper, we use Lemma~\ref{lem:bridged s} in a more delicate way, together with some approximation theory for inverse systems, to achieve the same result for an arbitrary arc-like continuum $X$.

\subsection{Overview of paper}
\label{sec:overview}

This paper is structured as follows.  After recalling some standard notions and notation in Section~\ref{sec:prelim}, we review some concepts and results from \cite{ammerlaan-anusic-hoehn2023} and \cite{ammerlaan-anusic-hoehn2024} about (radial) departures and (radial) contour factors in Section~\ref{sec:rad deps}, which will be relevant for the present paper.  In Section~\ref{sec:trunc}, we introduce the notion of a truncation of an interval map relative to a finite set, which produces approximations to interval maps, with limitations on the values of their (radial) contour points.  We further develop the theory of truncations in Section~\ref{sec:mesh trunc}, and establish some technical results which will be needed for the construction in the proof of the main theorem.  In Section~\ref{sec:approx bonding maps} we state a special case of a result of Mioduszewski, which provides a tool for giving alternative inverse limit representations of arc-like continua.  Then, we turn to the proof of Theorem~\ref{thm:main} in Section~\ref{sec:main proof}.  Finally, in Section~\ref{sec:discussion}, we state some corollaries of Theorem~\ref{thm:main}, and pose some questions.

\subsection{Preliminaries}
\label{sec:prelim}

By a \emph{map} we mean a continuous function.  A map $f \colon [-1,1] \to [-1,1]$ is called \emph{piecewise-linear} if there exists $n \in \mathbb{N}$ and $-1 = c_0 < c_1 < \cdots < c_n = 1$ such that $f$ is linear on $[c_{i-1},c_{i}]$ for each $i \in \{1,\ldots,n\}$. If, additionally, $f$ is non-constant on each $[c_{i-1},c_i]$, we say that it is \emph{nowhere constant}.

A \emph{continuum} is a non-empty, compact, connected, metric space.  A continuum is \emph{degenerate} if it contains only one point, and \emph{non-degenerate} otherwise.  For a sequence $X_i$, $i \geq 1$, of continua, and a sequence $f_i \colon X_{i+1} \to X_{i}$, $i \geq 1$, of maps, the sequence
\[ \langle X_i,f_i \rangle_{i \geq 1} = \langle X_1,f_1,X_2,f_2,X_3,\ldots \rangle \]
is called an \emph{inverse system}.  The spaces $X_i$ are called \emph{factor spaces}, and the maps $f_i$ are called \emph{bonding maps}.  The \emph{inverse limit} of an inverse system $\langle X_i,f_i \rangle_{i \geq 1}$ is the space
\[ \varprojlim \left \langle X_i,f_i \right \rangle_{i \geq 1} = \{\langle x_1,x_2,\ldots \rangle: f_i(x_{i+1}) = x_i \textrm{ for each } i \geq 1\} \subseteq \prod_{i \geq 1} X_i ,\]
equipped with the subspace topology inherited from the product topology on $\prod_{i \geq 1} X_i$.  It is also a continuum.  When the index set is clear, we will use a shortened notation $\varprojlim \left \langle X_i,f_i \right \rangle$.  Note that the inverse limit representation of a continuum is not unique, for example:
\begin{itemize}
\item (Dropping finitely many coordinates) If $n_0 \geq 1$, then $\varprojlim \left \langle X_i,f_i \right \rangle_{i \geq 1} \approx \varprojlim \left \langle X_i,f_i \right \rangle_{i \geq n_0}$.
\item (Composing bonding maps) If $1 = i_1 < i_2 < i_3 < \ldots$ is an increasing sequence of integers, then $\varprojlim \left \langle X_i,f_i \right \rangle_{i \geq 1} \approx \varprojlim \left \langle X_{i_k},f_{i_k} \circ \cdots \circ f_{i_{k+1}-1} \right \rangle_{k \geq 1}$.
\end{itemize}
More generally, a result of Mioduszewski \cite{mioduszewski1963} gives necessary and sufficient conditions for two inverse limits (of polyhedra) to be homeomorphic.  We review this result in Section~\ref{sec:approx bonding maps} below, and state a special case of it for our use in the present paper (Theorem~\ref{thm:mioduszewski}).

A continuum $X$ is \emph{arc-like} if $X \approx \varprojlim \left \langle [-1,1],f_i \right \rangle$ for some sequence of maps $f_i \colon [-1,1] \to [-1,1]$, $i \in \mathbb{N}$.  It is well-known (see e.g.\ \cite{brown1960}) that an arc-like continuum is homeomorphic to the inverse limit of an inverse system $\varprojlim \left \langle [-1,1],f_i \right \rangle$ for which the bonding maps $f_i$ are nowhere constant piecewise-linear maps.  Arc-like continua are also commonly called \emph{chainable} continua, since they can be characterized as continua which admit chain covers of arbitrary small mesh.  A \emph{chain cover} of $X$ is a finite open cover $\mathcal{C} = \langle U_1,\ldots, U_n \rangle$ of $X$ indexed so that $U_i \cap U_j \neq \emptyset$ if and only if $|i-j| \leq 1$, and the \emph{mesh} of $\mathcal{C}$ is $\max\{\diam(U_i): 1 \leq i \leq n\}$.

\section{Radial departures and radial contour factors}
\label{sec:rad deps}

In this section, we recall the definitions of radial departures and radial contour factorizations from \cite{ammerlaan-anusic-hoehn2023}, and some of the results about them from \cite{ammerlaan-anusic-hoehn2023} and \cite{ammerlaan-anusic-hoehn2024} that we will make use of in this paper.  We use letters for these results, to distinguish them from the new results contained in this paper, which are labelled with numbers.  Some of the concepts in this section will pertain to functions defined on $[-1,1]$, and some will be for functions defined on $[0,1]$.  We begin with radial departures.

\begin{defn*}[Definition~4.2 of \cite{ammerlaan-anusic-hoehn2023}]
Let $f \colon [-1,1] \to [-1,1]$ be a map with $f(0) = 0$.  A \emph{radial departure} of $f$ is a pair $\langle x_1,x_2 \rangle$ such that $-1 \leq x_1 < 0 < x_2 \leq 1$ and either:
\begin{enumerate}[label=(\arabic{*})]
\item \label{pos dep} $f((x_1,x_2)) = (f(x_1),f(x_2))$; or
\item \label{neg dep} $f((x_1,x_2)) = (f(x_2),f(x_1))$.
\end{enumerate}
We say a radial departure $\langle x_1,x_2 \rangle$ of $f$ is \emph{positively oriented} (or a \emph{positive radial departure}) if \ref{pos dep} holds, and $\langle x_1,x_2 \rangle$ is \emph{negatively oriented} (or a \emph{negative radial departure}) if \ref{neg dep} holds.
\end{defn*}

See Figure~\ref{fig:prev work} (third picture) for an illustration of a map with two radial departures shown.

\begin{figure}
\begin{center}
\includegraphics{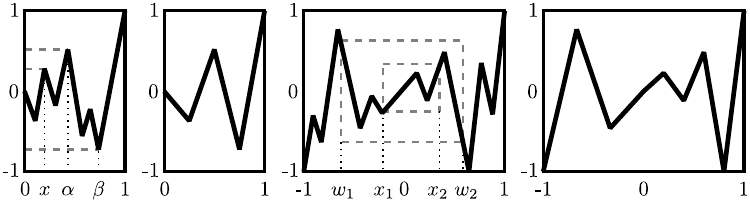}
\end{center}

\caption{Examples to illustrate the core concepts from Section~\ref{sec:rad deps}.  First picture: a map $f \colon [0,1] \to [-1,1]$, with a positive departure $x$, a positive contour point $\alpha$, and a negative contour point $\beta$ displayed.  Second picture: the contour factor of $f$.  Third picture: a map $g \colon [-1,1] \to [-1,1]$ with a positive radial departure $\langle x_1,x_2 \rangle$ and a negative radial departure $\langle w_1,w_2 \rangle$ displayed.  Fourth picture: the radial contour factor of $g$.}
\label{fig:prev work}
\end{figure}

When a map has both positive and negative radial departures, as with the map depicted in Figure~\ref{fig:prev work} (third picture), it implies the presence of a ``zig-zag'' pattern in the graph of the map centered around $x=0$.  The next Proposition states that if, in an inverse system $\langle [-1,1],f_i \rangle$ with $f_i(0) = 0$ for each $i \in \mathbb{N}$, none of the maps $f_i$ has such a ``zig-zag'' shape in its graph, then the inverse limit $X = \varprojlim \langle [-1,1],f_i \rangle$ can be embedded in $\mathbb{R}^2$ in such a way as to make the point $\langle 0,0,\ldots \rangle \in X$ accessible.  This is accomplished using the Anderson-Choquet Embedding Theorem \cite{anderson-choquet1959}.  The key idea is, roughly, that if all radial departures of a map have the same orientation, then the left half of its graph can be ``swung around'' to the right side, preserving (approximately) the $y$-values at all points and without causing any self-intersections, thus exposing the point $0$ from the left.

\begin{propalph}[Proposition~5.2 of \cite{ammerlaan-anusic-hoehn2023}]
\label{prop:embed 0 accessible}
Let $f_i \colon [-1,1] \to [-1,1]$, $i \in \mathbb{N}$, be maps with $f_i(0) = 0$ for each $n$.  Suppose that for each $i$, all radial departures of $f_i$ have the same orientation.  Then there exists an embedding of $X = \varprojlim \left \langle [-1,1],f_i \right \rangle$ into $\mathbb{R}^2$ for which the point $\langle 0,0,\ldots \rangle \in X$ is accessible.
\end{propalph}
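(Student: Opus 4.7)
The plan is to apply the Anderson-Choquet Embedding Theorem, which (in the relevant form) requires constructing, for each $i$, a planar embedding $\psi_i \colon [-1,1] \to \mathbb{R}^2$ such that $\psi_{i+1}$ is uniformly close to $\psi_i \circ f_i$ with tolerances $\epsilon_i \to 0$ fast enough. The induced embedding of $X$ then sits inside the intersection of the nested thickened images. To guarantee $\langle 0, 0, \ldots \rangle$ is accessible, I would arrange that at every level $i$, the point $\psi_i(0)$ lies on the outer boundary of the thin disk containing $\psi_i([-1,1])$, at one of the two chain-ends; this ``outer endpoint'' property then passes to the inverse limit.

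The central construction is a ``swing-around'' embedding of $[-1,1]$. Fix $i$, write $f = f_i$, and assume without loss of generality that every radial departure of $f$ is positively oriented (the negative case follows by a vertical reflection). The same-orientation hypothesis translates into the following combinatorial picture near $0$: one cannot find $x_1 < 0 < x_2$ with $f(x_1) > f(x_2)$ such that the interval $(x_1,x_2)$ witnesses monotone crossing of the graph in the ``wrong'' direction, for this would exhibit a negative radial departure. Hence, the graph of $f$ on $[-1,0]$ and on $[0,1]$ can be laid on opposite sides of a horizontal base line without crossing. I would construct $\psi \colon [-1,1] \to \mathbb{R}^2$ piecewise linearly by tracking $f$ in the vertical direction and encoding a chain cover horizontally, with the left half above the base line, the right half below it, and $\psi(0)$ placed at the extreme left tip of the image, on the outer boundary of the thickened arc.

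The hard part is making the swing-around coherent across levels. I would proceed inductively: choose a chain cover $\mathcal{C}_i$ of $[-1,1]$ of mesh smaller than the required Anderson-Choquet tolerance, with its first link containing $0$, and engineer $\psi_i$ to traverse $\mathcal{C}_i$ in order. At level $i+1$ I would refine $\mathcal{C}_{i+1}$ so that $f_i$ sends $\mathcal{C}_{i+1}$-links into $\mathcal{C}_i$-links, and then perform the swing-around at level $i+1$ so that $\psi_{i+1}([-1,1])$ fits inside a thin tubular neighborhood of $\psi_i \circ f_i([-1,1])$. The same-orientation hypothesis is indispensable precisely at this point: with mixed orientations, the level $i+1$ swing-around would be forced to cross itself when squeezed inside the $i$-th tubular neighborhood, breaking planarity.

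Finally, with the errors $\epsilon_i$ driven to zero sufficiently fast, the Anderson-Choquet theorem produces a topological embedding of $X = \varprojlim \left\langle [-1,1], f_i \right\rangle$ into $\mathbb{R}^2$. Since $\psi_i(0)$ lies on the outer boundary of the $i$-th tubular image for every $i$, the complements of the nested tubes admit a path approaching $\langle 0, 0, \ldots \rangle$ from the outside, which witnesses the accessibility of this point in the resulting embedding.
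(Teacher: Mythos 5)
Your high-level strategy (Anderson-Choquet plus a ``swing-around'' at each level that keeps $\psi_i(0)$ exposed) is exactly what the paper describes for this proposition, which it cites from \cite{ammerlaan-anusic-hoehn2023} rather than re-proving. In that sense your approach matches the paper's in outline.

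But the argument has a genuine gap precisely where the hypothesis is supposed to do its work. You assert, without argument, that if all radial departures of $f_i$ have the same orientation then the embedding $\psi_{i+1}$ of $[-1,1]$ inside the thin tube around $\psi_i([-1,1])$ can be laid down without self-crossing and without enclosing $\psi_i(0)$, and that mixed orientations would force such an enclosure; that winding/enclosure analysis is the entire content of the proposition and is missing. Your geometric set-up is also internally inconsistent: if the vertical coordinate ``tracks $f$'', the image $\psi([-1,0])$ cannot sit entirely above a fixed base line unless $f \geq 0$ on $[-1,0]$, which is not assumed --- whereas the paper's fold preserves $y$-values and swings the left half of the graph horizontally past $0$ to the right, a different picture. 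Finally, your combinatorial paraphrase of ``no negative radial departure'' (some $f(x_1) > f(x_2)$ with a ``monotone crossing'') is not the definition: a negative radial departure $\langle x_1, x_2 \rangle$ requires $f((x_1,x_2)) = (f(x_2),f(x_1))$ exactly, which in particular forces $f(x_1) > 0 > f(x_2)$, and it is this precise structure that must enter the crossing argument before the proof can close.
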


The next Proposition gives a characterization of radial departures of a composition of two maps in terms of the radial departures of the individual maps.

\begin{propalph}[Proposition~4.6 of \cite{ammerlaan-anusic-hoehn2023}]
\label{prop:comp dep}
Let $f,g \colon [-1,1] \to [-1,1]$ be maps with $f(0) = g(0) = 0$, and let $x_1,x_2$ be such that $-1 \leq x_1 < 0 < x_2 \leq 1$.  Then $\langle x_1,x_2 \rangle$ is a positive (respectively, negative) radial departure of $f \circ g$ if and only if either:
\begin{enumerate}
\item $\langle x_1,x_2 \rangle$ is a positive radial departure of $g$ and $\langle g(x_1),g(x_2) \rangle$ is a positive (respectively, negative) radial departure of $f$; or
\item $\langle x_1,x_2 \rangle$ is a negative radial departure of $g$ and $\langle g(x_2),g(x_1) \rangle$ is a negative (respectively, positive) radial departure of $f$.
\end{enumerate}
\end{propalph}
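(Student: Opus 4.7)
The plan is to prove the two directions separately. The reverse direction is essentially a direct computation: if, for example, $\langle x_1, x_2 \rangle$ is a positive radial departure of $g$, then $g((x_1, x_2)) = (g(x_1), g(x_2))$ by definition, and if in addition $\langle g(x_1), g(x_2) \rangle$ is a positive radial departure of $f$, then applying $f$ gives $(f \circ g)((x_1, x_2)) = f((g(x_1), g(x_2))) = (f(g(x_1)), f(g(x_2)))$, a positive radial departure of $f \circ g$. The other three sign combinations are handled identically, taking care that when $g$ has a negative radial departure the relevant pair fed into $f$ is $\langle g(x_2), g(x_1) \rangle$, since then $g(x_2) < 0 < g(x_1)$.

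For the forward direction, I assume $\langle x_1, x_2 \rangle$ is, say, a positive radial departure of $f \circ g$ and aim to show that $\langle x_1, x_2 \rangle$ is already a radial departure of $g$. The defining set equality $(f \circ g)((x_1, x_2)) = ((f \circ g)(x_1), (f \circ g)(x_2))$ implies $(f \circ g)(t) \neq (f \circ g)(x_j)$ for all $t \in (x_1, x_2)$ and $j \in \{1, 2\}$, hence $g(t) \notin \{g(x_1), g(x_2)\}$ throughout $(x_1, x_2)$. Next I invoke the Intermediate Value Theorem on $g|_{[x_1, x_2]}$ to show that $g(x_1)$ and $g(x_2)$ must be the extreme values of $g$ on this interval: if some $g(x_j)$ lay strictly between $\min g$ and $\max g$, then $g$ would attain values both above and below $g(x_j)$ somewhere in $(x_1, x_2)$, and IVT would produce an interior point where $g(t) = g(x_j)$, a contradiction. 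Since $g(x_1) \neq g(x_2)$ (their $f$-images being the distinct extremes of $f \circ g$ on $[x_1, x_2]$), one is the minimum and the other the maximum, so either $g((x_1, x_2)) = (g(x_1), g(x_2))$ or $g((x_1, x_2)) = (g(x_2), g(x_1))$. A direct substitution into $(f \circ g)((x_1, x_2))$ then determines the orientation of the corresponding radial departure of $f$, matching the dichotomy in the proposition; the case of a negative radial departure of $f \circ g$ is symmetric.

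I expect the main obstacle to be the IVT step placing $g(x_1)$ and $g(x_2)$ at the extremes of $g$ on $[x_1, x_2]$; once this is established, the remainder reduces to careful bookkeeping over the four sign combinations.
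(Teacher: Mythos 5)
This proposition is quoted from \cite{ammerlaan-anusic-hoehn2023} and not proved in the present paper, so there is no in-paper proof to compare against. Your argument is correct, and the decomposition into the easy direct computation (reverse direction) and the key step (forward direction: a radial departure of $f\circ g$ is already a radial departure of $g$) is the natural one. One small streamlining of the forward direction that avoids the slightly awkward casework in your IVT step (extremes of $g$ on $[x_1,x_2]$ possibly being attained at an endpoint $x_1$ or $x_2$): the set $g((x_1,x_2))$ is an interval containing $0 = g(0)$; the points $g(x_1)$ and $g(x_2)$ lie in its closure by continuity but not in the interval itself, because $g(t)\in\{g(x_1),g(x_2)\}$ for some interior $t$ would force $(f\circ g)(t)\in\{(f\circ g)(x_1),(f\circ g)(x_2)\}$, contradicting the defining equality for $f\circ g$; and $g(x_1)\neq g(x_2)$ since their $f$-images differ. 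Two distinct points of $\overline{I}\smallsetminus I$ for an interval $I$ must be its endpoints, so $g((x_1,x_2))$ is precisely the open interval with endpoints $g(x_1)$ and $g(x_2)$, i.e.\ $\langle x_1,x_2\rangle$ is a radial departure of $g$ of one orientation or the other, after which the orientation bookkeeping proceeds exactly as you describe.
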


The second key concept from \cite{ammerlaan-anusic-hoehn2023} we will need is the (radial) contour factor of an interval map.  With the radial contour factor $t_f$ of a map $f \colon [-1,1] \to [-1,1]$ with $f(0) = 0$, we are able to factor $f$ as a composition $f = t_f \circ s$ for some map $s \colon [-1,1] \to [-1,1]$.  This provides us with a tool for constructing, by factoring (compositions of) bonding maps, alternative inverse limit representations of a given arc-like continuum.  In particular, this will help us construct one which satisfies the conditions of Proposition~\ref{prop:embed 0 accessible} above.

\begin{defn*}[Definition~3.1 of \cite{ammerlaan-anusic-hoehn2023}]
Let $f \colon [0,1] \to [-1,1]$ be a map with $f(0) = 0$.  A \emph{departure} of $f$ is a number $x > 0$ such that $f(x) \notin f([0,x))$.  We say a departure $x$ of $f$ is \emph{positively oriented} (or a \emph{positive departure}) if $f(x) > 0$, and $x$ is \emph{negatively oriented} (or a \emph{negative departure}) if $f(x) < 0$.

A \emph{contour point} of $f$ is a departure $\alpha$ such that for any departure $x$ of $f$ with $x > \alpha$, there exists a departure $y$ of $f$, of orientation opposite to that of $\alpha$, such that $\alpha < y \leq x$.
\end{defn*}

For convenience in statements and arguments below, we may also include $\alpha = 0$ as a contour point of $f$.

\begin{defn*}[Definition~3.2 of \cite{ammerlaan-anusic-hoehn2023}]
Let $f \colon [0,1] \to [-1,1]$ be a non-constant piecewise-linear map with $f(0) = 0$.  The \emph{contour factor} of $f$ is the piecewise-linear map $t_f \colon [0,1] \to [-1,1]$ defined as follows:

Let $0 = \alpha_0 < \alpha_1 < \alpha_2 < \cdots < \alpha_n$ be the contour points of $f$.  Then $t_f \left( \frac{i}{n} \right) = f(\alpha_i)$ for each $i = 0,\ldots,n$, and $t_f$ is defined to be linear in between these points.
\end{defn*}

See Figure~\ref{fig:prev work} (first picture) for an illustration of a map with some departures and contour points shown, and (second picture) the contour factor of that map.

We remark that if $i \in \{0,\ldots,n-1\}$, then $f([0,\alpha_{i+1}]) = f([\alpha_i,\alpha_{i+1}])$.  In fact, if $\alpha_{i+1}$ is a positive contour point (so that $\alpha_i$ is a negative contour point or $0$), then $f([0,\alpha_{i+1})) = [f(\alpha_i),f(\alpha_{i+1}))$, and similarly if $\alpha_{i+1}$ is a negative contour point (so that $\alpha_i$ is a positive contour point or $0$), then $f([0,\alpha_{i+1})) = (f(\alpha_{i+1}),f(\alpha_i)]$.

\begin{lemalph}[Lemma~3.4 from \cite{ammerlaan-anusic-hoehn2023}]
\label{lem:same contour}
Let $f,g \colon [0,1] \to [-1,1]$ be non-constant piecewise-linear maps with $f(0) = g(0) = 0$.  Then $t_f = t_g$ if and only if:
\begin{enumerate}
\item for each departure $x$ of $f$, there exists a departure $x'$ of $g$ such that $f([0,x)) = g([0,x'))$; and
\item for each departure $x'$ of $g$, there exists a departure $x$ of $f$ such that $f([0,x)) = g([0,x'))$.
\end{enumerate}
\end{lemalph}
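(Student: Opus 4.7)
The plan is to prove both directions by carefully cataloguing where departures lie relative to the contour points $0 = \alpha_0 < \alpha_1 < \cdots < \alpha_n$ of $f$ and $0 = \beta_0 < \cdots < \beta_m$ of $g$. First I would establish several structural facts: contour points alternate in orientation starting from $\alpha_0 = 0$; positive contour values form a strictly increasing sequence while negative contour values form a strictly decreasing sequence; and every departure $x$ of $f$ lies in a unique stroke $(\alpha_{i-1}, \alpha_i]$ and shares the orientation of $\alpha_i$. Consequently, for a positive departure $x$ in $(\alpha_{i-1}, \alpha_i]$, one has $f([0, x)) = [f(\alpha_{i-1}), f(x))$ with $f(x)$ ranging over $(M_i, f(\alpha_i)]$, where $M_i$ denotes the largest positive contour value preceding $\alpha_i$ (or $0$ if $i = 1$); symmetric formulas hold for negative departures.

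For the forward direction, assume $t_f = t_g$, so $n = m$ and $f(\alpha_i) = g(\beta_i)$ for every $i$. Given a positive departure $x$ of $f$ in $(\alpha_{i-1}, \alpha_i]$, set $a = f(x)$ and, using continuity together with the intermediate value theorem on $[\beta_{i-1}, \beta_i]$, define $x'$ to be the first point there with $g(x') = a$. Minimality forces $g < a$ on $[\beta_{i-1}, x')$; combined with $g([0, \beta_{i-1}]) \subseteq [g(\beta_{i-1}), M_i]$ and $M_i < a$, this gives $g([0, x')) = [g(\beta_{i-1}), a) = f([0, x))$, and $x'$ is itself a positive departure since $g(x') = a \notin g([0, x'))$. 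This establishes (1); condition (2) follows by the same argument with the roles of $f$ and $g$ interchanged, and the negative-departure sub-cases are analogous.

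For the reverse direction I would argue by contrapositive. If $t_f \neq t_g$, then either $n \neq m$, or there is a smallest index $k$ with $f(\alpha_k) \neq g(\beta_k)$. In the latter case, without loss of generality $\alpha_k$ is positive with $f(\alpha_k) < g(\beta_k)$; the departure $\beta_k$ of $g$ then yields $g([0, \beta_k)) = [g(\beta_{k-1}), g(\beta_k))$, and any matching $f$-departure $x$ would satisfy $f([0, x)) = [f(\alpha_{k-1}), g(\beta_k))$. The strict monotonicity of the negative contour values forces the stroke containing $x$ to be $(\alpha_{k-1}, \alpha_k]$, in which $f$-values of positive departures are bounded by $f(\alpha_k) < g(\beta_k)$ --- a contradiction, so (2) fails. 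The case $n \neq m$ is handled analogously using the departure at the final ``extra'' contour point on the longer side. The main obstacle I anticipate is the combinatorial bookkeeping --- tracking the alternating signs of contour points, separating the base case $i = 1$ (where $M_i = 0$), and handling the $n \neq m$ sub-case carefully --- but strict monotonicity of the two contour-value sequences is exactly the structural fact that pins down the matching stroke uniquely and makes each matching step go through.
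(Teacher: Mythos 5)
The paper does not actually reprove this statement; it is recalled as Lemma~3.4 of the prior paper \cite{ammerlaan-anusic-hoehn2023} (the ``lettered'' numbering is the authors' signal that the result is imported, not established here). So there is no proof in \emph{this} paper to compare against. What the paper proves itself is the variant Lemma~\ref{lem:same contour 2}, whose proof proceeds by reducing to the present Lemma. With that caveat, I can still assess your argument on its own terms.

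Your approach---decompose $[0,1]$ into the ``strokes'' $(\alpha_{i-1},\alpha_i]$, observe that every departure lives in exactly one stroke and inherits the orientation of its right endpoint, record that $f([0,x))$ is the half-open interval $[f(\alpha_{i-1}),f(x))$ (positive case) or $(f(x),f(\alpha_{i-1})]$ (negative case), and then match strokes across $f$ and $g$ by using the strict monotonicity of the positive and negative contour-value sequences to pin down the index---is a correct direct combinatorial proof. The forward-direction IVT construction of $x'$ and the verification that $x'$ is a departure are sound, and the reverse direction does fail exactly at the first index of disagreement, as you say. Two small points worth tightening. First, the dichotomy ``either $n\ne m$ or there is a smallest $k$ with $f(\alpha_k)\ne g(\beta_k)$'' should be stated as: let $k$ be minimal with $k>\min(n,m)$ or $f(\alpha_k)\ne g(\beta_k)$; otherwise the two cases overlap in a way that muddies the WLOG. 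Second, in the reverse direction you should note explicitly that a negative departure of $f$ produces an interval $(f(x),f(\alpha_{i-1})]$, which is left-open/right-closed, hence can never coincide with the left-closed/right-open interval $g([0,\beta_k))$ produced by a positive departure $\beta_k$; this is what rules out cross-orientation matches, and it is implicit but never said in your write-up. Neither point is a genuine gap, just bookkeeping that deserves to be surfaced.
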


We add the following variant of Lemma~\ref{lem:same contour}, in which the conditions (1) and (2) are weakened slightly.  As such, this variant can be easier to use to establish that two maps have the same contour factor.

\begin{lem}
\label{lem:same contour 2}
Let $f,g \colon [0,1] \to [-1,1]$ be non-constant piecewise-linear maps with $f(0) = g(0) = 0$.  Then $t_f = t_g$ if and only if:
\begin{enumerate}
\item for each contour point $\alpha$ of $f$, there exists a departure $x'$ of $g$ such that $f([0,\alpha)) = g([0,x'))$; and
\item for each contour point $\alpha'$ of $g$, there exists a departure $x$ of $f$ such that $f([0,x)) = g([0,\alpha'))$.
\end{enumerate}
\end{lem}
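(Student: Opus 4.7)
The forward direction is immediate: if $t_f = t_g$, then by Lemma~\ref{lem:same contour} the corresponding conditions hold for all departures, and since every nonzero contour point is in particular a departure, restricting those conditions to contour points gives (1) and (2) here.

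For the converse, my plan is to deduce the full conditions (1) and (2) of Lemma~\ref{lem:same contour} from the weaker hypotheses, and then invoke that lemma. By symmetry, it suffices to show: for every departure $x$ of $f$, there is a departure $x'$ of $g$ with $f([0,x)) = g([0,x'))$. The case $x = \alpha_k$ (a contour point) follows from hypothesis (1); so assume $x$ is not a contour point. A preliminary infinite-descent argument (using the contour-point property of $\alpha_n$ and piecewise-linearity of $f$) shows that every departure of $f$ satisfies $x \leq \alpha_n$, so one may pick $k$ with $\alpha_{k-1} < x < \alpha_k$.

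The central step is to pin down $f([0,x))$ using the remark after the definition of the contour factor. Assume $\alpha_k$ is positive (the negative case being symmetric), so $f([0,\alpha_k)) = [f(\alpha_{k-1}), f(\alpha_k))$. Since $[0,\alpha_{k-1}] \subseteq [0,x) \subseteq [0,\alpha_k)$, one has $f([0,\alpha_{k-1}]) \subseteq f([0,x)) \subseteq [f(\alpha_{k-1}), f(\alpha_k))$, and the departure condition $f(x) \notin f([0,x))$ forces $f(x) \in (f(\alpha_{k-2}), f(\alpha_k))$ (with the convention $f(\alpha_{-1}) := 0$ when $k = 1$). A routine continuity argument, exploiting that $f$ does not attain $f(x)$ on $[0,x)$, then yields $f([0,x)) = [f(\alpha_{k-1}), f(x))$.

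Applying hypothesis (1) to $\alpha_{k-1}$ and $\alpha_k$ (setting $x'_0 := 0$ when $k = 1$) produces $g$-departures $x'_{k-1} < x'_k$ with $g([0,x'_j)) = f([0,\alpha_j))$ for $j \in \{k-1,k\}$, and a short connectedness argument on the closed interval $g([0,x'_j])$ forces $g(x'_j) = f(\alpha_j)$. Since $g$ on $[0,x'_{k-1}]$ takes values in $[f(\alpha_{k-1}), f(\alpha_{k-2})]$, which excludes $f(x)$, the intermediate value theorem applied to $g$ on $[x'_{k-1}, x'_k]$ supplies a least $x' \in (x'_{k-1}, x'_k]$ with $g(x') = f(x)$. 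The minimality of $x'$, continuity of $g$, and the confinement $g([0,x'_k)) = [f(\alpha_{k-1}), f(\alpha_k))$ then combine to give $g([0,x')) = [f(\alpha_{k-1}), f(x)) = f([0,x))$, with $x'$ a departure of $g$ since $g(x') = f(x) \notin g([0,x'))$. The hardest part will be packaging cleanly the two appeals to the ``shape'' of $f$ and $g$ on initial segments (the remark, plus the structure of non-contour-point departures), which enter both the derivation of $f([0,x))$ and the location of $x'$.
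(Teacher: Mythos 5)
Your proof is correct and follows essentially the same route as the paper's: handle contour points of $f$ directly from hypothesis (1), and for a non-contour departure $x \in (\alpha_{k-1},\alpha_k)$ identify $f([0,x)) = [f(\alpha_{k-1}), f(x))$, pull back $\alpha_{k-1}$ and $\alpha_k$ to $g$-departures $x'_{k-1}, x'_k$ via (1), and use the intermediate value theorem to locate the first $x' \in (x'_{k-1}, x'_k]$ with $g(x') = f(x)$, verifying that $g([0,x')) = f([0,x))$. You spell out a few steps the paper leaves implicit (that all departures are $\leq \alpha_n$, and that $g(x'_j) = f(\alpha_j)$ by connectedness of $g([0,x'_j])$), but the underlying argument is the same one, and the paper then invokes symmetry for condition (2) and concludes by Lemma~\ref{lem:same contour} as you do.
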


\begin{proof}
The forward implication follows from Lemma~\ref{lem:same contour}.  For the converse, suppose that (1) and (2) of the present Lemma hold.  We prove that (1) of Lemma~\ref{lem:same contour} follows.

Let $x$ be a departure of $f$, and suppose $f(x) > 0$ (the case where $f(x) < 0$ is similar).  Let $0 = \alpha_0 < \alpha_1 < \alpha_2 < \cdots < \alpha_n$ be the contour points of $f$, and let $i \in \{0,\ldots,n-1\}$ be such that $x \in (\alpha_i,\alpha_{i+1}]$.  By assumption (1), there exist departures $x_i',x_{i+1}'$ of $g$ such that $g([0,x_i')) = f([0,\alpha_i))$ and $g([0,x_{i+1}')) = f([0,\alpha_{i+1}))$.  Now $f(x) \notin f([0,\alpha_i)) = g([0,x_i'))$ but $f(x) \in f([0,\alpha_{i+1})) = g([0,x_{i+1}'))$, so there must be a departure $x' \in (x_i',x_{i+1}']$ of $g$ such that $g(x') = f(x)$.  Note that $f([0,x)) = [f(\alpha_i),f(x)) \subseteq g([0,x'))$.  Furthermore, $g(y) < g(x') = f(x)$ for all $y \in [0,x')$, and $g([0,x')) \subseteq g([0,x_{i+1}')) = f([0,\alpha_{i+1})) = [f(\alpha_i),f(\alpha_{i+1}))$, meaning that $g(y) \geq f(\alpha_i)$ for all $y \in [0,x')$.  It follows that $g([0,x')) \subseteq f([0,x))$.  Therefore, $f([0,x)) = g([0,x'))$.

Similarly, we can prove that (2) of Lemma~\ref{lem:same contour} holds.  Therefore, by Lemma~\ref{lem:same contour}, $t_f = t_g$.
\end{proof}

For the next two Definitions, let $\mathsf{r} \colon [0,1] \to [-1,0]$ be the function $\mathsf{r}(x) = -x$

\begin{defn*}[Definition~4.1 of \cite{ammerlaan-anusic-hoehn2023}]
Let $f \colon [-1,1] \to [-1,1]$ be a map with $f(0) = 0$.
\begin{itemize}
\item A \emph{right departure} of $f$ is a number $x > 0$ such that $x$ is a departure of $f {\restriction}_{[0,1]}$.
\item A \emph{left departure} of $f$ is a number $x < 0$ such that $-x$ is a departure of $f {\restriction}_{[-1,0]} \circ \mathsf{r}$; i.e.\ a number $x < 0$ such that $f(x) \notin f((x,0])$.
\end{itemize}
If $x$ is either a right departure or a left departure of $f$, then we say $x$ is \emph{positively oriented} if $f(x) > 0$, and $x$ is \emph{negatively oriented} if $f(x) < 0$.
\begin{itemize}
\item A \emph{right contour point} of $f$ is a number $\alpha > 0$ such that $\alpha$ is a contour point of $f {\restriction}_{[0,1]}$.
\item A \emph{left contour point} of $f$ is a number $\beta < 0$ such that $-\beta$ is a contour point of $f {\restriction}_{[-1,0]} \circ \mathsf{r}$.
\end{itemize}
\end{defn*}

\begin{defn*}[Definition~6.1 of \cite{ammerlaan-anusic-hoehn2023}]
Let $f \colon [-1,1] \to [-1,1]$ be a piecewise-linear map with $f(0) = 0$, and such that both restrictions $f {\restriction}_{[0,1]}$ and $f {\restriction}_{[-1,0]}$ are non-constant.  The \emph{radial contour factor} of $f$ is the piecewise-linear map $t_f \colon [-1,1] \to [-1,1]$ such that:
\begin{enumerate}
\item $t_f {\restriction}_{[0,1]}$ is the contour factor of $f {\restriction}_{[0,1]}$; and
\item $t_f {\restriction}_{[-1,0]} \circ \mathsf{r}$ is the contour factor of $f {\restriction}_{[-1,0]} \circ \mathsf{r}$.
\end{enumerate}
\end{defn*}

See Figure~\ref{fig:prev work} (fourth picture) for an illustration of the radial contour factor of a map.

For brevity, we will say that a map $f \colon [-1,1] \to [-1,1]$ \emph{has well-defined radial contour factor} if $f$ is piecewise-linear, $f(0) = 0$, and both restrictions $f {\restriction}_{[0,1]}$ and $f {\restriction}_{[-1,0]}$ are non-constant.

\begin{defn*}[Definition~6.3 from \cite{ammerlaan-anusic-hoehn2023}]
Let $f,g \colon [-1,1] \to [-1,1]$ be maps with $f(0) = g(0) = 0$.  We say \emph{$f$ and $g$ have the same radial departures} if for any $y_1,y_2 \in [-1,1]$, there exists a radial departure $\langle x_1,x_2 \rangle$ of $f$ with $f(x_1) = y_1$ and $f(x_2) = y_2$ if and only if there exists a radial departure $\langle x_1',x_2' \rangle$ of $g$ with $g(x_1') = y_1$ and $g(x_2') = y_2$.
\end{defn*}

\begin{lemalph}[Lemma~6.4 from \cite{ammerlaan-anusic-hoehn2023}]
\label{lem:f tf same dep}
Let $f \colon [-1,1] \to [-1,1]$ be a map with well-defined radial contour factor $t_f$.  Then $f$ and $t_f$ have the same radial departures.
\end{lemalph}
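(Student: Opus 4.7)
The plan is to reduce the statement to Lemma~\ref{lem:same contour}, applied separately on the right half $f {\restriction}_{[0,1]}$ and on the reflected left half $f {\restriction}_{[-1,0]} \circ \mathsf{r}$. On each half, the contour factor of $t_f$ equals $t_f$ itself (its contour points are precisely the nodes $i/n$ defining the piecewise-linear interpolation), so $f$ and $t_f$ share the same contour factor. Lemma~\ref{lem:same contour} then supplies, for each right departure $x_2$ of $f$, a right departure $x_2'$ of $t_f$ with $f([0,x_2)) = t_f([0,x_2'))$, and vice versa; and analogously on the left, with image-set equality $f((x_1, 0]) = t_f((x_1', 0])$.

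First I would promote the image-set matching to an equality of values: given $f([0,x_2)) = t_f([0, x_2'))$ with $x_2, x_2'$ departures, I want $f(x_2) = t_f(x_2')$ and matching orientation. The image $f([0, x_2))$ is a connected subset of $\mathbb{R}$, hence an interval; the closed case $[a,b]$ contradicts $f(x_2)$ being a departure (since continuity forces $f(x_2)$ into the closure of the image), and the open case $(a, b)$ is ruled out because $f([0, x_2])$ is a compact interval and $f(x_2)$ is single-valued (both endpoints cannot simultaneously equal $f(x_2)$). So the image is half-open, and its missing endpoint uniquely pins down $f(x_2)$; the same analysis applies to $t_f(x_2')$, yielding $f(x_2) = t_f(x_2')$ with matching sign. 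The analogous statement holds on the left.

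Next, suppose $\langle x_1, x_2 \rangle$ is a positive radial departure of $f$ with $f(x_1) = y_1$ and $f(x_2) = y_2$, so $y_1 < 0 < y_2$. I extract the right departure $x_2$ (positively oriented, value $y_2$) and the left departure $x_1$ (negatively oriented, value $y_1$), and feed each through the correspondence above to produce $x_2', x_1'$ with $t_f(x_2') = y_2$, $t_f(x_1') = y_1$, $t_f([0, x_2')) = f([0, x_2))$, and $t_f((x_1', 0]) = f((x_1, 0])$. From $f((x_1, x_2)) = (y_1, y_2)$ I read off that $f([0, x_2)) \subseteq (y_1, y_2)$ and $f((x_1, 0]) \subseteq (y_1, y_2)$, which transfer to $t_f((x_1', x_2')) \subseteq (y_1, y_2)$. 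Combined with the endpoint values and the intermediate value theorem, this forces $t_f((x_1', x_2')) = (y_1, y_2)$, so $\langle x_1', x_2' \rangle$ is a positive radial departure of $t_f$ with the prescribed image values. The negative-orientation case follows by an obvious sign swap, and the reverse direction is symmetric.

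The main obstacle I expect is the value-matching upgrade in the second paragraph, since Lemma~\ref{lem:same contour} promises only agreement of image sets; the interval-shape analysis is needed to extract the stronger conclusion that the departure values themselves coincide. Once this is in hand, the rest is routine unpacking of the radial-departure definition combined with the intermediate value theorem.
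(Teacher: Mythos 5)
Your proof is correct, and the route through Lemma~\ref{lem:same contour} applied to each half is the natural one given the toolkit; the lemma you are proving is stated in the paper only by citation to the prior work, but this is almost certainly the intended argument. The crucial observations are all in place: that the contour factor of $t_f{\restriction}_{[0,1]}$ is $t_f{\restriction}_{[0,1]}$ itself, so Lemma~\ref{lem:same contour} applies to the pair $(f{\restriction}_{[0,1]},\,t_f{\restriction}_{[0,1]})$; that a positive radial departure $\langle x_1,x_2\rangle$ yields a positively oriented right departure $x_2$ and a negatively oriented left departure $x_1$ (since $f([0,x_2))$ and $f((x_1,0])$ both sit inside $(y_1,y_2)$, which excludes $y_1$ and $y_2$); and that a departure's value is recovered as the missing endpoint of the half-open image interval, which is what upgrades Lemma~\ref{lem:same contour}'s image-set matching to value-and-orientation matching. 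One small simplification you could make at the end: once you have $t_f([0,x_2'))=f([0,x_2))$ and $t_f((x_1',0])=f((x_1,0])$, the identity $t_f((x_1',x_2'))=t_f((x_1',0])\cup t_f([0,x_2'))=f((x_1,0])\cup f([0,x_2))=f((x_1,x_2))=(y_1,y_2)$ gives the radial-departure condition outright, without needing the final appeal to the intermediate value theorem. Your fallback via IVT is also sound, just slightly roundabout. The negative-orientation case and the converse direction are indeed symmetric as you say.
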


We add one simple result which will be helpful later in the paper for proving that all radial departures of certain compositions of maps have the same orientation.

\begin{lem}
\label{lem:comp same dep}
Let $f,g_1,g_2 \colon [-1,1] \to [-1,1]$ be maps with $f(0) = g_1(0) = g_2(0) = 0$.  If $g_1$ and $g_2$ have the same radial departures, then $f \circ g_1$ and $f \circ g_2$ have the same radial departures.
\end{lem}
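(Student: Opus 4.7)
The plan is to derive the claim directly from Proposition~\ref{prop:comp dep}, which ``decomposes'' each radial departure of a composition $f \circ g$ into a radial departure of $g$ together with a radial departure of $f$ starting from the $g$-values. Fix $y_1, y_2 \in [-1,1]$; by the symmetry of the hypothesis in $g_1$ and $g_2$, I only need to prove one implication, namely that a radial departure $\langle x_1, x_2 \rangle$ of $f \circ g_1$ with $(f \circ g_1)(x_i) = y_i$ yields a radial departure of $f \circ g_2$ with the same $y$-values.

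Starting from such $\langle x_1, x_2 \rangle$, I set $z_i = g_1(x_i)$. Proposition~\ref{prop:comp dep} then presents $\langle x_1, x_2 \rangle$ as a radial departure of $g_1$ of some orientation $\varepsilon \in \{+,-\}$, together with the information that the pair $\langle z_1, z_2 \rangle$ (if $\varepsilon = +$) or $\langle z_2, z_1 \rangle$ (if $\varepsilon = -$) is a radial departure of $f$ whose $f$-values are $y_1, y_2$. The hypothesis that $g_1$ and $g_2$ have the same radial departures now supplies a radial departure $\langle x_1', x_2' \rangle$ of $g_2$ with $g_2(x_i') = z_i$. Since the orientation of a radial departure of $g_j$ is determined solely by the ordering of $g_j(x_1), g_j(x_2)$, and the common values $z_1, z_2$ fix that ordering, this new radial departure of $g_2$ automatically has the same orientation $\varepsilon$.

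To close the loop I apply Proposition~\ref{prop:comp dep} in the reverse direction to $f \circ g_2$: the radial departure $\langle x_1', x_2' \rangle$ of $g_2$ has orientation $\varepsilon$, and the corresponding pair of its $g_2$-values coincides with $\langle z_1, z_2 \rangle$ or $\langle z_2, z_1 \rangle$, which is already known to be a radial departure of $f$. The proposition then assembles these into a radial departure of $f \circ g_2$, whose $f \circ g_2$-values at $x_i'$ are $f(z_i) = y_i$, as required. The only step that calls for any verification is the orientation-matching, but this is immediate from the sign conventions in the definition, so I do not expect any real obstacle here; the argument is essentially a double application of Proposition~\ref{prop:comp dep} glued together by the hypothesis.
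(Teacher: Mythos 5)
Your proof is correct and takes essentially the same approach as the paper's: decompose the radial departure of $f \circ g_1$ via Proposition~\ref{prop:comp dep}, transfer the $g_1$-part to $g_2$ using the hypothesis, and reassemble via Proposition~\ref{prop:comp dep} in the other direction. You are in fact slightly more careful than the paper's terse write-up in explicitly verifying that the orientation of the transferred radial departure of $g_2$ matches (since $z_1,z_2$ straddle $0$ and the orientation is determined by their sign pattern), which is the one point the paper glosses over.
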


\begin{proof}
Suppose that $g_1$ and $g_2$ have the same radial departures.  We will prove that if $z_1,z_2 \in [-1,1]$ are such that there is a radial departure of $f \circ g_1$ mapping to $z_1,z_2$ under $f \circ g_1$, then there is a radial departure of $f \circ g_2$ mapping to $z_1,z_2$ under $f \circ g_2$.  Since $g_1$ and $g_2$ are interchangeable here, this will suffice to prove the Lemma.

Let $\langle x_1,x_2 \rangle$ be a radial departure of $f \circ g_1$, let $y_1 = g_1(x_1)$ and $y_2 = g_1(x_2)$, and let $z_1 = f(y_1) = f \circ g_1(x_1)$ and $z_2 = f(y_2) = f \circ g_1(x_2)$.  By Proposition~\ref{prop:comp dep}, $\langle x_1,x_2 \rangle$ is a radial departure of $g_1$ and $\langle y_1,y_2 \rangle$ is a radial departure of $f$.  Now, since $g_1$ and $g_2$ have the same radial departures, there exists a radial departure $\langle x_1',x_2' \rangle$ of $g_2$ such that $g_2(x_1') = y_1$ and $g_2(x_2') = y_2$.  By Proposition~\ref{prop:comp dep}, it follows that $\langle x_1',x_2' \rangle$ is a radial departure of $f \circ g_2$, and $f \circ g_2(x_1') = f(y_1) = z_1$ and $f \circ g_2(x_2') = f(y_2) = z_2$.
\end{proof}

The final result in this section, Lemma~\ref{lem:bridged s} below, is the main technical result from \cite{ammerlaan-anusic-hoehn2024}.  It states that, given three maps $f_1,f_2,f_3 \colon [-1,1] \to [-1,1]$, if the radial contour factors are ``stable'' under composition (that is, if the radial contour factors of $f_1$ and of $f_1 \circ f_2$ are the same, and the same goes for $f_2$ and $f_2 \circ f_3$), then by a careful choice of map $s$ in the factorization $f_1 \circ f_2 = t_{f_1} \circ s$, we may arrange that the composition $s \circ t_{f_3}$ has no negative radial departures.

\begin{lemalph}[Lemma~4.1 from \cite{ammerlaan-anusic-hoehn2024}]
\label{lem:bridged s}
Let $f_1,f_2,f_3 \colon [-1,1] \to [-1,1]$ be maps with well-defined radial contour factors.  Suppose that $t_{f_1} = t_{f_1 \circ f_2}$ and $t_{f_2} = t_{f_2 \circ f_3}$.  Then there exists $s \colon [-1,1] \to [-1,1]$ with $s(0) = 0$ such that:
\begin{enumerate}
\item $t_{f_1} \circ s = f_1 \circ f_2$; and
\item $s \circ t_{f_3}$ has no negative radial departures.
\end{enumerate}
\end{lemalph}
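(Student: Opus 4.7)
The plan is to construct the desired $s$ by exploiting the freedom in factoring $f_1 \circ f_2$ through $t_{f_1}$. Since $t_{f_1}$ is piecewise-linear and strictly monotone between consecutive radial contour points of $f_1$, for each $z \in t_{f_1}([-1,1])$ the preimage $t_{f_1}^{-1}(z)$ is a finite set with one element in each monotonicity interval whose range covers $z$.  The hypothesis $t_{f_1} = t_{f_1 \circ f_2}$, together with Lemma~\ref{lem:same contour} applied to $f_1 {\restriction}_{[0,1]}$ and $(f_1 \circ f_2) {\restriction}_{[0,1]}$ (and analogously on $[-1,0]$), ensures that $t_{f_1}^{-1}(f_1 \circ f_2(x))$ is nonempty for every $x$ and has enough structure that a continuous selection $x \mapsto s(x)$ can be made, with $s(0)=0$, which automatically satisfies condition (1).

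To address (2), I would analyze the possible radial departures of $s \circ t_{f_3}$ via Proposition~\ref{prop:comp dep}.  A negative radial departure of $s \circ t_{f_3}$ must arise in one of two ways: either from a positive radial departure $\langle x_1,x_2 \rangle$ of $t_{f_3}$ with $\langle t_{f_3}(x_1),t_{f_3}(x_2) \rangle$ a negative radial departure of $s$, or from a negative radial departure of $t_{f_3}$ whose reversed image is a positive radial departure of $s$.  By Lemma~\ref{lem:f tf same dep}, the radial departure data of $t_{f_3}$ coincides (in terms of images) with that of $f_3$; and a second use of Lemma~\ref{lem:f tf same dep}, applied this time to $f_2$ and $f_2 \circ f_3$ via the hypothesis $t_{f_2} = t_{f_2 \circ f_3}$, pins down which pairs of values in $f_2([-1,1])$ can actually be realized as images of radial departures reaching from the $t_{f_3}$-level.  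This information tells us precisely which segments of the image of $s$ must be traversed with a prescribed orientation.

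The construction of $s$ would then proceed by making coherent branch selections in $t_{f_1}^{-1}$ on each subinterval between consecutive radial contour points of $f_1$, ``bridging'' at each contour point so as to preserve continuity.  At each bridge one has a binary orientation choice; these choices would be used to ensure that, on the image of $t_{f_3}$, every candidate negative radial departure of $s$ identified above is replaced by a positively oriented one.  The main obstacle is to verify that these local orientation choices are globally compatible: this is precisely where the two stability hypotheses $t_{f_1} = t_{f_1 \circ f_2}$ and $t_{f_2} = t_{f_2 \circ f_3}$ must be used in tandem, since together they guarantee that the combinatorial patterns of contours of $f_1$, $f_2$, and $f_3$ interlock in a way that admits a single global choice of $s$.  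I would expect the heart of the argument to be an induction over the radial contour points of $f_1$ (or equivalently of $t_{f_3}$), with case analysis on orientations at each step, and the key technical payoff to be that the forced orientation pattern is always the positive one.
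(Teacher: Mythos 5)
This statement is Lemma~\ref{lem:bridged s}, which the present paper \emph{cites} (as Lemma~4.1 of \cite{ammerlaan-anusic-hoehn2024}) rather than proves, so there is no proof in this document to compare your attempt against. Judged on its own merits, your proposal identifies several plausible ingredients --- the piecewise-monotone structure of $t_{f_1}$, the use of Proposition~\ref{prop:comp dep} to classify how a negative radial departure of $s \circ t_{f_3}$ could arise, and the use of Lemma~\ref{lem:f tf same dep} together with $t_{f_2} = t_{f_2 \circ f_3}$ to transfer radial-departure data from the $t_{f_3}$ level up to $f_2$ --- but it does not constitute a proof, even in outline.

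The central gap is that you assert, rather than establish, the two facts on which the entire lemma rests. First, you claim that from $t_{f_1} = t_{f_1 \circ f_2}$ ``a continuous selection $x \mapsto s(x)$ can be made'' out of $t_{f_1}^{-1}(f_1 \circ f_2(x))$. Continuity of a branch selection through a finite-to-one piecewise-linear fold is not automatic; one must actually construct a lift (there are many, and at a fold point of $t_{f_1}$ the branch must switch, so naive pointwise selection gives no continuity at all). Second, and more seriously, you explicitly flag that ``the main obstacle is to verify that these local orientation choices are globally compatible'' and then replace that verification with an expectation that ``the forced orientation pattern is always the positive one.'' That is exactly the content of conclusion (2), and it cannot be left as an expectation. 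There is no analysis of, for example, what happens when $\langle t_{f_3}(x_1), t_{f_3}(x_2)\rangle$ for a radial departure $\langle x_1, x_2\rangle$ of $t_{f_3}$ straddles several monotonicity intervals of $t_{f_1}$, nor any argument why a ``binary orientation choice'' made at one bridge point of $s$ is never forced into conflict with the choice required at an adjacent one. The two stability hypotheses would need to be turned into a concrete combinatorial statement about the departures of $f_1$, $f_2$, and $f_3$ (for instance via Lemma~\ref{lem:same contour} or Proposition~\ref{prop:comp dep}) and that statement would have to be proved, not merely invoked as a guarantee of compatibility.
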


If the conditions of Lemma~\ref{lem:bridged s} are satisfied by each triple of consecutive maps in an inverse system $\langle [-1,1],f_i \rangle$, then we can use the Lemma to change to a new inverse system, whose inverse limit is homeomorphic to the original, and which satisfies the conditions of Proposition~\ref{prop:embed 0 accessible} above.  In that way, we were able to solve the Nadler-Quinn problem for arc-like continua which are inverse limits of simplicial inverse systems in \cite{ammerlaan-anusic-hoehn2024}.  To treat the Nadler-Quinn problem in full generality, we will make a less straight-forward application of Lemma~\ref{lem:bridged s}, not on the original inverse system but on one derived from it using some approximation theory, which we develop in the next sections.

\section{Truncations of interval maps}
\label{sec:trunc}

This section is devoted to the notion of the truncation of an interval map $f$ with respect to a finite set $V$ (Definition~\ref{defn:trunc} below).  This is an elementary notion, which may be of interest in its own right, as a way of partially ``denoising'' the function $f$.  For the purposes of this paper, the key property is that truncation produces an approximation to $f$, all of whose contour points take values in the finite set $V$ (see Lemma~\ref{lem:deps trunc} below).

In the following Definition and Lemma, let $[a,b]$ represent either $[0,1]$ or $[-1,1]$.

\begin{defn}
\label{defn:trunc}
Let $f \colon [a,b] \to [-1,1]$ be a piecewise-linear map, and let $V \subset [-1,1]$ be a finite set such that $f([a,b]) \cap V \neq \emptyset$.  We define the \emph{truncation} of $f$ with respect to $V$, denoted $\trunc{f}{V}$, as follows.  Given $x \in [a,b] \smallsetminus f^{-1}(V)$, let $C_x = C_x^{f,V}$ be the connected component of $[a,b] \smallsetminus f^{-1}(V)$ which contains $x$.  By $\partial C_x$ we mean the boundary of $C_x$ in the space $[a,b]$ (as opposed to in $\mathbb{R}$).  Define $\trunc{f}{V} \colon [a,b] \to [-1,1]$ by:
\[ \trunc{f}{V}(x) = \begin{cases}
f(x) & \textrm{if } x \in f^{-1}(V), \textrm{ or if } x \notin f^{-1}(V) \textrm{ and} \\
& \;\; f(\partial C_x) \textrm{ has two elements} \\
v & \textrm{if } x \notin f^{-1}(V) \textrm{ and } f(\partial C_x) = \{v\}.
\end{cases} \]
\end{defn}

\begin{figure}
\begin{center}
\includegraphics{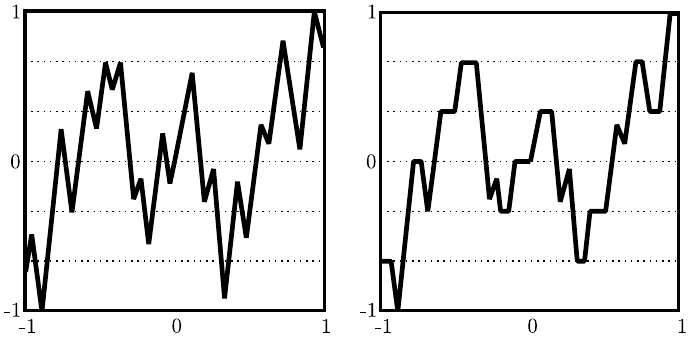}
\end{center}

\caption{On the left, a piecewise linear map $f \colon [-1,1] \to [-1,1]$, with a finite set $V \subset [-1,1]$ whose elements are indicated by dotted horizontal lines (and also $-1,1 \in V$).  On the right, the truncation $\trunc{f}{V}$ of $f$ with respect to $V$.}
\label{fig:trunc}
\end{figure}

See Figure~\ref{fig:trunc} for an illustration of a truncation of a map.  Note the following:
\begin{itemize}
\item $\trunc{f}{V}(x) = f(x)$ for all $x$ such that either $f(x) \in V$ or $\trunc{f}{V}(x) \notin V$;
\item $\trunc{f}{V}$ is continuous and piecewise-linear; and
\item if $0 \in V$ and $f(0) = 0$, then $\trunc{f}{V}(0) = 0$.
\end{itemize}

\begin{lem}
\label{lem:interval image trunc}
Let $f \colon [a,b] \to [-1,1]$ be a piecewise-linear map, and let $V \subset [-1,1]$ be a finite set such that $f([a,b]) \cap V \neq \emptyset$.  Then for any interval $A \subseteq [a,b]$, either $\trunc{f}{V}(A)$ is degenerate, or $\trunc{f}{V}(A) \subseteq f(A)$.
\end{lem}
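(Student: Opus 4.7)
The plan is to do a case analysis on how $A$ sits relative to the components of $[a,b]\smallsetminus f^{-1}(V)$. The key observation is the tripartite behavior of $\trunc{f}{V}$: on $f^{-1}(V)$ and on components $C$ with $|f(\partial C)|=2$, the truncation agrees with $f$; on a component $C$ with $f(\partial C)=\{v\}$, the truncation is the constant $v$ (including at the boundary of $C$, since $v\in V$).

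First I would handle the case where $A$ is entirely contained in a single component $C$ of $[a,b]\smallsetminus f^{-1}(V)$. If $f(\partial C)$ has two elements, then $\trunc{f}{V}$ coincides with $f$ on $C$, so $\trunc{f}{V}(A)=f(A)\subseteq f(A)$ and we are done. If instead $f(\partial C)=\{v\}$, then $\trunc{f}{V}\equiv v$ on $C$, so $\trunc{f}{V}(A)=\{v\}$ is degenerate. (The corner case $A=\{x\}$ falls under the degenerate conclusion immediately.)

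In the complementary case, $A$ meets at least two of the sets in the partition $\{f^{-1}(V)\}\cup\{\text{components of }[a,b]\smallsetminus f^{-1}(V)\}$, and I claim $\trunc{f}{V}(A)\subseteq f(A)$. Fix any $x\in A$. If $\trunc{f}{V}(x)=f(x)$, then $\trunc{f}{V}(x)\in f(A)$ trivially. Otherwise $x$ lies in some component $C$ with $f(\partial C)=\{v\}$ and $\trunc{f}{V}(x)=v$. Since $A$ is a connected subinterval that is not contained in $C$, and $A\cap C\neq\emptyset$, connectedness of $A$ forces $A$ to contain a point of $\partial C$; that boundary point is mapped by $f$ to $v$, so $v\in f(A)$, as required.

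I do not expect a serious obstacle here, but the only subtlety to be careful about is the definition of $\partial C$ as the boundary in $[a,b]$ rather than in $\mathbb{R}$: if $C$ is a component of $[a,b]\smallsetminus f^{-1}(V)$ containing an endpoint of $[a,b]$, then $\partial C$ could be a singleton, and I need the dichotomy "$|f(\partial C)|=1$ or $=2$" to match Definition~\ref{defn:trunc} correctly. In that case the unique boundary point is in $f^{-1}(V)$, so the singleton case is of type $f(\partial C)=\{v\}$ with $v\in V$, and the argument above still applies once I verify that the connectedness step goes through (either $A\subseteq C$, which is the degenerate case, or $A$ contains the sole boundary point of $C$).
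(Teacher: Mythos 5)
Your proof is correct and follows essentially the same argument as the paper, just structured as a forward case analysis rather than the paper's tighter contrapositive. The paper starts from the assumption that some value $\trunc{f}{V}(x)$ lies outside $f(A)$, deduces in one step that $A \subseteq C_x$ (since otherwise $A$ would contain a boundary point of $C_x$, which maps to that excluded value $v$), and concludes degeneracy. You split into cases "$A$ inside one component" versus "$A$ meets $f^{-1}(V)$," but the substance is identical: the truncation agrees with $f$ except on constant plateaus whose value is a boundary image, and connectedness of $A$ forces that boundary point into $A$ whenever $A$ escapes the plateau. One minor wrinkle: your phrasing "$A$ meets at least two sets in the partition" doesn't quite capture the complementary case, since $A$ could lie entirely in $f^{-1}(V)$; but your actual argument handles that trivially (every $x$ then has $\trunc{f}{V}(x) = f(x)$), so nothing is lost.
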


\begin{proof}
Let $A \subseteq [a,b]$ be an interval.  Suppose $x \in A$ and $\trunc{f}{V}(x) \notin f(A)$.  So $\trunc{f}{V}(x) \neq f(x)$, which implies $x \notin f^{-1}(V)$ and $f(\partial C_x) = \{v\}$ for some $v \in V$, and $\trunc{f}{V}(x) = v$.  Since $v \notin f(A)$, we have that $A \subseteq C_x$.  Now for any $y \in A$, we have $C_y = C_x$, and so $\trunc{f}{V}(y) = v$, thus $\trunc{f}{V}(A) = \{v\}$ is degenerate.
\end{proof}

The next Lemma gives a characterization of departures of $\trunc{f}{V}$ in terms of departures of $f$.

\begin{lem}
\label{lem:deps trunc}
Let $f \colon [0,1] \to [-1,1]$ be a piecewise-linear map such that $f(0) = 0$, and let $V \subset [-1,1]$ be a finite set with $0 \in V$.  A point $x \in (0,1]$ is a departure of $\trunc{f}{V}$ if and only if either:
\begin{enumerate}[label=(\alph{*})]
\item $f(x) \in V$ and $x$ is a departure of $f$, or
\item $f(x) \notin V$, $\partial C_x = \{c_1,c_2\}$ for some $c_1,c_2 \in [0,1]$ with $c_1 < c_2$, $f(x) \notin f([c_1,x))$, and $c_2$ is a departure of $f$.
\end{enumerate}
Moreover, if $x$ is a departure of $\trunc{f}{V}$, then $\trunc{f}{V}(x) = f(x)$, and
\begin{itemize}
\item if $f(x) > 0$, then $\trunc{f}{V}([0,x)) = [m,f(x))$, where
\[ m = \min \left( f([0,x)) \cap V \right) ;\]
\item if $f(x) < 0$, then $\trunc{f}{V}([0,x)) = (f(x),M]$, where
\[ M = \max \left( f([0,x)) \cap V \right) .\]
\end{itemize}
\end{lem}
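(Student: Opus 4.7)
The plan is to split both directions of the equivalence by cases on whether $f(x) \in V$. A crucial simplifying observation is that, by the definition of truncation, if $g(y) := \trunc{f}{V}(y) \notin V$ then $y \notin f^{-1}(V)$ and we must be in the ``two-element'' branch of the definition, forcing $g(y) = f(y)$.

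For the forward direction, assume $x$ is a departure of $g$. If $f(x) \in V$ (so $g(x) = f(x)$) and some $y \in [0,x)$ satisfies $f(y) = f(x)$, then $y \in f^{-1}(V)$ gives $g(y) = g(x)$, contradicting the departure of $g$ at $x$; hence $x$ is a departure of $f$ and (a) holds. If $f(x) \notin V$, we first rule out $f(\partial C_x)$ being a singleton, since otherwise $g$ would be constant on $\overline{C_x}$ and already take value $g(x)$ at the left boundary of $C_x$, which lies in $[0,x)$. So $\partial C_x = \{c_1, c_2\}$ with $c_1 < c_2$ and $\{f(c_1), f(c_2)\} = \{v_1, v_2\}$ two consecutive elements of $V$ bracketing $f(x) \in (v_1, v_2)$, and $g(x) = f(x)$. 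The condition $f(x) \notin f([c_1,x))$ follows from the same argument applied on $(c_1,x) \subseteq C_x$, where $g = f$. Finally, if $c_2$ were not a departure of $f$, some $z \in [0, c_2)$ would satisfy $f(z) = f(c_2)$; a case analysis excludes $z \in \{c_1\} \cup (c_1, c_2]$, forcing $z \in [0, c_1)$. Then the restriction of $g$ to $[z, c_1]$ is continuous and takes the two distinct values $f(z)$ and $f(c_1)$, both in $\{v_1, v_2\}$; so $g([z, c_1])$ is a connected set containing $[v_1, v_2] \ni f(x)$, yielding $f(x) \in g([0, x))$, a contradiction.

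For the backward direction in case (a), the observation above lets us convert any alleged $y \in [0, x)$ with $g(y) = f(x) \in V$ into a genuine witness for $f(x) \in f([0, x))$, contradicting the departure of $f$ at $x$. The main new idea is needed for case (b): suppose some $y \in [0, c_1)$ had $g(y) = f(x) \notin V$. Then $g(y) = f(y) = f(x)$, $y$ lies in a component $C_y \subseteq [0, c_1)$, and the two-element condition for $C_y$ combined with $f(C_y) \subseteq (v_1, v_2)$ forces $\{f(c_1'), f(c_2')\} = \{v_1, v_2\}$ on $\partial C_y = \{c_1', c_2'\}$. Hence both $v_1, v_2$ lie in $f([0, c_1]) \subseteq f([0, c_2))$, contradicting the assumption that $c_2$ is a departure of $f$ (since $f(c_2) \in \{v_1, v_2\}$). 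Combined with the facts that $g((c_1, x)) = f((c_1, x))$ avoids $f(x)$ (by the $f([c_1, x))$ hypothesis) and $g(c_1) \in V \neq f(x)$, this gives $f(x) \notin g([0, x))$, so $x$ is a departure of $g$ with $g(x) = f(x)$.

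For the moreover statement, assume WLOG $f(x) > 0$. Since $g([0, x))$ is connected, contains $0$, and misses $f(x)$, it is an interval contained in $[-1, f(x))$, and accumulates at $f(x)$ by continuity of $g$. For any $z \in [0, x)$, the value $g(z)$ is either $f(z) \in f([0, x))$, or some $v \in V$ arising as a boundary value of a component $C_z$; analyzing whether $C_z$ is contained in $[0, x)$ or coincides with $C_x$ (possible only in case (b)), one finds that a $V$-value weakly bounding $g(z)$ from below lies in $f([0, x)) \cap V$, so $g(z) \geq m$. Conversely, $m$ is realized by $g$ at any point of $f^{-1}(m) \cap [0, x)$, and together this gives $g([0, x)) = [m, f(x))$. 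The case $f(x) < 0$ is symmetric. The main obstacle is the ``$c_2$ is a departure of $f$'' step in the forward direction, which hinges on combining continuity of $g$, the intermediate value theorem, and the structural fact that $f(\overline{C_x}) = [v_1, v_2]$ to manufacture a $y \in [0, c_1)$ with $g(y) = f(x)$.
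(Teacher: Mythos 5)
Your proof is correct and follows essentially the same route as the paper's: the same case split on whether $f(x) \in V$, the same contradiction in the forward direction showing $c_2$ must be a departure (if not, find $z \in [0,c_1)$ with $f(z) = f(c_2)$ and use continuity/IVT on $g{\restriction}_{[z,c_1]}$ to hit $f(x)$), the same analysis of $C_{x'}$ for $x' < c_1$ in the backward direction (b), and the same interval argument for the ``moreover'' part. One small improvement over the paper: you correctly phrase the upper bound in terms of connectedness of $g([0,x))$ rather than the paper's slightly inaccurate claim that $f(x') < f(x)$ for all $x' < x$ (which is what is needed of $g(x')$, not $f(x')$); this is a cosmetic slip in the paper that you avoid. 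The ``moreover'' part is written as a sketch with the phrase ``one finds that a $V$-value weakly bounding $g(z)$ from below lies in $f([0,x)) \cap V$'' doing some work that would bear spelling out, but the underlying case analysis is sound.
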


\begin{proof}
Suppose that $x$ is a departure of $\trunc{f}{V}$.  Suppose that $f(x) \in V$, which means $\trunc{f}{V}(x) = f(x)$.  Then $x$ must be a departure of $f$, since if there were some $x' < x$ with $f(x') = f(x)$, then $\trunc{f}{V}(x') = f(x)$ as well, which would contradict the assumption that $x$ is a departure of $\trunc{f}{V}$.  Thus (a) holds.  Suppose now that $f(x) \notin V$.  Consider the component $C_x$ of $x$ in $[0,1] \smallsetminus f^{-1}(V)$.  If $f(\partial C_x) = \{v\}$ for some $v \in V$, then $\trunc{f}{V}(\overline{C_x}) = \{v\}$, contradicting the assumption that $x$ is a departure of $\trunc{f}{V}$.  So we must have $\partial C_x = \{c_1,c_2\}$ for some $c_1 < c_2$, and $f(c_1) \neq f(c_2)$, which means $\trunc{f}{V}(y) = f(y)$ for each $y \in [c_1,c_2]$.  Thus, since $x$ is a departure of $\trunc{f}{V}$, we have $f(x) = \trunc{f}{V}(x) \notin \trunc{f}{V}([0,x)) \supseteq \trunc{f}{V}([c_1,x)) = f([c_1,x))$.  Finally, if $c_2$ were not a departure of $f$, i.e.\ if $f(c_2) \in f([0,c_2))$, then we would have $f(c_2) \in f([0,c_1))$, i.e.\ $f(x') = f(c_2)$ for some $x' \in [0,c_1)$.  But then $\trunc{f}{V}(x') = f(x') = f(c_2)$, and $\trunc{f}{V}(c_1) = f(c_1)$, so $f(c_1),f(c_2) \in \trunc{f}{V}([0,x))$, meaning that the interval between $f(c_1)$ and $f(c_2)$ is contained in $\trunc{f}{V}([0,x))$.  In particular, $\trunc{f}{V}(x) = f(x) \in \trunc{f}{V}([0,x))$, contradicting the assumption that $x$ is a departure of $\trunc{f}{V}$.  Thus (b) holds.

For the converse, first suppose (a).  So $\trunc{f}{V}(x) = f(x)$, and $f(x) \notin f([0,x))$.  Note that we cannot have that $\trunc{f}{V}([0,x))$ is degenerate, since this would mean $\trunc{f}{V}([0,x)) = \{0\}$ and so $f(x) = \trunc{f}{V}(x) = 0$, contradicting the assumption that $x$ is a departure of $f$.  Therefore, by Lemma~\ref{lem:interval image trunc}, we have $\trunc{f}{V}([0,x)) \subseteq f([0,x))$, hence $\trunc{f}{V}(x) = f(x) \notin \trunc{f}{V}([0,x))$, meaning $x$ is a departure of $\trunc{f}{V}$.  Now suppose (b).  Then $\trunc{f}{V}(y) = f(y)$ for all $y \in [c_1,c_2]$, which means
\[ \trunc{f}{V}(x) = f(x) \notin f([c_1,x)) = \trunc{f}{V}([c_1,x)) .\]
Suppose for a contradiction that $\trunc{f}{V}(x') = \trunc{f}{V}(x)$ for some $x' < c_1$.  So $\trunc{f}{V}(x') = \trunc{f}{V}(x) = f(x)$, which is between $f(c_1)$ and $f(c_2)$.  Consider the component $C_{x'}$ of $x'$ in $[0,1] \smallsetminus f^{-1}(V)$.  Since $C_{x'} \subseteq [0,c_1)$, and since $c_2$ is a departure of $f$, it follows that $f(\partial C_{x'}) = \{f(c_1)\}$.  But then $\trunc{f}{V}(x') = f(c_1) \neq f(x)$, a contradiction.  Therefore,
\[ \trunc{f}{V}(x) \notin \trunc{f}{V}([0,c_1)) \]
as well, and so we have that $x$ is a departure of $\trunc{f}{V}$.

For the last statement, suppose that $x$ is a departure of $\trunc{f}{V}$, and that $f(x) > 0$ (the case $f(x) < 0$ is similar).  It is clear in each of the cases (a) and (b) above that $\trunc{f}{V}(x) = f(x)$.  Let $m = \min \left( f([0,x)) \cap V \right)$.  Note that $m \in \trunc{f}{V}([0,x))$ as well, since $\trunc{f}{V}$ and $f$ agree on $f^{-1}(V)$.  So $\trunc{f}{V}([0,x)) \supseteq [m,f(x))$.  For the reverse inclusion, since $x$ is a departure of $\trunc{f}{V}$ we have $f(x') < f(x)$ for all $x' \in [0,x)$.  Let $x' \in [0,x)$ with $f(x') \leq 0$.  If $f(x') \in V$, then $\trunc{f}{V}(x') = f(x') \geq m$ by minimality of $m$.  If $f(x') \notin V$, then consider the component $C_{x'}$ of $x'$ in $[0,1] \smallsetminus f^{-1}(V)$.  Note that $C_{x'} \subset [0,x)$ since $f(x') < 0$.  If $v \in f(\partial C_{x'})$, then $v \geq m$ by minimality of $m$.  It follows that $\trunc{f}{V}(x') \geq m$, hence $\trunc{f}{V}([0,x)) \subseteq [m,f(x))$.  Therefore, $\trunc{f}{V}([0,x)) = [m,f(x))$.
\end{proof}

In the case that $x$ is a contour point of $\trunc{f}{V}$, then in fact alternative (a) of Lemma~\ref{lem:deps trunc} must hold, as the next Lemma shows.

\begin{lem}
\label{lem:contour trunc}
Let $f \colon [0,1] \to [-1,1]$ be a piecewise-linear map such that $f(0) = 0$, and let $V \subset [-1,1]$ be a finite set with $0 \in V$.  If $x \in (0,1]$ is a contour point of $\trunc{f}{V}$, then $f(x) \in V$ and $x$ is a departure of $f$.
\end{lem}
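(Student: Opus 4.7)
The plan is to assume $x$ is a contour point of $\trunc{f}{V}$ and argue by contradiction, using the dichotomy already established in Lemma~\ref{lem:deps trunc}. Since $x$ is in particular a departure of $\trunc{f}{V}$, that Lemma provides two alternatives (a) and (b); alternative (a) is precisely the conclusion we want, so the work is to rule out (b).

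So suppose (b) holds, i.e.\ $f(x) \notin V$, $\partial C_x = \{c_1,c_2\}$ with $c_1 < x < c_2$, and $c_2$ is a departure of $f$. The first observation is that $f(c_1) \neq f(c_2)$: otherwise the definition of $\trunc{f}{V}$ would force $\trunc{f}{V}(x)$ to equal their common value in $V$, contradicting $\trunc{f}{V}(x) = f(x) \notin V$. Consequently $\trunc{f}{V}$ coincides with $f$ throughout $[c_1,c_2]$. Next, because $f$ avoids $V$ on $(c_1,c_2)$ and $0 \in V$, the function $f$ has constant (nonzero) sign on $(c_1,c_2)$, and this sign must agree with that of $f(x)$.

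Letting $y \to c_2^-$ then yields $f(c_2) \cdot f(x) \geq 0$, and since $c_2$ is a departure of $f$ we have $f(c_2) \neq 0$, so $f(c_2)$ has the same strict sign as $f(x)$. By Lemma~\ref{lem:deps trunc}(a), $c_2$ is a departure of $\trunc{f}{V}$ with the same orientation as $x$. The defining property of a contour point, applied to $x$ and the departure $c_2 > x$, then demands a departure of $\trunc{f}{V}$ of orientation \emph{opposite} to that of $x$ somewhere in $(x,c_2]$.

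This is the contradiction we seek: on the whole interval $[x,c_2]$ we have $\trunc{f}{V} = f$, and the sign argument above shows $f$ has the same sign as $f(x)$ on $[x,c_2]$, so every departure of $\trunc{f}{V}$ in that interval has the same orientation as $x$. I expect the only mildly delicate point to be the clean use of $0 \in V$ to promote constant-sign on $(c_1,c_2)$ to constant strict sign at $c_2$; once stated, the remainder is immediate and the argument handles $f(x) > 0$ and $f(x) < 0$ uniformly.
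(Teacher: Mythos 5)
Your proof is correct and follows essentially the same route as the paper's: both invoke the dichotomy of Lemma~\ref{lem:deps trunc}, assume alternative (b) for a contradiction, identify $c_2$ as a departure of $\trunc{f}{V}$ of the same orientation as $x$, and conclude that no departure of opposite orientation can exist in $(x,c_2]$, violating the contour-point condition. The only difference is that you spell out the sign argument (using $0 \in V$ to get constant nonzero sign of $f$ on $(c_1,c_2)$) where the paper simply writes that this step is clear.
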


\begin{proof}
Suppose $x$ is a contour point of $\trunc{f}{V}$.  By Lemma~\ref{lem:deps trunc}, if $f(x) \notin V$, then $\partial C_x = \{c_1,c_2\}$ for some $c_1,c_2 \in [0,1]$ with $c_1 < c_2$, and $c_2$ is a departure of $f$.  Then again by Lemma~\ref{lem:deps trunc}, $c_2$ is a departure of $\trunc{f}{V}$.  But clearly each departure of $\trunc{f}{V}$ in $(x,c_2]$ has the same orientation as $x$, a contradiction with the definition of a contour point.  Therefore $f(x) \in V$, and $x$ is a departure of $f$ (by Lemma~\ref{lem:deps trunc}).
\end{proof}

Let $f \colon [-1,1] \to [-1,1]$ be such that $f(0) = 0$, and let $V \subset [-1,1]$ be a finite set with $0 \in V$.  Recall that $\trunc{f}{V}$ has a well-defined radial contour factor if the restrictions $\trunc{f}{V} {\restriction}_{[0,1]}$ and $\trunc{f}{V} {\restriction}_{[-1,0]}$ are non-constant.  This is equivalent to the condition that both of the sets $f([0,1]) \cap V \smallsetminus \{0\}$ and $f([-1,0]) \cap V \smallsetminus \{0\}$ are non-empty.  A sufficient condition for $\trunc{f}{V}$ to have a well-defined radial contour factor is that the diameters of the sets $f([0,1])$ and $f([-1,0])$ are greater than or equal to $\mesh(V)$.

\begin{lem}
\label{lem:compare trunc}
Let $f,g \colon [-1,1] \to [-1,1]$ be piecewise-linear maps such that $f(0) = g(0) = 0$, and let $V,W \subset [-1,1]$ be finite sets with $0 \in V \subseteq W$ and such that $\trunc{f}{V}$ and $\trunc{g}{V}$ have well-defined radial contour factors.  Then:
\begin{enumerate}[label=(\arabic{*})]
\item \label{comp trunc} if $f^{-1}(V)$ is finite (e.g.\ if $f$ is nowhere constant), then $\trunc{f}{V} \circ \trunc{g}{f^{-1}(V)} = \trunc{f \circ g}{V}$;
\item if $f$ and $g$ have the same radial contour factors, then so do $\trunc{f}{V}$ and $\trunc{g}{V}$;
\item $\trunc{f}{V}$ and $\trunc{\trunc{f}{V}}{W}$ have the same radial contour factors;
\item $\trunc{\trunc{f}{V}}{W} = \trunc{\trunc{f}{W}}{V}$; and
\item \label{fV tfWV} $\trunc{f}{V}$ and $\trunc{t_{\trunc{f}{W}}}{V}$ have the same radial contour factors.
\end{enumerate}
\end{lem}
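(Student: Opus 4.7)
My plan is to prove the five statements in the order (1), (2), (4), (3), (5), deriving the last two from the first three. For (1), I will argue by case analysis on $x$. Since $(f \circ g)^{-1}(V) = g^{-1}(f^{-1}(V))$, the components of $[-1,1] \setminus (f \circ g)^{-1}(V)$ are exactly those of $[-1,1] \setminus g^{-1}(f^{-1}(V))$, and I split on whether $g(x) \in f^{-1}(V)$ and, if not, on whether $g(\partial C_x^{g, f^{-1}(V)})$ is a singleton $\{v'\}$ or a two-element set $\{v_1', v_2'\}$. In the singleton sub-case both sides evaluate to $f(v')$; in the two-element sub-case I use that $g(C_x^{g, f^{-1}(V)})$ is precisely the open interval in $[-1,1] \setminus f^{-1}(V)$ between $v_1'$ and $v_2'$ to identify $C_{g(x)}^{f,V}$, splitting once more on whether $f(v_1') = f(v_2')$. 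For (2), I will apply Lemma~\ref{lem:same contour 2}: if $\alpha$ is a contour point of $\trunc{f}{V}{\restriction}_{[0,1]}$, then by Lemma~\ref{lem:contour trunc}, $f(\alpha) \in V$ and $\alpha$ is a departure of $f$, and by $t_f = t_g$ together with Lemma~\ref{lem:same contour} there is a departure $\alpha^*$ of $g$ with $g([0,\alpha^*)) = f([0,\alpha))$ and $g(\alpha^*) = f(\alpha) \in V$; Lemma~\ref{lem:deps trunc}(a) then makes $\alpha^*$ a departure of $\trunc{g}{V}$, and the explicit image formulas yield $\trunc{f}{V}([0,\alpha)) = \trunc{g}{V}([0,\alpha^*))$. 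The converse and the left-side arguments are symmetric.

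For (4), I will do a case analysis on $x$ based on whether $f(x) \in V$, $f(x) \in W \setminus V$, or $f(x) \notin W$, and on the boundary data of the nested components $C^V_x := C_x^{f,V} \supseteq C_x^{f,W} =: C^W_x$. The crucial observation, which uses $V \subseteq W$, is that any bump of $\trunc{f}{W}$ lying inside $C^V_x$ whose bump-value happens to be in $V$ must coincide with $C^V_x$ itself: this follows from the connectedness of $f(C^V_x)$, its disjointness from $V$, and the fact that $f$ takes the same $V$-value on both points of $\partial C^V_x$. Consequently, the component $D_x$ of $x$ in $[-1,1] \setminus (\trunc{f}{W})^{-1}(V)$ equals $C^V_x$, and both $\trunc{\trunc{f}{V}}{W}(x)$ and $\trunc{\trunc{f}{W}}{V}(x)$ reduce to the same expression determined by $f(\partial C^V_x)$ and $f(\partial C^W_x)$.

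Writing $h = \trunc{f}{V}$, part (3) will follow by Lemma~\ref{lem:same contour 2}: contour points $\alpha$ of $h$ satisfy $h(\alpha) \in V \subseteq W$ by Lemma~\ref{lem:contour trunc}, hence are departures of $\trunc{h}{W}$ with $h([0,\alpha)) = \trunc{h}{W}([0,\alpha)) = [h(\alpha_-), h(\alpha))$ (where $\alpha_-$ is the preceding contour point and, in the formula of Lemma~\ref{lem:deps trunc}, $m = h(\alpha_-)$ since $h(\alpha_-) \in V \subseteq W$ realizes the minimum). Conversely, I will argue any contour point $\alpha'$ of $\trunc{h}{W}$ is a contour point of $h$: otherwise a later same-orientation departure $y$ of $h$ with no intervening opposite-orientation departure of $h$ would, via Lemma~\ref{lem:deps trunc}, yield the same pathology in $\trunc{h}{W}$ and contradict $\alpha'$ being contour there. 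Finally, (5) will be obtained by chaining: the idempotence $t_{t_g} = t_g$ shows $\trunc{f}{W}$ and $t_{\trunc{f}{W}}$ have the same radial contour factor, so (2) gives $t_{\trunc{\trunc{f}{W}}{V}} = t_{\trunc{t_{\trunc{f}{W}}}{V}}$; (4) rewrites the left side as $t_{\trunc{\trunc{f}{V}}{W}}$; and (3) identifies this with $t_{\trunc{f}{V}}$. I expect the main obstacle to be the reverse direction of (3), specifically the delicate case-(b) analysis in Lemma~\ref{lem:deps trunc} that tracks how a ``phantom'' same-orientation departure of $h$ necessarily leaves a trace (at the right boundary $c_2$ of its enclosing non-$W$ component) as a same-orientation departure of $\trunc{h}{W}$, thereby preventing spurious contour points in the truncation.
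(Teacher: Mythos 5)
Your parts (1), (2), (4) and (5) track the paper's proof closely. For (1) the case split on $g(x) \in f^{-1}(V)$, then on $|g(\partial C_x)|$, then on whether $f$ identifies the two boundary values, is exactly the paper's argument. For (2) you use Lemma~\ref{lem:same contour 2}, Lemma~\ref{lem:contour trunc}, Lemma~\ref{lem:same contour} and Lemma~\ref{lem:deps trunc} in the same way as the paper. For (4), the paper organizes the cases differently (by whether $f(x) \in V$ and whether $f(\partial C_x^{f,V})$ is a singleton or a pair), but your observation that the component $D_x$ of $x$ in $[-1,1]\smallsetminus(\trunc{f}{W})^{-1}(V)$ coincides with $C_x^{f,V}$ is correct and captures the same content. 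Your chaining argument for (5) is exactly the paper's.

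Your argument for the converse direction of (3) has a genuine gap. You claim that every contour point $\alpha'$ of $\trunc{h}{W}$ (where $h = \trunc{f}{V}$) is a contour point of $h$, and propose to see this by tracking a phantom same-orientation departure $y$ of $h$ to a same-orientation departure of $\trunc{h}{W}$ at the right boundary $c_2$ of the component $C_y^{h,W}$. But when $h(\partial C_y^{h,W})$ is a singleton (i.e.\ both boundary points of $C_y^{h,W}$ carry the same $W$-value $w_1$, which is perfectly possible when $h(y)$ lies strictly between $w_1$ and the next $W$-value and $h$ returns to $w_1$ on both sides), $\trunc{h}{W}$ is constant on $\overline{C_y^{h,W}}$ and $c_2$ is \emph{not} a departure of $\trunc{h}{W}$, so the trace you invoke does not exist there. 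What rescues the claim is the special structure of $h=\trunc{f}{V}$: since $h(y) = f(y)\notin V$, the component $C_y^{f,V}$ of $y$ in $[0,1]\smallsetminus f^{-1}(V)$ has two-element boundary image $\{f(c_1),f(c_2)\}\subset V$ (otherwise $h$ would be constant near $y$ and $y$ could not be a departure of $h$), and its right endpoint $c_2$ \emph{is} a departure of $h$ with value in $V\subseteq W$, hence a departure of $\trunc{h}{W}$; but that is a different component from the one your sketch names, and this part of the argument must be made explicit. The paper avoids the issue entirely and proves something weaker but sufficient: it does not show $\alpha'$ is a contour point of $h$, only that $\alpha'$ is a departure of $h$ with $f(\alpha')\in V$, by arguing with $C_{\alpha'}^{f,V}$ (the component of $\alpha'$, not of a later phantom $y$) and Lemma~\ref{lem:deps trunc}(b). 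That is a cleaner route to Lemma~\ref{lem:same contour 2}'s condition~(2), and I'd recommend it over your stronger detour, whose justification needs the extra structural argument just described.
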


We leave it to the reader to confirm that each of the maps $f$, $g$, $\trunc{\trunc{f}{V}}{W}$, $\trunc{f}{W}$, and $\trunc{t_{\trunc{f}{W}}}{V}$ have well-defined radial contour factors under the assumption that $\trunc{f}{V}$ and $\trunc{g}{V}$ have well-defined radial contour factors.  See Figure~\ref{fig:successive trunc} for an illustration of a map truncated with respect to two finite sets $V \subset W$ in succession.  The reader may also find it helpful to refer to Figure~\ref{fig:refine trunc contours} below, which depicts the same map $f$ and the same sets $V,W$ as in Figure~\ref{fig:successive trunc}, but there $\trunc{f}{W}$ is drawn instead of $\trunc{\trunc{f}{V}}{W}$.

\begin{figure}
\begin{center}
\includegraphics{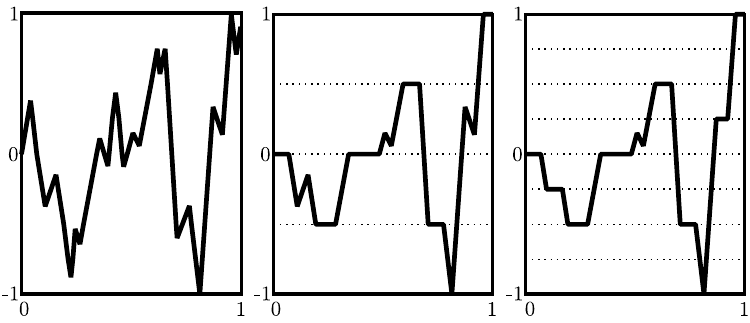}
\end{center}

\caption{On the left, a piecewise linear map $f \colon [0,1] \to [-1,1]$.  In the center, the truncation $\trunc{f}{V}$ of $f$ with respect to a finite set $V \subset [-1,1]$ whose elements are indicated by dotted horizontal lines (and also $-1,1 \in V$).  On the right, the truncation $\trunc{\trunc{f}{V}}{W}$ of $\trunc{f}{V}$ with respect to another finite set $W \supset V$.}
\label{fig:successive trunc}
\end{figure}

\begin{proof}
For \ref{comp trunc}, let $x \in [-1,1]$.  We first point out the following straightforward observation:
\begin{itemize}
\item $x \notin (f \circ g)^{-1}(V)$ if and only if $g(x) \notin f^{-1}(V)$, and in this case,
\[ C_x^{f \circ g, V} = C_x^{g, f^{-1}(V)} .\]
\end{itemize}

If $x \in (f \circ g)^{-1}(V)$, then $g(x) \in f^{-1}(V)$, and so $\trunc{g}{f^{-1}(V)}(x) = g(x)$ and $\trunc{f}{V} \circ \trunc{g}{f^{-1}(V)}(x) = \trunc{f}{V}(g(x)) = f \circ g(x) = \trunc{f \circ g}{V}(x)$.

Suppose now that $x \notin (f \circ g)^{-1}(V)$, so that $g(x) \notin f^{-1}(V)$.  Let $C_x = C_x^{f \circ g, V} = C_x^{g, f^{-1}(V)}$.  If $g(\partial C_x) = \{p\}$ for some $p \in f^{-1}(V)$, then $f \circ g(\partial C_x) = \{f(p)\}$, and we have $\trunc{g}{f^{-1}(V)}(x) = p$ and $\trunc{f}{V} \circ \trunc{g}{f^{-1}(V)}(x) = f(p) = \trunc{f \circ g}{V}(x)$.  Suppose now that $g(\partial C_x) = \{p_1,p_2\}$ for some $p_1 \neq p_2$, which means $\trunc{g}{f^{-1}(V)}(x) = g(x)$.  If $f(p_1) = f(p_2) = v$ for some $v \in V$, i.e.\ if $f \circ g(\partial C_x) = \{v\}$, then $\trunc{f}{V} \circ \trunc{g}{f^{-1}(V)}(x) = \trunc{f}{V}(g(x)) = v = \trunc{f \circ g}{V}(x)$.  Finally, if $f(p_1) \neq f(p_2)$, then $\trunc{f}{V} \circ \trunc{g}{f^{-1}(V)}(x) = f \circ g(x) = \trunc{f \circ g}{V}(x)$.

For (2), suppose $f$ and $g$ have the same radial contour factors, i.e.\ $t_f = t_g$.  To prove $\trunc{f}{V}$ and $\trunc{g}{V}$ have the same radial contour factor, we treat the ``right halves'', $\trunc{f}{V} {\restriction}_{[0,1]}$ and $\trunc{g}{V} {\restriction}_{[0,1]}$ first, using Lemma~\ref{lem:same contour 2} as follows.  Let $\alpha \in (0,1]$ be a right contour point of $\trunc{f}{V}$.  By Lemma~\ref{lem:contour trunc}, $f(\alpha) \in V$ and $\alpha$ is a right departure of $f$.  By Lemma~\ref{lem:same contour}, there exists a right departure $x' \in (0,1]$ of $g$ such that $f([0,\alpha)) = g([0,x'))$.  Now $g(x') = f(\alpha) \in V$, so by Lemma~\ref{lem:deps trunc}, $x'$ is a right departure of $\trunc{g}{V}$.  Suppose $f(\alpha) > 0$.  Let $m = \min \left( f([0,\alpha)) \cap V \right) = \min \left( g([0,x')) \cap V \right)$.  Then $\trunc{f}{V}([0,\alpha)) = [m,f(\alpha)) = [m,g(x')) = \trunc{g}{V}([0,x'))$ by Lemma~\ref{lem:deps trunc}.  The case where $f(\alpha) < 0$ is similar.  Thus, we have established condition (1) of Lemma~\ref{lem:same contour 2}.  Since $f$ and $g$ are interchangeable here, we may similarly establish condition (2) of Lemma~\ref{lem:same contour 2}.  So by Lemma~\ref{lem:same contour 2}, this completes the proof that $\trunc{f}{V} {\restriction}_{[0,1]}$ and $\trunc{g}{V} {\restriction}_{[0,1]}$ have the same contour factor.  The proof for the ``left halves'', $\trunc{f}{V} {\restriction}_{[-1,0]}$ and $\trunc{g}{V} {\restriction}_{[-1,0]}$ (i.e.\ the proof that the contour factors of $\left( \trunc{f}{V} {\restriction}_{[-1,0]} \right) \circ \mathsf{r}$ and $\left( \trunc{g}{V} {\restriction}_{[-1,0]} \right) \circ \mathsf{r}$ are the same), is similar.

For (3), as above, we treat the ``right halves'', $\trunc{f}{V} {\restriction}_{[0,1]}$ and $\trunc{\trunc{f}{V}}{W} {\restriction}_{[0,1]}$ first, using Lemma~\ref{lem:same contour 2} as follows.  Let $\alpha \in (0,1]$ be a right contour point of $\trunc{f}{V}$.  Then by Lemma~\ref{lem:contour trunc}, $\trunc{f}{V}(\alpha) = f(\alpha) \in V$, hence $\trunc{f}{V}(\alpha) \in W$, and so by Lemma~\ref{lem:deps trunc}, $\alpha$ is a right departure of $\trunc{\trunc{f}{V}}{W}$ and $\trunc{\trunc{f}{V}}{W}(\alpha) = f(\alpha)$.  Suppose $f(\alpha) > 0$.  Let $m = \min \left( f([0,\alpha)) \cap V \right)$, so that $\trunc{f}{V}([0,\alpha)) = [m,f(\alpha))$ according to Lemma~\ref{lem:deps trunc}.  Since $m \in V \subseteq W$, it immediately follows that $m = \min \left( \trunc{f}{V}([0,\alpha)) \cap W \right)$, hence $\trunc{\trunc{f}{V}}{W}([0,\alpha)) = [m,f(\alpha))$ again by Lemma~\ref{lem:deps trunc}.  Thus, $\trunc{f}{V}([0,\alpha)) = \trunc{\trunc{f}{V}}{W}([0,\alpha))$.  The case where $f(\alpha) < 0$ is similar.  Thus, we have established condition (1) of Lemma~\ref{lem:same contour 2}.

Now let $\alpha' \in (0,1]$ be a right contour point of $\trunc{\trunc{f}{V}}{W}$.  By Lemmas~\ref{lem:deps trunc} and \ref{lem:contour trunc}, $\alpha'$ is a right departure of $\trunc{f}{V}$ and $\trunc{\trunc{f}{V}}{W}(\alpha') = \trunc{f}{V}(\alpha') = f(\alpha') \in W$.  We claim that $f(\alpha') \in V$.  Indeed, if not, then by Lemma~\ref{lem:deps trunc}, $\partial C_{\alpha'}^{f,V} = \{c_1,c_2\}$ for some $c_1,c_2 \in [0,1]$ with $c_1 < c_2$, and $c_2$ is a departure of $f$.  Now by Lemma~\ref{lem:deps trunc} (applied twice), $c_2$ is a departure of $\trunc{\trunc{f}{V}}{W}$ which has the same orientation as $\alpha'$.  In fact, every departure of $\trunc{\trunc{f}{V}}{W}$ in $(\alpha',c_2]$ has the same orientation as $\alpha'$.  But this contradicts the assumption that $\alpha'$ is a contour point of $\trunc{\trunc{f}{V}}{W}$.  Thus, $f(\alpha') \in V$.  As in the previous paragraph, it can now be seen that, in the case where $f(\alpha') > 0$, $\trunc{f}{V}([0,\alpha')) = \trunc{\trunc{f}{V}}{W}([0,\alpha')) = [m,f(\alpha'))$, where $m = \min \left( f([0,\alpha')) \cap V \right)$, and the case where $f(\alpha') < 0$ is similar.  This establishes condition (2) of Lemma~\ref{lem:same contour 2}.

By Lemma~\ref{lem:same contour 2}, this completes the proof that $\trunc{f}{V} {\restriction}_{[0,1]}$ and $\trunc{\trunc{f}{V}}{W} {\restriction}_{[0,1]}$ have the same contour factor.  The proof for the ``left halves'', $\trunc{f}{V} {\restriction}_{[-1,0]}$ and $\trunc{\trunc{f}{V}}{W} {\restriction}_{[-1,0]}$ (i.e.\ the proof that the contour factors of $\left( \trunc{f}{V} {\restriction}_{[-1,0]} \right) \circ \mathsf{r}$ and $\left( \trunc{\trunc{f}{V}}{W} {\restriction}_{[-1,0]} \right) \circ \mathsf{r}$ are the same), is similar.

For (4), let $x \in [-1,1]$.  We consider five cases.
\begin{itemize}
\item Case 1: $f(x) \in V$.  Here we immediately have $\trunc{f}{V}(x) = \trunc{f}{W}(x) = f(x)$, so $\trunc{\trunc{f}{V}}{W}(x) = \trunc{\trunc{f}{W}}{V}(x) = f(x)$.
\item Case 2: $f(x) \notin V$, $f(\partial C_x^{f,V}) = \{v\}$.  In this case $\trunc{f}{V}(x) = v \in V \subseteq W$, so $\trunc{\trunc{f}{V}}{W}(x) = v$.  If $\trunc{f}{W}(x) = v$, then we immediately have that $\trunc{\trunc{f}{W}}{V}(x) = v$.  Otherwise, by Lemma~\ref{lem:interval image trunc} $\trunc{f}{W}(C_x^{f,V}) \subseteq f(C_x^{f,V})$, and also $\trunc{f}{W}(\partial C_x^{f,V}) = f(\partial C_x^{f,V}) = \{v\}$.  It follows that $C_x^{\trunc{f}{W},V} = C_x^{f,V}$, and $\trunc{\trunc{f}{W}}{V}(x) = v$ also.
\item Case 3: $f(x) \notin V$, $f(\partial C_x^{f,V})$ has two elements.  In this case $\trunc{f}{V} = f$ on $\overline{C_x^{f,V}}$.  Note that in general, if $f = g$ on a closed interval $A \subseteq [-1,1]$ and $f(\partial A) \subseteq W$, then $\trunc{f}{W} = \trunc{g}{W}$ on $A$.  Here, this implies that $\trunc{\trunc{f}{V}}{W} = \trunc{f}{W}$ on $\overline{C_x^{f,V}}$ as well.  In particular, $\trunc{\trunc{f}{V}}{W}(x) = \trunc{f}{W}(x)$.  By Lemma~\ref{lem:interval image trunc} $\trunc{f}{W}(C_x^{f,V}) \subseteq f(C_x^{f,V})$, and also $\trunc{f}{W}(\partial C_x^{f,V}) = f(\partial C_x^{f,V})$.  It follows that $C_x^{\trunc{f}{W},V} = C_x^{f,V}$.  Hence, $\trunc{\trunc{f}{W}}{V}(x) = \trunc{f}{W}(x)$ also.
\end{itemize}

For \ref{fV tfWV}, note that $\trunc{f}{W}$ and $t_{\trunc{f}{W}}$ have the same radial contour factors (namely $t_{\trunc{f}{W}}$ itself), so by (2), the same is true for $\trunc{\trunc{f}{W}}{V}$ and $\trunc{t_{\trunc{f}{W}}}{V}$.  By (4), $\trunc{\trunc{f}{W}}{V} = \trunc{\trunc{f}{V}}{W}$, and this map has the same radial contour factor as $\trunc{f}{V}$ by (3).  Thus, $\trunc{f}{V}$ and $\trunc{t_{\trunc{f}{W}}}{V}$ have the same radial contour factors.
\end{proof}

\begin{lem}
\label{lem:refine trunc contours}
Let $f \colon [0,1] \to [-1,1]$ be a piecewise-linear map with $f(0) = 0$.  Let $V,W \subset [-1,1]$ be finite sets with $0 \in V \subseteq W$.  For each contour point $\alpha$ of $\trunc{f}{V}$, there exists a contour point $\gamma$ of $\trunc{f}{W}$ such that
\begin{enumerate}
\item $\alpha$ and $\gamma$ have the same orientation for $f$;
\item $\alpha \leq \gamma$; and
\item if $\alpha'$ is any contour point of $\trunc{f}{V}$ with $\alpha < \alpha'$, then $\gamma < \alpha'$.
\end{enumerate}

Consequently, if $\trunc{f}{V}$ and $\trunc{f}{W}$ have the same number of contour points, say the contour points of $\trunc{f}{V}$ are $0 < \alpha_1 < \cdots \alpha_n \leq 1$ and the contour points of $\trunc{f}{W}$ are $0 < \gamma_1 < \cdots \gamma_n \leq 1$, then for each $i$, $\alpha_i$ and $\gamma_i$ have the same orientation for $f$, and $|f(\alpha_i)| \leq |f(\gamma_i)|$.
\end{lem}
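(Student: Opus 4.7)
The plan is to fix a contour point $\alpha$ of $\trunc{f}{V}$ and, for definiteness, assume it is positively oriented; the negative case is symmetric. The first observation is that by Lemma~\ref{lem:contour trunc}, $f(\alpha) \in V \subseteq W$ and $\alpha$ is a positive departure of $f$; hence, by alternative (a) of Lemma~\ref{lem:deps trunc}, $\alpha$ is automatically a positive departure of $\trunc{f}{W}$. In particular, \emph{every} contour point of $\trunc{f}{V}$ is already a departure of $\trunc{f}{W}$ of the same orientation.

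Next, let $\alpha'$ denote the next contour point of $\trunc{f}{V}$ strictly greater than $\alpha$ if one exists, and otherwise set $\alpha' = +\infty$. Since the contour points of $\trunc{f}{V}$ alternate in orientation, $\alpha'$ (when finite) is a \emph{negative} contour point of $\trunc{f}{V}$, and therefore also a negative departure of $\trunc{f}{W}$ by the previous paragraph. I then define $\gamma$ to be the largest departure of $\trunc{f}{W}$ in $[\alpha,\alpha') \cap [0,1]$ that has the same orientation as $\alpha$; this maximum exists since $\trunc{f}{W}$ is piecewise-linear (so has only finitely many departures) and $\alpha$ itself belongs to this set. Conclusions (1) and (2) are then immediate, and (3) follows from $\gamma \in [\alpha,\alpha')$.

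The main step — and the one I expect to be the real obstacle — is showing that $\gamma$ is actually a \emph{contour point} of $\trunc{f}{W}$, not just a departure. Let $x > \gamma$ be any departure of $\trunc{f}{W}$; I must exhibit a negatively oriented departure of $\trunc{f}{W}$ inside $(\gamma,x]$. Split into two cases. If $\gamma < x < \alpha'$, then $x \in [\alpha,\alpha')$, and by the maximality in the choice of $\gamma$ the departure $x$ must be negatively oriented, so $x$ itself serves. If $x \geq \alpha'$, then $\alpha'$ itself serves: by the first paragraph $\alpha'$ is a negative departure of $\trunc{f}{W}$, and $\gamma < \alpha' \leq x$. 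The delicate point is that this argument simultaneously uses the alternation of orientations at the $\trunc{f}{V}$-level (to know $\alpha'$ has opposite orientation to $\alpha$) and the maximality at the $\trunc{f}{W}$-level (to force $x$ to have opposite orientation when $x < \alpha'$); the window $[\alpha,\alpha')$ has to be exactly this one for both halves to fit together.

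For the consequent, applying the construction to each $\alpha_i$ produces $\gamma_1,\ldots,\gamma_n$ with $\gamma_1 < \alpha_2 \leq \gamma_2 < \cdots < \alpha_n \leq \gamma_n$, and since $\trunc{f}{W}$ has exactly $n$ contour points this list exhausts them in order. The agreement of orientations for $f$ follows from $\trunc{f}{W}(\gamma_i) = f(\gamma_i)$ given by the ``moreover'' clause of Lemma~\ref{lem:deps trunc}. Finally, $|f(\alpha_i)| \leq |f(\gamma_i)|$ reduces to the elementary fact that if $g(0)=0$ and $x_1 \leq x_2$ are both positive departures of $g$, then $g(x_1) \leq g(x_2)$ (and symmetrically for negative departures), which follows from the intermediate value theorem applied to $g$ on $[0,x_2]$: any $y<x_2$ with $g(y) > g(x_2) > 0$ would force some $y' \in (0,y)$ with $g(y') = g(x_2)$, contradicting the departure property of $x_2$.
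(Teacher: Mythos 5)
Your proof is correct and follows essentially the same route as the paper's: use Lemma~\ref{lem:contour trunc} plus Lemma~\ref{lem:deps trunc} to see that each contour point of $\trunc{f}{V}$ is a departure of $\trunc{f}{W}$ with the same orientation, then take $\gamma$ to be the maximal same-orientation departure of $\trunc{f}{W}$ before the next opposite-orientation departure, and check it is a contour point. You spell out the step the paper treats as a general fact (that such a $\gamma$ is a contour point of $\trunc{f}{W}$), and you also supply the easy IVT argument for the ``Consequently'' clause which the paper leaves implicit; both of these are welcome additions.

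One small inaccuracy: your parenthetical claim that $\trunc{f}{W}$ ``has only finitely many departures'' because it is piecewise-linear is false in general --- a strictly monotone linear piece attaining new extreme values produces a whole interval of departures (it is the \emph{contour points} that are finitely many). The maximum you need does exist, but for a different reason: if $y_k \nearrow y$ with each $y_k$ a positive departure, continuity forces $y$ to be a positive departure as well, and since $\alpha'$ is a negative departure the supremum cannot equal $\alpha'$; hence the supremum of the same-orientation departures in $[\alpha,\alpha')$ is attained. This does not affect the rest of the argument.
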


In particular, Lemma~\ref{lem:refine trunc contours} implies that $\trunc{f}{W}$ has at least as many contour points as $\trunc{f}{V}$.  See Figure~\ref{fig:refine trunc contours} for an illustration of a map truncated with respect to two finite sets $V \subset W$.

\begin{figure}
\begin{center}
\includegraphics{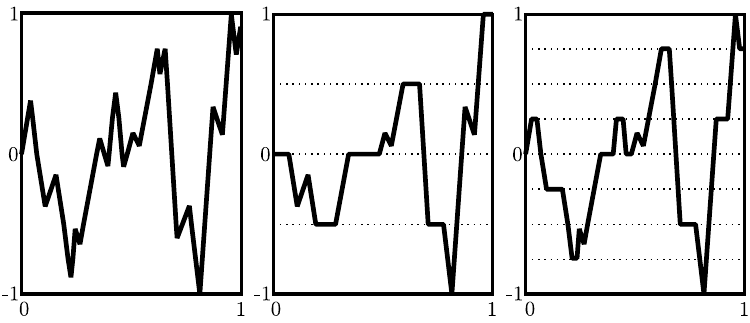}
\end{center}

\caption{On the left, a piecewise linear map $f \colon [0,1] \to [-1,1]$.  In the center, the truncation $\trunc{f}{V}$ of $f$ with respect to a finite set $V \subset [-1,1]$ whose elements are indicated by dotted horizontal lines (and also $-1,1 \in V$).  On the right, the truncation $\trunc{f}{W}$ of $f$ with respect to another finite set $W \supset V$.}
\label{fig:refine trunc contours}
\end{figure}

\begin{proof}
Let $\alpha$ be a contour point of $\trunc{f}{V}$.  By Lemma~\ref{lem:contour trunc}, $\alpha$ is a departure of $f$ and $f(\alpha) \in V$.  Since $V \subseteq W$, we have by Lemma~\ref{lem:deps trunc} that $\alpha$ is a departure of $\trunc{f}{W}$.  Therefore, there exists a contour point $\gamma$ of $\trunc{f}{W}$ with $\alpha \leq \gamma$, and such that for every departure $x$ of $\trunc{f}{W}$ with $\alpha \leq x \leq \gamma$, $x$ and $\alpha$ have the same orientation.  Now suppose there is a contour point $\alpha'$ of $\trunc{f}{V}$ with $\alpha < \alpha'$.  By taking $\alpha'$ to be the first such contour point, we have that $\alpha$ and $\alpha'$ have opposite orientations.  As above, we conclude that $\alpha'$ is a departure of $\trunc{f}{W}$.  Therefore, by the properties defining $\gamma$, we must have $\gamma < \alpha'$.
\end{proof}

\section{Mesh, truncation, and contour factors}
\label{sec:mesh trunc}

In this section we collect some further results about truncations, concentrating on those which involve taking a preimage of a finite set $V \subset [-1,1]$.

For $x,y \in [-1,1]$ and $\varepsilon > 0$, we say $x$ and $y$ are \emph{$\varepsilon$-close} if $|x-y| \leq \varepsilon$.  We say two maps $f,g \colon [a,b] \to [-1,1]$ are $\varepsilon$-close if $f(x)$ and $g(x)$ are $\varepsilon$-close for all $x \in [a,b]$.

In the following Definitions and Lemma, let $[a,b]$ represent either $[0,1]$ or $[-1,1]$.

\begin{defn}
If $V \subset [a,b]$ is a non-empty finite set, define the \emph{mesh} of $V$ by
\[ \mesh(V) = \max\{v_2 - v_1: v_1,v_2 \in V \cup \{a,b\}, \; v_1 < v_2, \; (v_1,v_2) \cap V = \emptyset\} .\]
\end{defn}

Equivalently, we have that $\mesh(V) \leq L$ if and only if for any closed interval $A \subseteq [a,b]$ of length $L$, $V \cap A \neq \emptyset$.

\begin{defn}
Let $V \subset [a,b]$ be a finite set.  For $x,y \in [a,b]$, we say that $x$ and $y$ are \emph{$V$-close}, written $x \overset{V}{=} y$, if there is no point of $V$ strictly between $x$ and $y$.  That is, either
\begin{itemize}
\item $x = y$; or
\item $x < y$ and $(x,y) \cap V = \emptyset$; or
\item $y < x$ and $(y,x) \cap V = \emptyset$.
\end{itemize}
If $V \subset [-1,1]$ and $f,g \colon [a,b] \to [-1,1]$, we say $f,g$ are \emph{$V$-close}, written $f \overset{V}{=} g$, if $f(x) \overset{V}{=} g(x)$ for all $x \in [a,b]$.
\end{defn}

Clearly, if $x$ and $y$ are $V$-close, then they are $\mesh(V)$-close.

\begin{lem}
\label{lem:V-close props}
Let $f \colon [a,b] \to [-1,1]$ be a piecewise-linear map, and let $V,W \subset [-1,1]$ be finite sets with $V \subseteq W$ and such that $f([a,b]) \cap V \neq \emptyset$.  Let $x,y,z \in [-1,1]$.  Then:
\begin{enumerate}[label=(\arabic{*})]
\item if $x \overset{V}{=} y$, $y \overset{V}{=} z$, and $y \notin V$, then $x \overset{V}{=} z$;
\item if $x \overset{W}{=} y$, then $x \overset{V}{=} y$;
\item $f \overset{V}{=} \trunc{f}{V}$;
\item $\trunc{f}{W} \overset{V}{=} \trunc{f}{V}$;
\item \label{f(x) f(y) V-close} if $f^{-1}(V)$ is finite (e.g.\ if $f$ is nowhere constant), and if $x \overset{f^{-1}(V)}{=\joinrel=} y$, then $f(x) \overset{V}{=} f(y)$; and
\item \label{f(x) truncf(y) V-close} if $f^{-1}(V)$ is finite (e.g.\ if $f$ is nowhere constant), and if $x \overset{f^{-1}(V)}{=\joinrel=} y$, then $f(x) \overset{V}{=} \trunc{f}{V}(y)$.
\end{enumerate}
\end{lem}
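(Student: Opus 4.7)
The plan is to establish the six statements in order, with (1) serving as the basic transitivity tool, (3) carrying the only genuinely geometric content, and the remaining items following by combining these.

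For (1), I would split on whether $y$ lies in the closed interval $[\min(x,z),\max(x,z)]$. If it does, then the open interval $(\min(x,z),\max(x,z))$ is contained in the union of the open interval strictly between $x$ and $y$, the singleton $\{y\}$, and the open interval strictly between $y$ and $z$; the first and third of these miss $V$ by the $V$-closeness hypotheses, and the middle misses $V$ by $y\notin V$. If $y$ lies outside that closed interval, then $(\min(x,z),\max(x,z))$ is contained in the open interval strictly between $y$ and whichever of $x,z$ is farther from $y$, which is disjoint from $V$ by hypothesis. Item (2) is immediate from $V\subseteq W$.

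For (3), fix $x\in[a,b]$. If $x\in f^{-1}(V)$, or if $x\notin f^{-1}(V)$ and $|f(\partial C_x)|=2$, then $\trunc{f}{V}(x)=f(x)$ and there is nothing to check. In the remaining case $f(\partial C_x)=\{v\}$, I have $\trunc{f}{V}(x)=v$ and I need $f(x)\overset{V}{=} v$. The key observation is that $f(\overline{C_x})$ is connected and meets $V$ only at $v$: indeed $f(C_x)\cap V=\emptyset$ by definition of $C_x$, and $f(\partial C_x)=\{v\}$. Since $f(x)$ and $v$ both lie in this connected subset of $[-1,1]$, the whole closed interval between them is contained in $f(\overline{C_x})$ and therefore meets $V$ only at $v$; so no element of $V$ lies strictly between $f(x)$ and $v$.

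For (4), from (3) I have $\trunc{f}{V}(x)\overset{V}{=} f(x)$, and from (3) applied with $W$ followed by (2) I have $f(x)\overset{V}{=}\trunc{f}{W}(x)$. If $f(x)\in V$ then $V\subseteq W$ forces both truncations to equal $f(x)$; otherwise (1) applied with middle element $f(x)\notin V$ yields $\trunc{f}{V}(x)\overset{V}{=}\trunc{f}{W}(x)$. For (5) I would argue by the intermediate value theorem: if some $v\in V$ were strictly between $f(x)$ and $f(y)$, continuity of $f$ would give some $t$ in the closed interval between $x$ and $y$ with $f(t)=v$; since $f(x),f(y)\neq v$ we have $t\neq x,y$, so $t$ lies strictly between $x$ and $y$ and belongs to $f^{-1}(V)$, contradicting the hypothesis $x\overset{f^{-1}(V)}{=\joinrel=} y$. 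Finally, (6) follows by chaining (5), (3), and (1): (5) gives $f(x)\overset{V}{=} f(y)$ and (3) gives $f(y)\overset{V}{=}\trunc{f}{V}(y)$; if $f(y)\in V$ then $\trunc{f}{V}(y)=f(y)$ and we are done, otherwise (1) with middle element $f(y)\notin V$ concludes. The only step that is not a formal transitivity-or-monotonicity manipulation is (3), and that is where I expect the proof to need care.
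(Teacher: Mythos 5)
Your proposal is correct and takes essentially the same approach as the paper: (1) and (2) are elementary, (3) is the core geometric fact proved by noting that the image of (the closure of) $C_x$ is a connected set meeting $V$ at most at $v$, (5) follows from connectedness/IVT applied to the image of the open interval between $x$ and $y$, and (4) and (6) are obtained by chaining the earlier items via the restricted transitivity in (1).
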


\begin{proof}
Claims (1) and (2) are straightforward and left to the reader.

For (3), let $x \in [a,b]$.  Suppose $\trunc{f}{V}(x) \neq f(x)$.  This means that $f(x) \notin V$, $f(\partial C_x) = \{v\}$ for some $v \in V$, and $\trunc{f}{V}(x) = v$.  The open interval with endpoints $f(x)$ and $v$ is contained in $f(C_x)$, which is disjoint from $V$ by definition.  Thus $f(x) \overset{V}{=} \trunc{f}{V}(x)$.

For (4), let $x \in [a,b]$.  If $f(x) \in V$, then $\trunc{f}{V}(x) = \trunc{f}{W}(x) = f(x)$.  Suppose then that $f(x) \notin V$.  By (3), $f(x) \overset{W}{=} \trunc{f}{W}(x)$, so $f(x) \overset{V}{=} \trunc{f}{W}(x)$ by (2).  Also by (3), $f(x) \overset{V}{=} \trunc{f}{V}(x)$.  Therefore by (1), $\trunc{f}{W}(x) \overset{V}{=} \trunc{f}{V}(x)$.

For (5), suppose $f^{-1}(V)$ is finite and $x \overset{f^{-1}(V)}{=\joinrel=} y$, and that $f(x) \neq f(y)$ (in particular $x \neq y$).  Let $U$ be the open interval with endpoints $x$ and $y$.  Then $U \cap f^{-1}(V) = \emptyset$, so $f(U) \cap V = \emptyset$.  Since the open interval with endpoints $f(x)$ and $f(y)$ is contained in $f(U)$, we conclude that $f(x) \overset{V}{=} f(y)$.

For (6), suppose $f^{-1}(V)$ is finite and $x \overset{f^{-1}(V)}{=\joinrel=} y$.  By (5), $f(x) \overset{V}{=} f(y)$.  If $f(y) \in V$, then $\trunc{f}{V}(y) = f(y)$, and we are done.  Suppose then that $f(y) \notin V$.  By (3), $f(y) \overset{V}{=} \trunc{f}{V}(y)$.  Therefore by (1), $f(x) \overset{V}{=} \trunc{f}{V}(y)$.
\end{proof}

The next Lemma will be applied in the setting where $t = t_{\trunc{f}{V}}$ is the radial contour factor of $\trunc{f}{V}$, but since the proof does not involve this specific factorization of $\trunc{f}{V}$, we state the result in a slightly more general form for simplicity.

\begin{lem}
\label{lem:f s t V-close}
Let $V \subset [-1,1]$ be a finite set with $0 \in V$, and let $f,t,s \colon [-1,1] \to [-1,1]$ be piecewise-linear maps such that $f(0) = t(0) = s(0) = 0$, $f$ and $t$ are nowhere constant, and $\trunc{f}{V} = t \circ s$.  If $x,y \in [-1,1]$ and $x \overset{f^{-1}(V)}{=\joinrel=} y$, then $s(x) \overset{t^{-1}(V)}{=\joinrel=} s(y)$.
\end{lem}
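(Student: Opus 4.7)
My plan is to argue by contradiction. Suppose $s(x) \not\overset{t^{-1}(V)}{=\joinrel=} s(y)$, so there exists some $u$ strictly between $s(x)$ and $s(y)$ with $u \in t^{-1}(V)$; write $v = t(u) \in V$. Without loss of generality assume $x < y$. By the intermediate value theorem applied to $s$ on $[x,y]$, there exists $z \in (x,y)$ with $s(z) = u$, and so $\trunc{f}{V}(z) = t(s(z)) = v \in V$. The goal is to show this forces either a point of $f^{-1}(V)$ strictly between $x$ and $y$ (contradicting the hypothesis) or $s$ to be constant on $[x,y]$ (contradicting the existence of $u$).

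I would split into two cases according to the definition of $\trunc{f}{V}$. If $f(z) \in V$, then $z$ is itself a point of $f^{-1}(V)$ strictly between $x$ and $y$, directly contradicting $x \overset{f^{-1}(V)}{=\joinrel=} y$. If instead $f(z) \notin V$, then by the definition of truncation we must have $f(\partial C_z) = \{v\}$, where $C_z$ is the component of $z$ in $[-1,1]\smallsetminus f^{-1}(V)$, and $\trunc{f}{V}\equiv v$ on all of $\overline{C_z}$. Now $(x,y)$ is a connected subset of $[-1,1]\smallsetminus f^{-1}(V)$ containing $z$ (using that no point of $f^{-1}(V)$ lies strictly between $x$ and $y$ and that $f(z)\notin V$), so $(x,y)\subseteq C_z$ and hence $[x,y]\subseteq \overline{C_z}$. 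Therefore $\trunc{f}{V}([x,y]) = \{v\}$, which gives $s([x,y])\subseteq t^{-1}(v)$.

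The key finishing move is that $t$ is nowhere constant and piecewise-linear, so $t^{-1}(v)$ is a finite set. Since $s([x,y])$ is connected and contained in a finite set, it must be a single point, forcing $s(x) = s(y)$. This contradicts the choice of $u$ strictly between $s(x)$ and $s(y)$, completing the proof.

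I do not expect any serious obstacle here; the proof is essentially a direct unpacking of the definition of truncation together with continuity. The one subtle point worth being careful about is ensuring that $z \in (x,y)$ strictly (not at the endpoints), which follows because $s(z)=u$ lies strictly between $s(x)$ and $s(y)$, and that in the second case $z \in C_z$ is consistent with $(x,y)\subseteq C_z$ because $z$ is not itself in $f^{-1}(V)$.
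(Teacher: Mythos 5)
Your proof is correct and is essentially the same argument as the paper's, merely reorganized as a proof by contradiction: both proofs rest on the observation that $(x,y)$ lies in a single component of $[-1,1]\smallsetminus f^{-1}(V)$, on which $\trunc{f}{V}$ is either equal to $f$ (handled by your first case, where $z\in f^{-1}(V)$ gives an immediate contradiction) or constant (handled by your second case), and both use that $t$ nowhere constant forces $s$ to be constant there. The paper presents this directly by splitting into the two cases for $\trunc{f}{V}$ on $(x,y)$ up front, whereas you arrive at the same dichotomy after choosing a witness $z$, but the content is identical.
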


\begin{proof}
Let $x,y \in [-1,1]$, and suppose $x \overset{f^{-1}(V)}{=\joinrel=} y$.  Assume without loss of generality that $x < y$.  Now $f((x,y)) \cap V = \emptyset$, and so either $\trunc{f}{V} = f$ on $(x,y)$, or $\trunc{f}{V}$ is constant on $(x,y)$.  If $\trunc{f}{V} = f$ on $(x,y)$, then $f = t \circ s$ on $(x,y)$, and it follows easily that $s((x,y))$, which contains the interval between $s(x)$ and $s(y)$ (if $s(x) \neq s(y)$), is disjoint from $t^{-1}(V)$, hence $s(x) \overset{t^{-1}(V)}{=\joinrel=} s(y)$.  If $\trunc{f}{V}$ is constant on $(x,y)$, then since $\trunc{f}{V} = t \circ s$ and $t$ is nowhere constant, it follows that $s$ is constant on $(x,y)$, and so $s(x) = s(y)$.
\end{proof}

\begin{lem}
\label{lem:mesh preim contour}
Let $f \colon [0,1] \to [-1,1]$ be a non-constant piecewise-linear map with $f(0) = 0$.  Let $V \subset [-1,1]$ be a finite set with $0 \in V$ and such that $\trunc{f}{V}$ is non-constant, and let $0 < \alpha_1 < \cdots < \alpha_n \leq 1$ be the contour points of $\trunc{f}{V}$.  Then
\[ \mesh \left( \left( t_{\trunc{f}{V}} \right)^{-1}(V) \right) \leq \frac{\mesh(V)}{n \cdot |f(\alpha_1)|} .\]
\end{lem}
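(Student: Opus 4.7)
My plan is to unpack the explicit piecewise-linear structure of $t := t_{\trunc{f}{V}}$ and reduce the bound to a slope computation on each affine piece. By definition of the contour factor, $t$ is piecewise linear on the partition $0 < 1/n < 2/n < \cdots < 1$ with $t(i/n) = \trunc{f}{V}(\alpha_i)$ for each $i = 0,1,\ldots,n$ (setting $\alpha_0 = 0$). By Lemma~\ref{lem:contour trunc}, $\trunc{f}{V}(\alpha_i) = f(\alpha_i) \in V$ for each $i \geq 1$, and trivially $f(\alpha_0) = 0 \in V$, so the entire lattice $\{i/n : 0 \leq i \leq n\}$ lies in $t^{-1}(V)$. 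Consequently,
\[ \mesh\bigl(t^{-1}(V)\bigr) = \max_{0 \leq i < n} \mesh\bigl(t^{-1}(V) \cap [i/n,(i+1)/n]\bigr). \]

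On the subinterval $[i/n,(i+1)/n]$ the map $t$ is affine with slope of absolute value $n\,|f(\alpha_{i+1}) - f(\alpha_i)|$. Inverting this affine map rescales distances on the $y$-axis by the factor $1/(n\,|f(\alpha_{i+1}) - f(\alpha_i)|)$, so consecutive elements of $V$ in the image interval $[\min\{f(\alpha_i),f(\alpha_{i+1})\},\max\{f(\alpha_i),f(\alpha_{i+1})\}]$ have preimages separated by at most $\mesh(V)/(n\,|f(\alpha_{i+1}) - f(\alpha_i)|)$. Hence the whole lemma reduces to showing that $|f(\alpha_{i+1}) - f(\alpha_i)| \geq |f(\alpha_1)|$ for every $i \in \{0,\ldots,n-1\}$.

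For $i = 0$ this is the equality $|f(\alpha_1) - f(\alpha_0)| = |f(\alpha_1)|$. For $i \geq 1$, the remark following the definition of the contour factor shows that consecutive contour points have opposite orientations, so $f(\alpha_i)$ and $f(\alpha_{i+1})$ have opposite signs and $|f(\alpha_{i+1}) - f(\alpha_i)| = |f(\alpha_i)| + |f(\alpha_{i+1})|$. A brief intermediate value argument shows that the $f$-values at positive departures of $f$ are strictly increasing in $x$ and the $f$-values at negative departures are strictly decreasing; this implies $|f(\alpha_j)| \geq |f(\alpha_1)|$ for every contour point $\alpha_j$ of $\trunc{f}{V}$ having the same orientation as $\alpha_1$. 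Since consecutive contour points alternate in orientation, exactly one of $\alpha_i,\alpha_{i+1}$ has the same orientation as $\alpha_1$, hence $|f(\alpha_i)| + |f(\alpha_{i+1})| \geq |f(\alpha_1)|$. The only step with any non-trivial content is this monotonicity-of-departure-values observation; everything else is a direct computation once one recognizes that the first affine piece of $t$ realizes the worst slope, namely $n|f(\alpha_1)|$.
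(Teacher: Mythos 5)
Your proof is correct and takes essentially the same approach as the paper: both establish that $t_{\trunc{f}{V}}(i/n) = f(\alpha_i) \in V$ and that each affine piece of $t_{\trunc{f}{V}}$ has slope magnitude at least $n\,|f(\alpha_1)|$ (via the alternating-sign, increasing-magnitude structure of contour-point values), and then convert this slope bound into the mesh estimate. The only cosmetic difference is that you decompose $\mesh(t^{-1}(V))$ directly as a max over the subintervals $[i/n,(i+1)/n]$, whereas the paper argues that any interval of the target length must meet $t^{-1}(V)$; these are interchangeable phrasings of the same computation.
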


\begin{proof}
Recall from Lemma~\ref{lem:contour trunc} that for each $i =1,\ldots,n$, $\trunc{f}{V}(\alpha_i) = f(\alpha_i) \in V$.

The map $t_{\trunc{f}{V}}$ is given as follows: $t_{\trunc{f}{V}}(0) = 0$, $t_{\trunc{f}{V}}(\frac{i}{n}) = f(\alpha_i)$ for each $i = 1,\ldots,n$, and $t_{\trunc{f}{V}}$ is linear in between these points.  Note that the values $f(\alpha_i)$, for odd $i \in \{1,\ldots,n\}$, have the same sign, and $0 < |f(\alpha_1)| < |f(\alpha_3)| < \cdots$.  Similarly, the values $f(\alpha_i)$, for even $i \in \{1,\ldots,n\}$ have the same sign, which is opposite that of the values of the odd-numbered contour points, and $0 < |f(\alpha_2)| < |f(\alpha_4)| < \cdots$.  Consequently, the magnitude of the slope of each linear section of $t_{\trunc{f}{V}}$ is greater than or equal to $\frac{|f(\alpha_1)|}{1/n} = n \cdot |f(\alpha_1)|$, i.e.\ the magnitude of the slope of the first linear section.

Let $A \subseteq [0,1]$ be any closed interval of length $L = \frac{\mesh(V)}{n \cdot |f(\alpha_1)|}$.  If $\frac{i}{n} \in A$ for some $i \in \{1,\ldots,n\}$, then since $t_{\trunc{f}{V}}(\frac{i}{n}) = f(\alpha_i) \in V$, we have $\left( t_{\trunc{f}{V}} \right)^{-1}(V) \cap A \neq \emptyset$.  Otherwise, $A$ is contained in one linear section of $t_{\trunc{f}{V}}$, and by the slope estimate above, the length of the interval $t_{\trunc{f}{V}}(A)$ is greater than or equal to $L \cdot n \cdot |f(\alpha_1)| = \mesh(V)$.  Therefore, $t_{\trunc{f}{V}}(A) \cap V \neq \emptyset$, hence in this case also, $\left( t_{\trunc{f}{V}} \right)^{-1}(V) \cap A \neq \emptyset$.  We conclude that $\mesh \left( \left( t_{\trunc{f}{V}} \right)^{-1}(V) \right) \leq \frac{\mesh(V)}{n \cdot |f(\alpha_1)|}$, as desired.
\end{proof}

\section{Approximation of bonding maps}
\label{sec:approx bonding maps}

In this section, we discuss a method for constructing alternative inverse limit representations of arc-like continua via suitable approximations of their bonding maps.

Let $f_k \colon [-1,1] \to [-1,1]$, $k \in \mathbb{N}$, be piecewise-linear maps with $f_k(0) = 0$ for all $k$.  For $k < \ell$ we denote $f_k^\ell = f_k \circ f_{k+1} \circ \cdots \circ f_{\ell-1}$.  In this notation, $f_k^k = \mathrm{id}$, $f_k = f_k^{k+1}$, and for $j \leq k \leq \ell$, $f_j^\ell = f_j^k \circ f_k^\ell$.

\begin{thm}[Special case of Mioduszewski's Theorem, \cite{mioduszewski1963}]
\label{thm:mioduszewski}
Let $X = \varprojlim \left \langle [-1,1],f_k \right \rangle$ and $Y = \varprojlim \left \langle [-1,1],g_k \right \rangle$.  If there exists a sequence $\langle \varepsilon_k \rangle_{k \geq 1}$ of positive numbers such that $\varepsilon_k \to 0$ as $k \to\infty$, and such that for every $j \leq k \leq \ell$,
\begin{enumerate}[label=(\arabic{*})]
\item \label{mio1} $f_j^\ell$ and $f_j^k \circ g_k^\ell$ are $\varepsilon_k$-close, and
\item \label{mio2} $g_j^\ell$ and $g_j^k \circ f_k^\ell$ are $\varepsilon_k$-close,
\end{enumerate}
then $X$ and $Y$ are homeomorphic.  In fact, a homeomorphism between $X$ and $Y$ is given by $\langle x_k \rangle_{k \geq 1} \mapsto \left \langle \lim_{\ell \to \infty} g_k^\ell(x_\ell) \right \rangle_{k \geq 1}$.  Consequently, if $f_k(0) = 0 = g_k(0)$ for all $k \geq 1$, then the homeomorphism sends $\langle 0,0,\ldots \rangle \in X$ to $\langle 0,0,\ldots \rangle \in Y$.
\end{thm}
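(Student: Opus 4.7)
The plan is to verify directly that the explicitly given formula $\Phi(\langle x_k \rangle) = \left\langle \lim_{\ell \to \infty} g_k^\ell(x_\ell) \right\rangle$ defines a continuous map $X \to Y$, and that the symmetric formula $\Psi(\langle y_k \rangle) = \left\langle \lim_{\ell \to \infty} f_k^\ell(y_\ell) \right\rangle$ defines a continuous inverse $Y \to X$. The two hypotheses should pair correctly: \ref{mio2} will make $\Phi$ well-defined, \ref{mio1} will make $\Psi$ well-defined, and then each hypothesis in turn will show that the relevant composition is the identity.

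First I would show the limits defining $\Phi$ exist. Fix $\langle x_k \rangle \in X$ and an index $k$. For any $k \leq \ell \leq m$ the inverse-system relation gives $x_\ell = f_\ell^m(x_m)$, so $g_k^\ell(x_\ell) = g_k^\ell \circ f_\ell^m(x_m)$. Applying \ref{mio2} with the triple $(k,\ell,m)$ in place of $(j,k,\ell)$ says this is $\varepsilon_\ell$-close to $g_k^m(x_m)$, and since $\varepsilon_\ell \to 0$ the sequence $\langle g_k^\ell(x_\ell) \rangle_{\ell \geq k}$ is Cauchy and converges to some $y_k \in [-1,1]$. The bound $\varepsilon_\ell$ is independent of $\langle x_k \rangle \in X$, so the convergence is uniform; hence each coordinate map $\langle x_k \rangle \mapsto y_k$ is a uniform limit of the continuous maps $\langle x_k \rangle \mapsto g_k^\ell(x_\ell)$ and is therefore continuous. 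Continuity of $g_k$ gives $g_k(y_{k+1}) = \lim_\ell g_k \circ g_{k+1}^\ell(x_\ell) = \lim_\ell g_k^\ell(x_\ell) = y_k$, so $\Phi$ really maps into $Y$, and $\Phi$ is continuous into the product topology.

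The symmetric argument using \ref{mio1} produces a continuous $\Psi : Y \to X$ by the analogous formula. To check $\Psi \circ \Phi = \mathrm{id}_X$, I fix $\langle x_k \rangle \in X$, set $y_k = \Phi(\langle x_k \rangle)_k$, and compute $\Psi(\langle y_k \rangle)_k = \lim_{m \to \infty} f_k^m(y_m)$. For each fixed $m \geq k$, continuity of $f_k^m$ together with the definition of $y_m$ gives $f_k^m(y_m) = \lim_{\ell \to \infty} f_k^m \circ g_m^\ell(x_\ell)$, and by \ref{mio1} (applied with $(k,m,\ell)$) each term on the right is $\varepsilon_m$-close to $f_k^\ell(x_\ell) = x_k$; hence $|f_k^m(y_m) - x_k| \leq \varepsilon_m$, and letting $m \to \infty$ yields $\Psi(\langle y_k \rangle)_k = x_k$. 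The identity $\Phi \circ \Psi = \mathrm{id}_Y$ is entirely analogous, using \ref{mio2} in place of \ref{mio1}.

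The only technical point that requires care is the index bookkeeping — verifying that the triples $(j,k,\ell)$ appearing in the hypotheses really match the ones needed at each step, and that the uniform estimates push through the continuous maps. I do not expect a genuine obstacle here, since the construction is just the standard diagram-chasing behind Mioduszewski's theorem, isolated in the simple case when all factor spaces are $[-1,1]$. For the final statement, if $f_k(0) = g_k(0) = 0$ for every $k$ then $g_k^\ell(0) = 0$ for all $k \leq \ell$, so $\Phi(\langle 0,0,\ldots \rangle) = \left\langle \lim_\ell g_k^\ell(0) \right\rangle = \langle 0,0,\ldots \rangle$, as claimed.
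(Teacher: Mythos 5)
Your proof is correct, and it takes a genuinely different route from the paper's. The paper quotes the general Mioduszewski theorem (with its machinery of subsequences $m_k, n_k$ and connecting maps $\sigma_k, \tau_k$) as a black box, specializes to $X_m = Y_n = [-1,1]$, $m_k = n_k = k$, $\sigma_k = \tau_k = \mathrm{id}$, and then deduces the explicit formula for the homeomorphism from the conclusion of that cited theorem. You instead prove the special case directly from scratch: you use hypothesis \ref{mio2} to show the limits defining $\Phi$ exist uniformly (hence $\Phi$ is continuous), check compatibility with the bonding maps so $\Phi$ lands in $Y$, define $\Psi$ symmetrically via \ref{mio1}, and then verify $\Psi \circ \Phi = \mathrm{id}_X$ and $\Phi \circ \Psi = \mathrm{id}_Y$ using \ref{mio1} and \ref{mio2} respectively. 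Your index bookkeeping is accurate throughout — in particular, $|g_k^\ell(x_\ell) - g_k^m(x_m)| \leq \varepsilon_\ell$ for $k \leq \ell \leq m$ is exactly the right Cauchy estimate, and the $\varepsilon_m$-closeness of $f_k^m(y_m)$ to $x_k$ for each fixed $m$ correctly yields $\Psi \circ \Phi = \mathrm{id}$ upon letting $m \to \infty$. What your approach buys is a self-contained argument that makes the paper logically independent of \cite{mioduszewski1963} for this step and makes it transparent why the stated formula is the homeomorphism; what the paper's approach buys is brevity and an explicit acknowledgment that this is an instance of a known general result. Both are sound.
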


We visually depict the scenario in Theorem~\ref{thm:mioduszewski} using an ``almost commutative'' diagram as in Figure~\ref{fig:mioduszewski diagram a}.  The presence of the $\varepsilon_k$'s in this diagram is meant as an abbreviation, suggesting that all of the diagrams of the forms represented in Figures~\ref{fig:mioduszewski diagram b} and \ref{fig:mioduszewski diagram c} are $\varepsilon_k$-commutative.

\begin{figure}
\begin{center}
\begin{subfigure}{\linewidth}
\begin{center}
\begin{tikzpicture}[>=stealth', scale=1.5]
\foreach \x [count=\xi] in {0,1.5,...,6} {
  \node (row1-\xi) at (\x,1) {$I$};
  \node (row2-\xi) at (\x,0) {$I$};
}
\node (row1-6) at (7.5,1) {$\cdots$};
\node (row2-6) at (7.5,0) {$\cdots$};

\foreach \x [remember=\x as \lastx (initially 1)] in {2,...,6} {
  \draw[->] (row1-\x) to node[above]{$f_{\lastx}$} (row1-\lastx);
  \draw[->] (row2-\x) to node[below]{$g_{\lastx}$} (row2-\lastx);
}

\foreach \x in {1,...,5}
  \draw[<->, dashed] (row1-\x)--(row2-\x);

\foreach \x [count=\xi] in {0.75,2.25,...,6.75}
  \node at (\x,0.5) {\small $\varepsilon_\xi$};
\end{tikzpicture}
\caption{}
\label{fig:mioduszewski diagram a}
\end{center}
\end{subfigure}\\

\begin{subfigure}{\linewidth}
\begin{center}
\begin{tikzpicture}[>=stealth', scale=1.5]
\node (row1-1) at (0,1) {$I$};
\node (row1-2) at (1,1) {$I$};
\node (row1-3) at (1.75,1) {$\cdots$};
\node (row1-4) at (2.5,1) {$I$};
\node (row1-5) at (3.5,1) {$I$};
\node (row1-6) at (4.5,1) {$I$};
\node (row1-7) at (5.25,1) {$\cdots$};
\node (row1-8) at (6,1) {$I$};
\node (row1-9) at (7,1) {$I$};

\draw[->] (row1-2) to node[above]{$f_j$} (row1-1);
\draw[->] (row1-3) to (row1-2);
\draw[->] (row1-4) to (row1-3);
\draw[->] (row1-5) to node[above]{$f_{k-1}$} (row1-4);
\draw[->] (row1-6) to node[above]{$f_k$} (row1-5);
\draw[->] (row1-7) to (row1-6);
\draw[->] (row1-8) to (row1-7);
\draw[->] (row1-9) to node[above]{$f_{\ell-1}$} (row1-8);

\node (row2-5) at (3.5,0) {$I$};
\node (row2-6) at (4.5,0) {$I$};
\node (row2-7) at (5.25,0) {$\cdots$};
\node (row2-8) at (6,0) {$I$};
\node (row2-9) at (7,0) {$I$};

\draw[->] (row2-6) to node[below]{$g_k$} (row2-5);
\draw[->] (row2-7) to (row2-6);
\draw[->] (row2-8) to (row2-7);
\draw[->] (row2-9) to node[below]{$g_{\ell-1}$} (row2-8);

\draw[<->, dashed] (row1-5)--(row2-5);
\draw[<->, dashed] (row1-9)--(row2-9);
\end{tikzpicture}
\caption{}
\label{fig:mioduszewski diagram b}
\end{center}
\end{subfigure}\\

\begin{subfigure}{\linewidth}
\begin{center}
\begin{tikzpicture}[>=stealth', scale=1.5]
\node (row1-5) at (3.5,1) {$I$};
\node (row1-6) at (4.5,1) {$I$};
\node (row1-7) at (5.25,1) {$\cdots$};
\node (row1-8) at (6,1) {$I$};
\node (row1-9) at (7,1) {$I$};

\draw[->] (row1-6) to node[above]{$f_k$} (row1-5);
\draw[->] (row1-7) to (row1-6);
\draw[->] (row1-8) to (row1-7);
\draw[->] (row1-9) to node[above]{$f_{\ell-1}$} (row1-8);

\node (row2-1) at (0,0) {$I$};
\node (row2-2) at (1,0) {$I$};
\node (row2-3) at (1.75,0) {$\cdots$};
\node (row2-4) at (2.5,0) {$I$};
\node (row2-5) at (3.5,0) {$I$};
\node (row2-6) at (4.5,0) {$I$};
\node (row2-7) at (5.25,0) {$\cdots$};
\node (row2-8) at (6,0) {$I$};
\node (row2-9) at (7,0) {$I$};

\draw[->] (row2-2) to node[below]{$g_j$} (row2-1);
\draw[->] (row2-3) to (row2-2);
\draw[->] (row2-4) to (row2-3);
\draw[->] (row2-5) to node[below]{$g_{k-1}$} (row2-4);
\draw[->] (row2-6) to node[below]{$g_k$} (row2-5);
\draw[->] (row2-7) to (row2-6);
\draw[->] (row2-8) to (row2-7);
\draw[->] (row2-9) to node[below]{$g_{\ell-1}$} (row2-8);

\draw[<->, dashed] (row1-5)--(row2-5);
\draw[<->, dashed] (row1-9)--(row2-9);
\end{tikzpicture}
\caption{}
\label{fig:mioduszewski diagram c}
\end{center}
\end{subfigure}
\end{center}

\caption{First picture: diagram depicting, in an abbreviated way, the conditions of Theorem~\ref{thm:mioduszewski}.  Here, $I$ denotes the interval $[-1,1]$, and dashed arrows are identities.  Second and third pictures: the explicit $\varepsilon_k$-commutative diagrams corresponding to conditions \ref{mio1} and \ref{mio2} of Theorem~\ref{thm:mioduszewski}.}
\label{fig:mioduszewski diagram}
\end{figure}
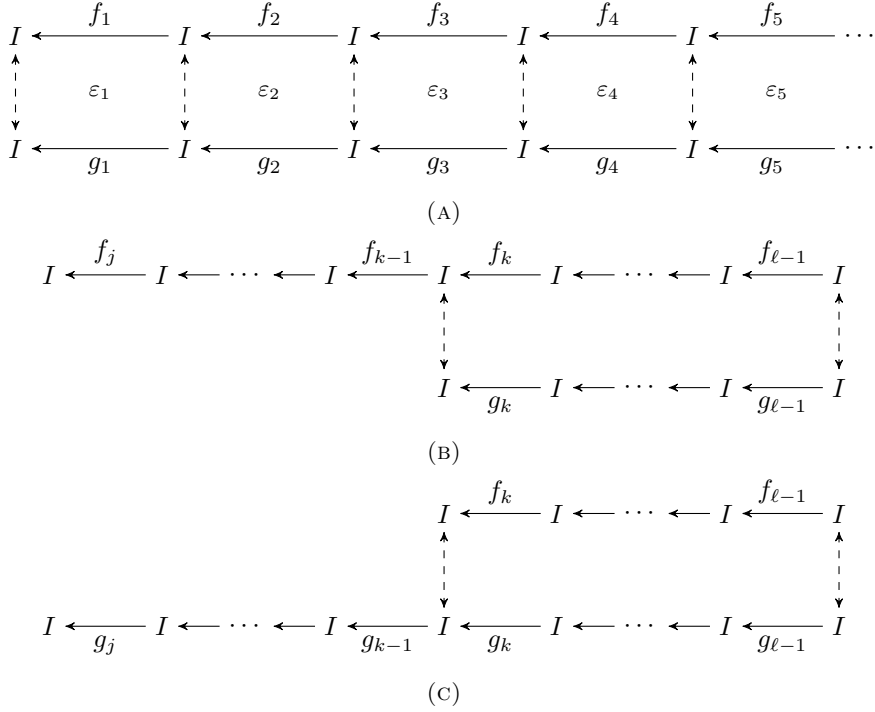

\begin{proof}
In the notation established here, the theorem of Mioduszewski \cite[Theorem~4]{mioduszewski1963} states:

\medskip
Let $\langle X_m,f_m \rangle$ and $\langle Y_n,g_n \rangle$ be inverse systems of polyhedra, let $X = \varprojlim \left \langle X_m,f_m \right \rangle$, $Y = \varprojlim \left \langle Y_n,g_n \right \rangle$, and let $\varepsilon_k > 0$ be numbers such that $\varepsilon_k \to 0$.  Suppose there exist sequences $\langle m_k \rangle_{k=1}^\infty$ and $\langle n_k \rangle_{k=1}^\infty$ and maps $\sigma_k \colon X_{m_k} \to Y_{n_k}$ for each odd $k$ and maps $\tau_k \colon Y_{n_k} \to X_{m_k}$ for each even $k$, such that for every $j \leq k \leq \ell$,
\begin{enumerate}[label=(M\arabic{*})]
\item \label{orig mio1} if $k$ is odd and $\ell$ is even, then $g_{n_j}^{n_\ell}$ and $g_{n_j}^{n_k} \circ \sigma_k \circ f_{m_k}^{m_\ell} \circ \tau_\ell$ and are $\varepsilon_k$-close,
\item \label{orig mio2} if $k$ is even and $\ell$ is odd, then $f_{m_j}^{m_\ell}$ and $f_{m_j}^{m_k} \circ \tau_k \circ g_{n_k}^{n_\ell} \circ \sigma_\ell$ and are $\varepsilon_k$-close,
\item \label{orig mio3} if both $k$ and $\ell$ are even, then $f_{m_j}^{m_\ell} \circ \tau_\ell$ and $f_{m_j}^{m_k} \circ \tau_k \circ g_{n_k}^{n_\ell}$ are $\varepsilon_k$-close, and
\item \label{orig mio4} if both $k$ and $\ell$ are odd, then $g_{n_j}^{n_\ell} \circ \sigma_\ell$ and $g_{n_j}^{n_k} \circ \sigma_k \circ f_{m_k}^{m_\ell}$ are $\varepsilon_k$-close.
\end{enumerate}
Then $X \approx Y$, and, in fact, there exists a homeomorphism $\sigma \colon X \to Y$ such that for every $k \leq \ell$,
\begin{itemize}
\item if $\ell$ is odd, then $g_{n_k}^{n_\ell} \circ \sigma_\ell \circ \pi_{X,m_\ell}$ and $\pi_{Y,n_k} \circ \sigma$ are $\varepsilon_\ell$-close, and
\item if $\ell$ is even, then $f_{m_k}^{m_\ell} \circ \tau_\ell \circ \pi_{Y,n_\ell}$ and $\pi_{X,m_k} \circ \sigma^{-1}$ are $\varepsilon_\ell$-close,
\end{itemize}
where $\pi_{X,m} \colon X \to X_m$ and $\pi_{Y,n} \colon Y \to Y_n$ denote the $m$-th and $n$-th coordinate projections, respectively.
\medskip

We apply this Theorem in the present setting, where $X_m = Y_n = [-1,1]$ for each $m$ and $n$, $m_k = n_k = k$ for each $k$, and $\sigma_k = \tau_k = \mathrm{id}$ for each $k$.  Clearly the conditions \ref{mio1} and \ref{mio2} of the present Theorem imply conditions \ref{orig mio1}--\ref{orig mio4} above.  We conclude that $X \approx Y$, and moreover there exists a homeomorphism $\sigma \colon X \to Y$ such that for every $k$ and every odd $\ell \geq k$, $g_k^\ell \circ \pi_{X,\ell}$ and $\pi_{Y,k} \circ \sigma$ are $\varepsilon_\ell$-close.  In other words, for any $\langle x_i \rangle_{i \geq 1} \in X$, for any odd $\ell \geq k$, the distance between the $k$-th coordinate of $\sigma \left( \langle x_i \rangle_{i \geq 1} \right)$ and $g_k^\ell(x_\ell)$ is less than $\varepsilon_\ell$.  Also, given $k \leq \ell \leq \ell'$, the distance between $g_k^\ell(x_\ell) = g_k^\ell \circ f_\ell^{\ell'}(x_{\ell'})$ and $g_k^{\ell'}(x_{\ell'})$ is less than $\varepsilon_\ell$ by condition \ref{mio2}.  Thus, since $\varepsilon_\ell \to 0$, $\langle g_k^\ell(x_\ell) \rangle_{\ell \geq k}$ is a Cauchy sequence, and so $\lim_{\ell \to \infty} g_k^\ell(x_\ell)$ exists.  We deduce that $\sigma \left( \langle x_k \rangle_{k \geq 1} \right) = \left \langle \lim_{\ell \to \infty} g_k^\ell(x_\ell) \right \rangle_{k \geq 1}$.
\end{proof}

The next Lemma is one application of Theorem~\ref{thm:mioduszewski}.  It will be used to dispense with a simple case at the beginning of the proof of Theorem~\ref{thm:main} in the next section, so that the rest of that proof can focus on the non-trivial case where the theory of radial departures and radial contour factors applies.  We will also use Theorem~\ref{thm:mioduszewski} later in the proof of Theorem~\ref{thm:main}.

\begin{lem}
\label{lem:nearly constant half}
Let $f_i \colon [-1,1] \to [-1,1]$, $i \in \mathbb{N}$, be maps with $f_i(0) = 0$ for all $i \in \mathbb{N}$.  Suppose that for infinitely many $i \in \mathbb{N}$, the following property holds:
\begin{enumerate}[label=($\dagger$)]
\item \label{converge constant half} for each $\varepsilon > 0$, there exists $n > i$ such that $|f_i^n(x)| < \varepsilon$ for all $x \in [0,1]$.
\end{enumerate}
Then there exist maps $g_k \colon [-1,1] \to [-1,1]$, $k \in \mathbb{N}$, such that $g_k {\restriction}_{[0,1]}$ is $0$ (constant map) for each $k \in \mathbb{N}$, and such that the inverse limits $X = \varprojlim \left \langle [-1,1],f_i \right \rangle$ and $Y = \varprojlim \left \langle [-1,1],g_k \right \rangle$ are homeomorphic.  In fact, there exists a homeomorphism which maps the point $\langle 0,0,\ldots \rangle \in X$ to $\langle 0,0,\ldots \rangle \in Y$.
\end{lem}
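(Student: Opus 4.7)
The approach is to apply Mioduszewski's theorem (Theorem~\ref{thm:mioduszewski}). I will pass to a subsystem determined by a subsequence $j_1 < j_2 < \cdots$ and write $G_k = f_{j_k}^{j_{k+1}}$, so that $X \approx \varprojlim \langle [-1,1], G_k \rangle$ in the standard way. Then define
\[ g_k(x) = \begin{cases} G_k(x) & \text{if } x \in [-1, 0], \\ 0 & \text{if } x \in [0, 1], \end{cases} \]
which is continuous because $G_k(0) = 0$ and satisfies $g_k {\restriction}_{[0,1]} \equiv 0$. The task then reduces to choosing the subsequence so that $\langle [-1,1], G_k \rangle$ and $\langle [-1,1], g_k \rangle$ satisfy the hypotheses of Theorem~\ref{thm:mioduszewski}.

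A preliminary observation I would establish is that property $(\dagger)$ propagates downward in $i$: if $(\dagger)$ holds at $i$, then continuity of $f_{i-1}$ at $0$ (with $f_{i-1}(0) = 0$) converts any bound $|f_i^n([0,1])| < \eta$ into $|f_{i-1}^n([0,1])| < \varepsilon$ via $f_{i-1}^n = f_{i-1} \circ f_i^n$. Iterating backward from arbitrarily large $(\dagger)$-indices, $(\dagger)$ holds at every $i \in \mathbb{N}$, so I may apply it at any index when building the subsequence.

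I would build the subsequence inductively. Set $j_1 = 1$. Given $j_1 < \cdots < j_k$ and the moduli of continuity $\omega_1, \ldots, \omega_{k-1}$ of $G_1, \ldots, G_{k-1}$, choose $\delta_k > 0$ small enough that
\[ \omega_j(\omega_{j+1}(\cdots \omega_{k-1}(\delta_k) \cdots)) < 2^{-k} \quad \text{for every } 1 \leq j \leq k, \]
which is possible because each $\omega_\ell$ is continuous with $\omega_\ell(0) = 0$; then use $(\dagger)$ at $j_k$ to pick $j_{k+1} > j_k$ with $|f_{j_k}^{j_{k+1}}([0,1])| < \delta_k$. Unwinding the nested moduli yields $|G_j^m([0,1])| < 2^{-(m-1)}$ for all $j < m$.

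Finally, a direct analysis of the recursion $g_k^\ell = g_k \circ \cdots \circ g_{\ell-1}$ shows that $g_k^\ell(x) = G_k^\ell(x)$ when $G_m^\ell(x) \leq 0$ for every $k+1 \leq m \leq \ell$, and $g_k^\ell(x) = 0$ otherwise. In the first case $G_j^k(g_k^\ell(x))$ coincides with $G_j^\ell(x)$; in the second case some $m^* > k$ satisfies $G_{m^*}^\ell(x) \in (0,1]$, so the discrepancy equals $|G_j^\ell(x)| = |G_j^{m^*}(G_{m^*}^\ell(x))| \leq |G_j^{m^*}([0,1])| < 2^{-(m^*-1)} \leq 2^{-k}$. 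Condition~\ref{mio2} of Theorem~\ref{thm:mioduszewski} reduces to the same bound by a symmetric analysis, so the theorem applies with $\varepsilon_k = 2^{-k}$ and produces the required homeomorphism, which maps $\langle 0,0,\ldots\rangle$ to $\langle 0,0,\ldots\rangle$ because every bonding map fixes $0$. The main obstacle is arranging uniform control of $|G_j^m([0,1])|$ in both $j \leq k$ and $m > k$, and this is exactly what the recursive choice of $\delta_k$ in terms of the already-determined moduli of continuity accomplishes.
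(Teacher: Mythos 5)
Your proposal is correct and takes essentially the same approach as the paper: both pass to a composed subsequence $G_k$ of bonding maps, define $g_k$ to agree with $G_k$ on $[-1,0]$ and vanish on $[0,1]$, and verify Mioduszewski's criterion by observing that the discrepancy between $g_j^\ell$ and $G_j^\ell$ at a point $x$ is controlled by $\lvert G_j^{m^*}([0,1])\rvert$ for the largest $m^*>k$ at which the orbit enters $(0,1]$, which the recursive choice of the subsequence makes small. Your use of explicit moduli of continuity and the preliminary downward-propagation observation about $(\dagger)$ are harmless stylistic variants of the paper's "choose $i_\ell$ so large that $\lvert F_j^\ell(x)\rvert<\varepsilon_k$" step, which instead picks a fresh $(\dagger)$-index $i$ between $i_{\ell-1}$ and $i_\ell$ at each stage.
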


\begin{proof}
Fix any sequence $\langle \varepsilon_k \rangle_{k \geq 1}$ of positive numbers such that $\varepsilon_k \to 0$ as $k \to\infty$ (e.g.\ let $\varepsilon_k = \frac{1}{k}$).

We will construct by recursion an increasing sequence $\langle i_k \rangle_{k=1}^\infty$ of positive integers, and maps $g_k \colon [-1,1] \to [-1,1]$, $k = 1,2,\ldots$, with properties described below.  For each $k$, denote by $F_k$ the composition $f_{i_k}^{i_{k+1}}$.  The properties we will require are that $g_k {\restriction}_{[0,1]}$ is $0$ (constant map) for each $k$, and for every $j \leq k \leq \ell$,
\begin{enumerate}
\item $F_j^\ell$ and $F_j^k \circ g_k^\ell$ are $\varepsilon_k$-close; and
\item $g_j^\ell$ and $g_j^k \circ F_k^\ell$ are $\varepsilon_k$-close.
\end{enumerate}

To begin, let $i_1 = 1$.

Let $\ell > 1$, and suppose that $i_k$ has been defined for all $1 \leq k < \ell$, and $g_k$ has been defined for all $1 \leq k < \ell-1$.  Let $i \geq i_{\ell-1}$ be such that \ref{converge constant half} holds, and choose $i_\ell > i$ such that for all $x \in [0,1]$, $|f_i^{i_\ell}(x)|$ is so small that
\[ |F_j^\ell(x)| < \varepsilon_k \textrm{ for each } 1 \leq j \leq k < \ell .\]
Recall that $F_j^\ell = F_j \circ \cdots \circ F_{\ell-1} = f_{i_j}^{i_{j+1}} \circ \cdots \circ f_{i_{\ell-1}}^{i_\ell} = f_{i_j}^{i_\ell} = f_{i_j}^i \circ f_i^{i_\ell}$.  Define $g_{\ell-1} \colon [-1,1] \to [-1,1]$ by
\[ g_{\ell-1}(x) = \begin{cases}
F_{\ell-1}^\ell(x) & \textrm{if } x \in [-1,0] \\
0 & \textrm{if } x \in [0,1] .
\end{cases} \]

Let $x \in [-1,1]$.  Given $1 \leq k_0 < \ell$, if $F_k^\ell(x) \in [-1,0]$ for each $k_0 < k \leq \ell$, then $g_k^\ell(x) = F_k^\ell(x)$ for each $k_0 \leq k \leq \ell$.  In particular, if $F_k^\ell(x) \in [-1,0]$ for each $1 < k \leq \ell$, then for any $j \leq k \leq \ell$, $F_j^\ell(x) = F_j^k \circ g_k^\ell(x)$ and $g_j^\ell(x) = g_j^k \circ F_k^\ell(x)$.

Suppose now that $F_{k_0}^\ell(x) \in [0,1]$ for some $1 < k_0 \leq \ell$, and let $k_0$ be the maximal such integer.  Let $j \leq k \leq \ell$.  If $k_0 \leq k$, then again $g_k^\ell(x) = F_k^\ell(x)$, and so $F_j^\ell(x) = F_j^k \circ g_k^\ell(x)$ and $g_j^\ell(x) = g_j^k \circ F_k^\ell(x)$ as above.  If $k < k_0$, then
\[ g_k^\ell(x) = g_k^{k_0-1} \circ g_{k_0-1} \circ g_{k_0}^\ell(x) = g_k^{k_0-1} \circ g_{k_0-1} \circ F_{k_0}^\ell(x) = g_k^{k_0-1}(0) = 0 \]
and so also $g_j^\ell(x) = g_j^k \circ g_k^\ell(x) = g_j^k(0) = 0$ and $F_j^k \circ g_k^\ell(x) = F_j^k(0) = 0$.  By construction, $|F_j^\ell(x)| = |F_j^{k_0} \circ F_{k_0}^\ell(x)| < \varepsilon_k$, since $F_{k_0}^\ell(x) \in [0,1]$.  Also, $g_j^k \circ F_k^\ell(x)$ is either $0$ or equal to $F_j^\ell(x)$, hence $|g_j^k \circ F_k^\ell(x)| < \varepsilon_k$ as well.  We have now established (1) and (2) in all cases.  This completes the recursive construction.

We point out that $X = \varprojlim \left \langle [-1,1],f_i \right \rangle$ is homeomorphic to $\varprojlim \left \langle [-1,1],F_k \right \rangle$, as the maps $F_k$ are compositions of the bonding maps $f_i$.  In light of the properties (1) and (2) above, the conclusion of the Lemma now follows from Theorem~\ref{thm:mioduszewski}.
\end{proof}

\section{Proof of the Main Theorem}
\label{sec:main proof}

In this section we prove the main theorem of this paper, Theorem~\ref{thm:main}.  We begin with some technical lemmas.

Recall that, given a map $f \colon [-1,1] \to [-1,1]$ with $f(0) = 0$ and a finite set $V \subset [-1,1]$ with $0 \in V$, $\trunc{f}{V}$ has a well-defined radial contour factor if the restrictions $\trunc{f}{V} {\restriction}_{[0,1]}$ and $\trunc{f}{V} {\restriction}_{[-1,0]}$ are non-constant, or equivalently, if both of the sets $f([0,1]) \cap V \smallsetminus \{0\}$ and $f([-1,0]) \cap V \smallsetminus \{0\}$ are non-empty.

\begin{lem}
\label{lem:well-def preimage}
Let $f,g \colon [-1,1] \to [-1,1]$ be piecewise-linear maps with $f(0) = g(0) = 0$.  Let $V \subset [-1,1]$ be a finite set with $0 \in V$, and suppose $V' := f^{-1}(V)$ is finite (e.g.\ if $f$ is nowhere constant).  If $\trunc{f \circ g}{V}$ has well-defined radial contour factor, then $\trunc{g}{V'}$ has well-defined radial contour factor.
\end{lem}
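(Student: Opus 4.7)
The plan is to unpack the definition of ``well-defined radial contour factor'' on both sides and then chase witnesses. Recall from Section~\ref{sec:trunc} that $\trunc{h}{W}$ has a well-defined radial contour factor precisely when the restrictions $\trunc{h}{W}{\restriction}_{[0,1]}$ and $\trunc{h}{W}{\restriction}_{[-1,0]}$ are both non-constant, which is equivalent to the two set conditions
\[ h([0,1]) \cap W \smallsetminus \{0\} \neq \emptyset \qquad \text{and} \qquad h([-1,0]) \cap W \smallsetminus \{0\} \neq \emptyset . \]
So the hypothesis gives these for $h = f \circ g$ and $W = V$, and the goal is to establish them for $h = g$ and $W = V'$.

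First I would observe that $V' = f^{-1}(V)$ is finite by assumption, and $0 \in V'$ since $f(0) = 0 \in V$, so the expression $\trunc{g}{V'}$ makes sense as soon as we know $g([-1,1]) \cap V' \neq \emptyset$. The two set conditions above, applied to the right and left halves separately, will give precisely this (and more).

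Next, pick $x^{+} \in [0,1]$ with $f(g(x^{+})) \in V \smallsetminus \{0\}$. Set $y^{+} = g(x^{+})$. Then $y^{+} \in f^{-1}(V) = V'$ directly from the choice of $x^{+}$. Moreover $y^{+} \neq 0$: since $f(0) = 0$ and $f(y^{+}) = f(g(x^{+})) \neq 0$, the point $y^{+}$ cannot equal $0$. Hence $y^{+} \in V' \smallsetminus \{0\}$ and $y^{+} \in g([0,1])$, so $g([0,1]) \cap V' \smallsetminus \{0\} \neq \emptyset$. The identical argument applied to $x^{-} \in [-1,0]$ with $f(g(x^{-})) \in V \smallsetminus \{0\}$ yields $g([-1,0]) \cap V' \smallsetminus \{0\} \neq \emptyset$. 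Together, these say exactly that $\trunc{g}{V'}$ has a well-defined radial contour factor.

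There is no real obstacle here: the lemma is essentially a bookkeeping check that the nondegeneracy on each side of $0$ transfers from the composition $f \circ g$ down to $g$ along the preimage $V' = f^{-1}(V)$. The only delicate point is making sure the witness $y^{+}$ is genuinely nonzero, which is handled by the observation that $f^{-1}(0) \ni 0$ forces $f(y^+) \neq 0 \Rightarrow y^+ \neq 0$; the hypothesis that $f(g(x^+)) \in V \smallsetminus \{0\}$ (not just $V$) is what makes this work.
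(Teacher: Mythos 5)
Your proof is correct and follows the same approach as the paper; the paper's proof simply says the conclusion ``follows immediately'' from the hypothesis, and your argument fills in exactly the elementary witness-chasing (including the observation that $f(0)=0$ forces the chosen $y^{\pm}$ to be nonzero) that justifies that claim.
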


\begin{proof}
Suppose that $\trunc{f \circ g}{V}$ has well-defined radial contour factor, meaning that the sets $f \circ g([0,1]) \cap V \smallsetminus \{0\}$ and $f \circ g([-1,0]) \cap V \smallsetminus \{0\}$ are non-empty.  It follows immediately that the sets $g([0,1]) \cap V' \smallsetminus \{0\}$ and $g([-1,0]) \cap V' \smallsetminus \{0\}$ are non-empty.  Thus, $\trunc{g}{V'}$ has well-defined radial contour factor.
\end{proof}

Lemma~\ref{lem:V J} below is technically stronger than the following Lemma~\ref{lem:same trunc contour}, because it can be argued from Lemma~\ref{lem:compare trunc} that if $V \subseteq W$ and $\trunc{f}{W}$ and $\trunc{g}{W}$ have the same radial contour factor, then $\trunc{f}{V}$ and $\trunc{g}{V}$ have the same radial contour factor as well.  However, we include Lemma~\ref{lem:same trunc contour} anyway for its simplicity, and we use it in the proof of Lemma~\ref{lem:V J}, and again in the proof of Theorem~\ref{thm:main} below.

\begin{lem}
\label{lem:same trunc contour}
Let $\mathcal{F}$ be an infinite family of piecewise-linear maps from $[-1,1]$ to $[-1,1]$ with $f(0) = 0$ for each $f \in \mathcal{F}$, and let $V \subset [-1,1]$ be a finite set with $0 \in V$.  Suppose that $\trunc{f}{V}$ has well-defined radial contour factor for all $f \in \mathcal{F}$.  There exists an infinite set $\mathcal{F}' \subseteq \mathcal{F}$ such that all of the maps $\trunc{f}{V}$, $f \in \mathcal{F}'$, have the same radial contour factor.
\end{lem}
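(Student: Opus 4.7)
My plan is to show that the set of possible radial contour factors $\{t_{\trunc{f}{V}} : f \in \mathcal{F}\}$ is actually \emph{finite}; the conclusion then follows immediately from the pigeonhole principle applied to the infinite family $\mathcal{F}$.

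To see that there are only finitely many possibilities for $t_{\trunc{f}{V}}$, I would invoke Lemma~\ref{lem:contour trunc}: each contour point $\alpha$ of $\trunc{f}{V} {\restriction}_{[0,1]}$ (respectively $\trunc{f}{V} {\restriction}_{[-1,0]} \circ \mathsf{r}$) satisfies $\trunc{f}{V}(\alpha) = f(\alpha) \in V$. By definition, the radial contour factor $t_{\trunc{f}{V}}$ is the piecewise linear map with break points at $0$ and $\pm i/n^\pm$ on each side, taking values $f(\alpha_i)$ and $f(\beta_i)$ there (where $0<\alpha_1<\cdots<\alpha_{n^+}$ are the right contour points and $0>\beta_1>\cdots>\beta_{n^-}$ are the left ones). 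So $t_{\trunc{f}{V}}$ is completely determined by the integers $n^+, n^-$ together with two finite sequences of elements of $V$.

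The remaining step is to bound $n^+$ and $n^-$. Here the key observation (made already in the proof of Lemma~\ref{lem:mesh preim contour}) is that consecutive contour points alternate in orientation, and contour points of the same orientation have strictly increasing magnitude under $\trunc{f}{V}$: indeed, if $\alpha_i < \alpha_{i+2}$ are both positively oriented right contour points, then the departure condition forces $f(\alpha_{i+2}) \notin f([0,\alpha_{i+2}))$, while $f(\alpha_i) \in f([0,\alpha_{i+2}))$, so $f(\alpha_{i+2}) > f(\alpha_i)$. Since the positive values $f(\alpha_i)$ (odd $i$) and the negative values $f(\alpha_i)$ (even $i$) are strictly monotone sequences in the finite set $V$, we get $n^+ \leq 2|V|$, and similarly $n^- \leq 2|V|$. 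Hence the number of possible pairs of sequences $\langle f(\alpha_i) \rangle$, $\langle f(\beta_i) \rangle$ is finite, the set $\{t_{\trunc{f}{V}} : f \in \mathcal{F}\}$ is finite, and pigeonhole produces the desired infinite subfamily $\mathcal{F}'$.

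I do not anticipate any real obstacle here; the content of the proof is just assembling the combinatorial observation that contour data for $\trunc{f}{V}$ consists of bounded-length sequences in the finite set $V$, and applying pigeonhole. The only mild care needed is to treat the two halves $[0,1]$ and $[-1,0]$ symmetrically when quoting the definition of the radial contour factor.
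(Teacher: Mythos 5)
Your proposal is correct and takes essentially the same approach as the paper: apply Lemma~\ref{lem:contour trunc} to conclude that contour points of $\trunc{f}{V}$ take values in the finite set $V$, deduce that only finitely many radial contour factors are possible, and finish by pigeonhole. Your writeup just spells out the (implicit in the paper) bound on the number of contour points via the alternating-sign, strictly-increasing-magnitude observation, which the paper records separately in the proof of Lemma~\ref{lem:mesh preim contour}.
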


\begin{proof}
According to Lemma~\ref{lem:contour trunc}, all of the radial contour points of the maps $\trunc{f}{V}$, $f \in \mathcal{F}$, have images in the set $V$.  Since $V$ is finite, this means that there are only finitely many possibilities for the radial contour factors of these maps.  The Lemma follows immediately by the pigeonhole principle.
\end{proof}

\begin{lem}
\label{lem:V J}
Let $\mathcal{F}$ be an infinite family of piecewise-linear maps from $[-1,1]$ to $[-1,1]$ with $f(0) = 0$ for each $f \in \mathcal{F}$, let $V \subset [-1,1]$ be a finite set with $0 \in V$, and let $\delta > 0$.  Suppose that $\trunc{f}{V}$ has well-defined radial contour factor for all $f \in \mathcal{F}$.  Then there exists a finite set $W \subset [-1,1]$ with $V \subseteq W$, and an infinite set $\mathcal{F}' \subseteq \mathcal{F}$, such that:
\begin{enumerate}
\item All of the maps $\trunc{f}{W}$, $f \in \mathcal{F}'$, have the same radial contour factor $t$; and
\item $\mesh \left( t^{-1}(W) \right) < \delta$.
\end{enumerate}
\end{lem}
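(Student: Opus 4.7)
The plan is to apply Lemma~\ref{lem:same trunc contour} twice, interleaved with a careful choice of $W$. First, apply Lemma~\ref{lem:same trunc contour} to $\mathcal{F}$ and $V$ to obtain an infinite $\mathcal{F}_0 \subseteq \mathcal{F}$ on which all $\trunc{f}{V}$ share a common radial contour factor $t^*$. Compute $s^* > 0$, the minimum absolute slope of the piecewise-linear map $t^*$; concretely, $s^* = \min(n^{*,r}|v^{*,r}_1|, n^{*,l}|v^{*,l}_1|)$, where $n^{*,r}, n^{*,l}$ are the right/left contour counts of $t^*$ and $v^{*,r}_1, v^{*,l}_1$ are its first right/left contour values.

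Next, choose a finite set $W \supseteq V$ with $\mesh(W) < \delta s^*$; this can always be arranged by adding enough points to $V$. Apply Lemma~\ref{lem:same trunc contour} again, now to $\mathcal{F}_0$ and $W$, extracting an infinite $\mathcal{F}' \subseteq \mathcal{F}_0$ on which all $\trunc{f}{W}$ share a common radial contour factor $t$. This $W$ and $\mathcal{F}'$ will be the ones asserted by the Lemma, establishing condition (1).

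To verify (2), namely $\mesh(t^{-1}(W)) < \delta$, I combine two complementary bounds on each half of $[-1,1]$: the grid bound $\mesh(t^{-1}(W) \cap [0,1]) \leq 1/n^r$ (since $t$ takes $W$-values at each node $k/n^r$ for $k = 0, \ldots, n^r$), together with the slope bound $\mesh(W)/(n^r|v_1|)$ from Lemma~\ref{lem:mesh preim contour}, where $v_1$ is the first right contour value of $t$. Using Lemma~\ref{lem:compare trunc}(5), the identity $t^* = t_{\trunc{t}{V}}$ holds, tying the contour structure of $t$ to that of $t^*$. In the favorable case where $t$ has the same number of contour points as $t^*$ on each side, Lemma~\ref{lem:refine trunc contours} gives $|v_1| \geq |v^{*,r}_1|$, so the slope bound collapses to $\mesh(W)/s^* < \delta$, as desired.

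The main obstacle, which I expect will require the most care, is the unfavorable case in which $t$ acquires new contour points of magnitude strictly less than $|v^{*,r}_1|$: these could drive the first slope of $t$ below $s^*$ and invalidate the simple slope bound. I anticipate handling this by iterating the construction—enlarging $W$ once more to absorb the new contour values, re-pigeonholing via Lemma~\ref{lem:same trunc contour}, and repeating. Termination should follow from the combined observation that at each iteration either $n^r + n^l$ of the common $t$ strictly increases (capped by the threshold that once $\min(n^r, n^l) > 1/\delta$ the grid bound alone finishes the proof) or the contour structure stabilizes so that Lemma~\ref{lem:refine trunc contours} forces the slope bound to apply.
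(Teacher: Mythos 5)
Your approach tracks the paper's proof quite closely: both pigeonhole via Lemma~\ref{lem:same trunc contour} to get a common radial contour factor, both invoke Lemma~\ref{lem:mesh preim contour} for the slope bound, and both use Lemma~\ref{lem:refine trunc contours} to control how the first contour value behaves under refinement. You have also correctly identified the central difficulty, namely that passing from $V$ to $W$ can introduce new contour points of small magnitude, which destroys the na\"ive slope bound $\mesh(W)/s^*$.

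Where your proposal diverges is the termination argument, and this is where there is a real gap. You say that at each iteration ``either $n^r + n^l$ strictly increases (capped by the threshold that once $\min(n^r,n^l) > 1/\delta$ the grid bound alone finishes) or the contour structure stabilizes.'' But $\min(n^r,n^l) > 1/\delta$ does not bound $n^r + n^l$: if $n^r$ increases unboundedly while $n^l$ stays fixed at $1$, then $n^r + n^l$ grows forever yet $\min(n^r,n^l) = 1$ never exceeds $1/\delta$, and the contour structure has not ``stabilized'' either, so neither branch of your dichotomy fires. The fix is to analyze the two halves of $[-1,1]$ separately, and the clean way to do that is the paper's: fix in advance a nested sequence $V \subseteq V_1 \subset V_2 \subset \cdots$ with $\mesh(V_k) \to 0$, extract nested $\mathcal{F}_1 \supseteq \mathcal{F}_2 \supseteq \cdots$ with common contour factors $t_k$, and observe that (per side) the number of contour points of $t_k$ is monotone non-decreasing by Lemma~\ref{lem:refine trunc contours}. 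Then for each side independently: if that count is unbounded, the grid bound (your $1/n$ argument) gives mesh $< \delta$ for large $k$; if bounded, it is eventually constant at some $N$, the first contour value $|t_k(1/N)|$ is eventually non-decreasing by Lemma~\ref{lem:refine trunc contours}, and the slope bound $\mesh(V_k)/(N\,|t_k(1/N)|) \to 0$. Taking $k$ large enough for both sides finishes the proof. Your adaptive choice of $\mesh(W)$ based on the previous $t$ is unnecessary once the sequence $V_k$ is fixed; pre-committing is what makes the case analysis well-defined and termination automatic.
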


We remark that in Lemma~\ref{lem:V J}, since $V$ is arbitrary, $W$ can be chosen so that $\mesh(W)$ is arbitrarily small.

\begin{proof}
Let $V_1,V_2,\ldots$ be a sequence of finite subsets of $[-1,1]$ such that $V \subseteq V_1 \subset V_2 \subset \cdots$, and $\mesh(V_k) \to 0$ as $k \to \infty$.  Let $\mathcal{F}_0 = \mathcal{F}$.  Recursively, for each $k \geq 1$, apply Lemma~\ref{lem:same trunc contour} to obtain an infinite subset $\mathcal{F}_k \subseteq \mathcal{F}_{k-1}$ such that all of the maps $\trunc{f}{V_k}$, $f \in \mathcal{F}_k$, have the same radial contour factor $t_k$.

We will prove that for sufficiently large $k$, $\mesh \left( t_k^{-1}(V_k) \right) < \delta$, thus establishing properties (1) and (2) of the Lemma (with $V_k$, $\mathcal{F}_k$, and $t_k$ playing the roles of $W$, $\mathcal{F}'$, and $t$ there).  We will consider the ``right side'', $t_k {\restriction}_{[0,1]}$, of the maps $t_k$, only.  That is, we will prove that for sufficiently large $k$, $\mesh \left( t_k^{-1}(V_k) \cap [0,1] \right) < \delta$.  The argument for the ``left side'' is similar.  We consider two cases.

\medskip
\textbf{Case~1:} Suppose the number of right contour points of $t_k$ is unbounded as $k \to \infty$.  Note that the right contour points of $t_k$ are evenly spaced in $[0,1]$ (by definition of contour factor).  Also, for any $f \in \mathcal{F}_k$, the images of these right contour points under $t_k$ are equal to the images, under $\trunc{f}{V_k}$, of the right contour points of $\trunc{f}{V_k}$, and these belong to $V_k$ by Lemmas~\ref{lem:deps trunc} and \ref{lem:contour trunc}.  It follows that for sufficiently large $k$, $\mesh \left( t_k^{-1}(V_k)  \cap [0,1] \right) < \delta$.

\medskip
\textbf{Case~2:} Suppose the number of right contour points of $t_k$ is bounded as $k \to \infty$.  Let $k_0 \in \mathbb{N}$ be such that the number $N$ of right contour points of $t_{k_0}$ is maximal among all $t_k$, $k \in \mathbb{N}$.  We claim that for all $k > k_0$, $t_k$ also has $N$ right contour points.  Indeed, given $k > k_0$, let $f \in \mathcal{F}_k$, so that $t_k$ is the radial contour factor of $\trunc{f}{V_k}$.  Since $\mathcal{F}_k \subset \mathcal{F}_{k_0}$, $f \in \mathcal{F}_{k_0}$ as well, so $t_{k_0}$ is the radial contour factor of $\trunc{f}{V_{k_0}}$.  Hence, the number of right contour points of $\trunc{f}{V_{k_0}}$ is $N$.  According to the remark immediately following the statement of Lemma~\ref{lem:refine trunc contours}, the number of right contour points of $\trunc{f}{V_k}$ (and thus of $t_k$) is at least $N$, hence equals $N$ by maximality of $N$.

Now let $k_1,k_2 \in \mathbb{N}$ with $k_0 \leq k_1 < k_2$.  Let $f \in \mathcal{F}_{k_2}$, so that $t_{k_1}$ is the radial contour factor of $\trunc{f}{V_{k_1}}$, and $t_{k_2}$ is the radial contour factor of $\trunc{f}{V_{k_2}}$.  Since $\trunc{f}{V_{k_1}}$ and $\trunc{f}{V_{k_2}}$ each have $N$ right contour points, Lemma~\ref{lem:refine trunc contours} also tells us that, if $\alpha_1^1$ is the first right contour point of $\trunc{f}{V_{k_1}}$ and $\alpha_1^2$ is the first right contour point of $\trunc{f}{V_{k_2}}$, then $\left| f(\alpha_1^1) \right| \leq \left| f(\alpha_1^2) \right|$.  That is, $|t_{k_1}(\frac{1}{N})| \leq |t_{k_2}(\frac{1}{N})|$.  Thus, the sequence $|t_k(\frac{1}{N})|$, $k \geq k_0$, is non-decreasing.

Let $k \in \mathbb{N}$ with $k_0 \leq k$.  Let $f \in \mathcal{F}_k$, and let $\alpha_1$ be the first right contour point of $\trunc{f}{V_k}$.  Since $t_k$ is the radial contour factor of $\trunc{f}{V_k}$, we have $t_k(\frac{1}{N}) = \trunc{f}{V_k}(\alpha_1) = f(\alpha_1)$, where the latter equality follows from Lemma~\ref{lem:deps trunc}.  By Lemma~\ref{lem:mesh preim contour},
\[ \mesh \left( t_k^{-1}(V_k) \cap [0,1] \right) \leq \frac{\mesh(V_k)}{N \cdot \left| f(\alpha_1) \right|} = \frac{\mesh(V_k)}{N \cdot \left| t_k(\frac{1}{N}) \right|} .\]
Therefore, since $\mesh(V_k) \to 0$ as $k \to \infty$, and $|t_k(\frac{1}{N})|$ is non-decreasing for $k \geq k_0$, we have that for sufficiently large $k$, $\mesh \left( t_k^{-1}(V_k) \cap [0,1] \right) < \delta$.
\end{proof}

\subsection{Proof of Theorem~\ref{thm:main}}

We now proceed with the proof of Theorem~\ref{thm:main}.  In this section we will make use of the concept of radial departures, and Propositions~\ref{prop:embed 0 accessible}, \ref{prop:comp dep}, and Lemma~\ref{lem:f tf same dep}, recalled in Section \ref{sec:rad deps}.

Let $X = \varprojlim \left \langle [-1,1],f_i \right \rangle$ be an arbitrary arc-like continuum (containing more than one point).  As mentioned in the preliminaries section above, we may assume that all of the bonding maps $f_i$ are piecewise-linear and nowhere constant.  Let $x = \langle x_i \rangle_{i=1}^\infty$ be an arbitrary point in $X$.  It is well-known that if $x_i = \pm 1$ for infinitely many $i$, then $x$ is an endpoint of $X$ and one can easily embed $X$ in $\mathbb{R}^2$ so as to make $x$ an accessible point (see e.g.\ \cite[p.295]{problems2018}).  Hence, we may as well assume that $x_i \neq \pm 1$ for all but finitely many $i$, and, in fact, by dropping finitely many coordinates we may assume that $x_i \neq \pm 1$ for all $i$.  Then, for each $i$, let $h_i \colon [-1,1] \to [-1,1]$ be a piecewise-linear homeomorphism such that $h_i(x_i) = 0$, and define $f_i' = h_i \circ f_i \circ h_{i+1}^{-1}$.  Then $\varprojlim \left \langle [-1,1],f_i' \right \rangle \approx X$, and, in fact, an explicit homeomorphism $h \colon X \to \varprojlim \left \langle [-1,1],f_i' \right \rangle$ is given by $h \left( \langle y_i \rangle_{i=1}^\infty \right) = \langle h_i(y_i) \rangle_{i=1}^\infty$, and clearly $h(x) = \langle 0,0,\ldots \rangle$.  Thus, to prove Theorem~\ref{thm:main}, it suffices to prove that if $f_i \colon [-1,1] \to [-1,1]$, $i = 1,2,\ldots$, are nowhere constant piecewise-linear maps such that $f_i(0) = 0$ for each $i$, there exists an embedding of $X = \varprojlim \left \langle [-1,1],f_i \right \rangle$ into $\mathbb{R}^2$ for which the point $\langle 0,0,\ldots \rangle \in X$ is accessible.

For the remainder of the proof, let $f_i \colon [-1,1] \to [-1,1]$, $i \in \mathbb{N}$ be nowhere constant piecewise-linear maps such that $f_i(0) = 0$ for all $i \in \mathbb{N}$.  According to Lemma~\ref{lem:nearly constant half}, if for infinitely many $i \in \mathbb{N}$, we have that for each $\varepsilon > 0$, there exists $n > i$ such that $f_i^n {\restriction}_{[0,1]}$ is $\varepsilon$-close to $0$ (constant map), then $X$ is homeomorphic to an inverse limit $Y = \varprojlim \left \langle [-1,1],g_k \right \rangle$, where $g_k \colon [-1,1] \to [-1,1]$, $k \in \mathbb{N}$, are maps such that $g_k {\restriction}_{[0,1]}$ is $0$ (constant map) for each $k \in \mathbb{N}$, and, in fact, there exists a homeomorphism which maps the point $\langle 0,0,\ldots \rangle \in X$ to $\langle 0,0,\ldots \rangle \in Y$.  Clearly these maps $g_k$ have no radial departures, therefore by Proposition~\ref{prop:embed 0 accessible}, there exists an embedding of $Y$ in $\mathbb{R}^2$ with the point $\langle 0,0,\ldots \rangle$ accessible.  Similar remarks apply for the restrictions $f_i^n {\restriction}_{[-1,0]}$.

In light of the previous paragraph, by dropping finitely many coordinates if necessary, we may suppose that for each $i \in \mathbb{N}$, there exists $\eta_i > 0$ such that the diameters of the sets $f_i^n([0,1])$ and $f_i^n([-1,0])$ are greater than or equal to $\eta_i$ for all $n > i$.  This means that for each $i \in \mathbb{N}$, if $V \subset [-1,1]$ is such that $0 \in V$ and $V$ has small enough mesh ($\mesh(V) \leq \eta_i$), then $\trunc{f_i^n}{V}$ has well-defined radial contour factor for all $n > i$.

Fix a sequence $\delta_1,\delta_3,\delta_5,\ldots$ of positive numbers such that $\delta_k \to 0$ as $k \to \infty$ (e.g.\ let $\delta_k = \frac{1}{k}$).  Our plan is to construct all of the elements of the diagram in Figure~\ref{fig:diagram} below, in such a way that the inverse limit of each row in the diagram will be homeomorphic to $X = \varprojlim \left \langle [-1,1],f_i \right \rangle$.  Specifically, the maps $F_k$ from the first row will be compositions of the maps $f_i$, so that $\varprojlim \left \langle [-1,1],F_k \right \rangle \approx X$ by standard properties of inverse limits (composing bonding maps).  The inverse limit of the second row in the diagram will be homeomorphic to the first by the criterion from Mioduszewski's Theorem, Theorem~\ref{thm:mioduszewski} above, for some sequence $\varepsilon_k$, $k \geq 1$ odd, of positive numbers to be constructed.  The inverse limits of the second and third rows are homeomorphic because the diagram commutes, i.e.\ $\trunc{F_k}{V_k} \circ \trunc{F_{k+1}}{V_{k+1}} = t_k \circ s_{k,k+1}$ for each odd $k \geq 1$.  Finally, the inverse limit of the fourth row in the diagram will be homeomorphic to the third by the criterion from Mioduszewski's Theorem, Theorem~\ref{thm:mioduszewski} above, for the sequence $\delta_k$, $k \geq 1$ odd.  The construction will be made so that in the final row of the diagram, the maps $s_{k,k+1} \circ \trunc{t_{k+2}}{V_{k+2}'}$ have no negative radial departures, so that the inverse limit can be embedded in $\mathbb{R}^2$ with the point $\langle 0,0,\ldots \rangle$ accessible, by Proposition~\ref{prop:embed 0 accessible}.

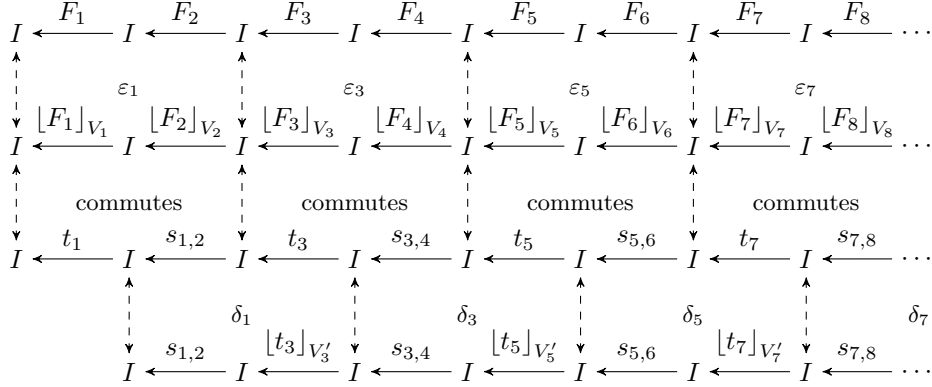
\begin{figure}
\begin{center}
\begin{tikzpicture}[>=stealth', scale=1.5]
\foreach \x in {1,...,8}
  \node (row1-\x) at (\x,4) {$I$};
\node (row1-9) at (9,4) {$\cdots$};

\foreach \x [remember=\x as \lastx (initially 1)] in {2,...,9}
  \draw[->] (row1-\x) to node[above]{$F_{\lastx}$} (row1-\lastx);

\foreach \x in {1,...,8}
  \node (row2-\x) at (\x,3) {$I$};
\node (row2-9) at (9,3) {$\cdots$};

\foreach \x [remember=\x as \lastx (initially 1)] in {2,...,9}
  \draw[->] (row2-\x) to node[above]{$\trunc{F_{\lastx}}{V_{\lastx}}$} (row2-\lastx);

\foreach \x in {1,3,...,7} {
  \draw[<->, dashed] (row1-\x)--(row2-\x);
  \node at (\x+1,3.5) {\small $\varepsilon_{\x}$};
}

\foreach \x in {1,...,8}
  \node (row3-\x) at (\x,2) {$I$};
\node (row3-9) at (9,2) {$\cdots$};

\foreach \x/\y in {1/2,3/4,5/6,7/8}
  \draw[->] (row3-\y) to node[above]{$t_{\x}$} (row3-\x);
\foreach \x/\y/\z in {1/2/3,3/4/5,5/6/7,7/8/9}
  \draw[->] (row3-\z) to node[above]{$s_{\x,\y}$} (row3-\y);

\foreach \x in {1,3,...,7} {
  \draw[<->, dashed] (row2-\x)--(row3-\x);
  \node at (\x+1,2.5) {\small commutes};
}

\foreach \x in {2,...,8}
  \node (row4-\x) at (\x,1) {$I$};
\node (row4-9) at (9,1) {$\cdots$};

\foreach \x/\y in {3/4,5/6,7/8}
  \draw[->] (row4-\y) to node[above]{$\trunc{t_{\x}}{V_{\x}'}$} (row4-\x);
\foreach \x/\y/\z in {1/2/3,3/4/5,5/6/7,7/8/9}
  \draw[->] (row4-\z) to node[above]{$s_{\x,\y}$} (row4-\y);

\foreach \x/\y in {1/2,3/4,5/6,7/8} {
  \draw[<->, dashed] (row3-\y)--(row4-\y);
  \node at (\y+1,1.5) {\small $\delta_{\x}$};
}
\end{tikzpicture}
\end{center}

\caption{Almost commutative diagram from the proof of Theorem~\ref{thm:main}.  Here, $I$ denotes the interval $[-1,1]$, and dashed arrows are identities.  The $\varepsilon_k$'s and $\delta_k$'s are placed in accordance with the convention described in Figure~\ref{fig:mioduszewski diagram} above.}
\label{fig:diagram}
\end{figure}

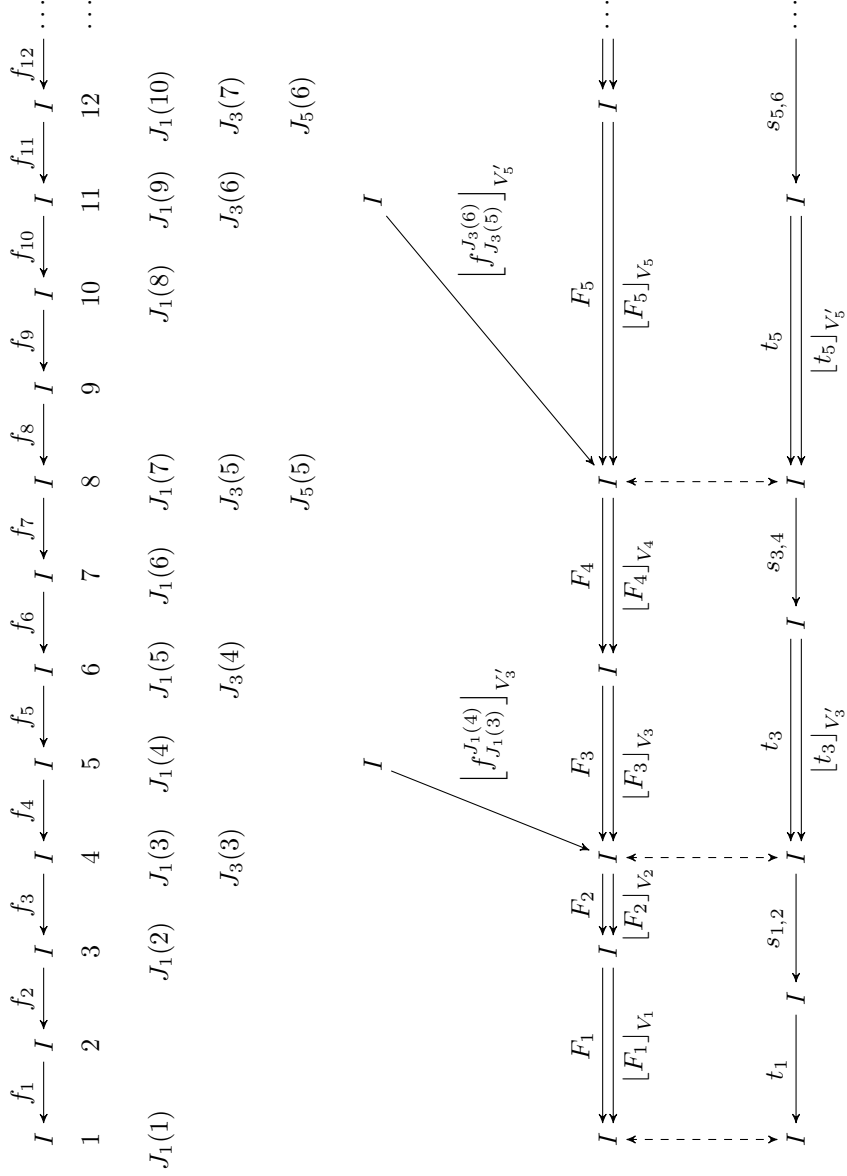
\begin{sidewaysfigure}
\begin{center}
\raisebox{-9.5in}{ 
\begin{tikzpicture}[>=stealth', scale=1.25]
\foreach \x in {1,...,12}
  \node (frow-\x) at (\x,0) {$I$};
\node (frow-13) at (13,0) {$\cdots$};

\foreach \x [remember=\x as \lastx (initially 1)] in {2,...,13}
  \draw[->] (frow-\x) to node[above]{$f_{\lastx}$} (frow-\lastx);

\foreach \x in {1,...,12}
  \node at (\x,-0.5) {$\x$};
\node at (13,-0.5) {$\cdots$};

\foreach \x [count=\xi] in {1,3,4,5,6,7,8,10,11,12}
  \node at (\x,-1.25) {$J_1(\xi)$};
\foreach \x [count=\xi] in {0,0,4,6,8,11,12} {
  \ifnum\xi>2
    \node at (\x,-2) {$J_3(\xi)$};
  \fi
}
\foreach \x [count=\xi] in {0,0,0,0,8,12} {
  \ifnum\xi>4
    \node at (\x,-2.75) {$J_5(\xi)$};
  \fi
}

\foreach \x [count=\xi] in {1,3,4,6,8,12}
  \node (crow-\xi) at (\x,-6) {$I$};
\node (crow-7) at (13,-6) {$\cdots$};

\foreach \x [remember=\x as \lastx (initially 1)] in {2,...,6} {
  \draw[->, transform canvas={yshift=2pt}] (crow-\x) to node[above]{$F_{\lastx}$} (crow-\lastx);
  \draw[->, transform canvas={yshift=-2pt}] (crow-\x) to node[below]{$\trunc{F_{\lastx}}{V_{\lastx}}$} (crow-\lastx);
}
\draw[->, transform canvas={yshift=2pt}] (crow-7) to (crow-6);
\draw[->, transform canvas={yshift=-2pt}] (crow-7) to (crow-6);

\node (orow-4) at (5,-3.5) {$I$};
\node (orow-6) at (11,-3.5) {$I$};
\draw[->] (orow-4) to node[pos=0.3, below right]{$\trunc{f_{J_1(3)}^{J_1(4)}}{V_3'}$} (crow-3);
\draw[->] (orow-6) to node[pos=0.3, below right]{$\trunc{f_{J_3(5)}^{J_3(6)}}{V_5'}$} (crow-5);

\foreach \x [count=\xi] in {1,2.5,4,6.5,8,11}
  \node (arow-\xi) at (\x,-8) {$I$};
  \node (arow-7) at (13,-8) {$\cdots$};

\draw[->] (arow-2) to node[above]{$t_1$} (arow-1);
\draw[->] (arow-3) to node[above]{$s_{1,2}$} (arow-2);
\draw[->, transform canvas={yshift=2pt}] (arow-4) to node[above]{$t_3$} (arow-3);
\draw[->, transform canvas={yshift=-2pt}] (arow-4) to node[below]{$\trunc{t_3}{V_3'}$} (arow-3);
\draw[->] (arow-5) to node[above]{$s_{3,4}$} (arow-4);
\draw[->, transform canvas={yshift=2pt}] (arow-6) to node[above]{$t_5$} (arow-5);
\draw[->, transform canvas={yshift=-2pt}] (arow-6) to node[below]{$\trunc{t_5}{V_5'}$} (arow-5);
\draw[->] (arow-7) to node[above]{$s_{5,6}$} (arow-6);

\foreach \x in {1,3,5}
  \draw[<->, dashed] (crow-\x) to (arow-\x);
\end{tikzpicture}
}
\end{center}

\caption{Alternative diagram depicting the elements to be constructed in the proof of Theorem~\ref{thm:main}, showing an example of what the sets $J_1,J_3,J_5$ might look like.}
\label{fig:diagram 2}
\end{sidewaysfigure}

Figure~\ref{fig:diagram 2} gives an alternative illustration of what will be constructed below, including some additional elements that are part of the construction but not shown in Figure~\ref{fig:diagram}.

We will recursively construct the following:
\begin{itemize}
\item Positive real numbers $\varepsilon_1,\varepsilon_3,\varepsilon_5,\ldots$ with $\varepsilon_k \to 0$ as $k \to \infty$;
\item Infinite subsets $J_1 \supset J_3 \supset J_5 \supset \cdots$ of $\mathbb{N}$;
\item Finite subsets $V_1,V_3,V_5,\ldots$ of $[-1,1]$ with $0 \in V_k$ for each odd $k \geq 1$; and
\item Maps $s_{k,k+1} \colon [-1,1] \to [-1,1]$, for each odd $k \geq 1$
\end{itemize}
satisfying the properties \ref{rec1} through \ref{rec8} below.  To state these properties, we use the following definitions/notation, for each odd $k \geq 1$:
\[ J_k = \{J_k(m): m \geq k\} \textrm{ where } J_k(k) < J_k(k+1) < J_k(k+2) < \cdots \]
\[ F_k = f_{J_k(k)}^{J_k(k+1)}, \; F_{k+1} = f_{J_k(k+1)}^{J_k(k+2)} \]
\[ V_{k+1} = F_k^{-1}(V_k), \; V_{k+2}' = F_{k+1}^{-1}(V_{k+1}) \]
\[ t_k = \textrm{the radial contour factor of } \trunc{F_k}{V_k} \]
\[ t_{k+2}' = \textrm{the radial contour factor of } \trunc{f_{J_k(k+2)}^{J_k(k+3)}}{V_{k+2}'} \]
The reader may wish to skim over the list of properties \ref{rec1} through \ref{rec8} below and refer to them in detail when they are referenced in the construction below.  The properties we will require are as follows.  For each odd $k \geq 1$:
\begin{enumerate}[label=(\alph{*})]
\item \label{rec1} For each odd $1 \leq j < k$, for any interval $A \subset [-1,1]$, if $\diam(A) < \varepsilon_k$ then $\diam(s_{j,j+1}(A)) < \delta_k$;
\item \label{rec2} For each odd $1 \leq j \leq k$, for any interval $A \subset [-1,1]$, if $\intr(A) \cap V_k = \emptyset$ then $\diam \left( F_j^k(A) \right) < \varepsilon_k$;
\item \label{rec3} $V_k \supseteq V_k'$ for each odd $k > 1$, and $\trunc{f_{J_k(k)}^n}{V_k}$ has well-defined radial contour factor for all $n > J_k(k)$, for each odd $k \geq 1$;
\item \label{rec4} $J_1(1) = 1$, and $J_k(k) = J_{k-2}(k)$ for each odd $k > 1$;
\item \label{rec5} $\mesh \left( t_k^{-1}(V_k) \right) < \delta_k$;
\item \label{rec6} For each $n_1,n_2 \in J_k$ with $n_1,n_2 > J_k(k+2)$, the radial contour factors of $\trunc{f_{J_k(k+2)}^{n_1}}{V_{k+2}'}$ and $\trunc{f_{J_k(k+2)}^{n_2}}{V_{k+2}'}$ (which are well-defined by \ref{rec3} and Lemma~\ref{lem:well-def preimage}) are the same (equal to $t_{k+2}'$);
\item \label{rec7} $t_k \circ s_{k,k+1} = \trunc{F_k}{V_k} \circ \trunc{F_{k+1}}{V_{k+1}}$; and
\item \label{rec8} $s_{k,k+1} \circ t_{k+2}'$ has no negative radial departures.
\end{enumerate}

To carry out this recursive construction, let $k \geq 1$ be odd and suppose that $V_k'$ and $J_{k-2}$ have been defined (in the case $k=1$, we may take $V_1' \supseteq \{-1,0,1\}$ with mesh small enough so that $\trunc{f_1^n}{V_1'}$ has well-defined radial contour factor for all $n > 1$, and $J_{-1} = \mathbb{N}$, with $J_{-1}(1) = 1$), and suppose also that the maps $s_{j,j+1}$, for odd $1 \leq j < k$, have been defined.  Refer to Figure~\ref{fig:construction step k} for a summary of what maps we will construct in this stage $k$.  Choose $\varepsilon_k > 0$ small enough so that \ref{rec1} holds (in the case $k=1$, we may choose $\varepsilon_1 > 0$ arbitrarily).  Let $J_k(k) = J_{k-2}(k)$.  Then apply Lemma~\ref{lem:V J} to the family
\[ \mathcal{F} = \left\{ f_{J_k(k)}^n: n \in J_{k-2} \textrm{ and } n > J_k(k) \right\} \]
and $V = V_k'$ to obtain a finite set $V_k$ and an infinite subset $J_k' \subset J_{k-2}$ such that:
\begin{itemize}
\item \ref{rec3} is satisfied;
\item $n > J_k(k)$ for all $n \in J_k'$;
\item all of the maps $\trunc{f_{J_k(k)}^n}{V_k}$, $n \in J_k'$, have the same radial contour factor $t_k$; and
\item \ref{rec5} is satisfied.
\end{itemize}

We may further assume that $\mesh(V_k)$ is small enough so that \ref{rec2} holds.  Let $J_k(k+1) = \min J_k'$, and let $V_{k+1} = \left( f_{J_k(k)}^{J_k(k+1)} \right)^{-1}(V_k)$.  By Lemma~\ref{lem:well-def preimage}, $\trunc{f_{J_k(k+1)}^n}{V_{k+1}}$ has well-defined radial contour factor for all $n > J_k(k+1)$.  Next, apply Lemma~\ref{lem:same trunc contour} to the family
\[ \mathcal{F} = \left\{ f_{J_k(k+1)}^n: n \in J_k' \textrm{ and } n > J_k(k+1) \right\} \]
to obtain an infinite set $J_k'' \subseteq J_k' \smallsetminus \{J_k(k+1)\}$ such that all of the maps $\trunc{f_{J_k(k+1)}^n}{V_{k+1}}$, $n \in J_k''$, have the same radial contour factor.  Let $J_k(k+2) = \min J_k''$, and let $V_{k+2}' = \left( f_{J_k(k+1)}^{J_k(k+2)} \right)^{-1}(V_{k+1})$.  By Lemma~\ref{lem:well-def preimage}, $\trunc{f_{J_k(k+2)}^n}{V_{k+2}'}$ has well-defined radial contour factor for all $n > J_k(k+2)$.  Now apply Lemma~\ref{lem:same trunc contour} again, this time to the family
\[ \mathcal{F} = \left\{ f_{J_k(k+2)}^n: n \in J_k'' \textrm{ and } n > J_k(k+2) \right\} ,\]
to obtain an infinite set $J_k''' \subseteq J_k'' \smallsetminus \{J_k(k+2)\}$ such that all of the maps $\trunc{f_{J_k(k+2)}^n}{V_{k+2}'}$, $n \in J_k'''$, have the same radial contour factor.  Let $J_k = \{J_k(k), J_k(k+1), J_k(k+2)\} \cup J_k'''$.  By construction, and the defining property of $J_k'''$, we have that \ref{rec4} and \ref{rec6} are satisfied.

Finally, we will construct the map $s_{k,k+1}$ using Lemma~\ref{lem:bridged s} as follows.  Note that by the third bullet point above in the properties of $J_k'$, the radial contour factors of
\[ \trunc{F_k}{V_k} = \trunc{f_{J_k(k)}^{J_k(k+1)}}{V_k} \]
and of
\begin{align*}
\trunc{F_k}{V_k} \circ \trunc{F_{k+1}}{V_{k+1}} &= \trunc{F_k \circ F_{k+1}}{V_k} \quad \textrm{(see Lemma~\ref{lem:compare trunc}\ref{comp trunc})} \\
&= \trunc{f_{J_k(k)}^{J_k(k+2)}}{V_k}
\end{align*}
are the same (equal to $t_k$).  Moreover, the radial contour factors of
\[ \trunc{F_{k+1}}{V_{k+1}} = \trunc{f_{J_k(k+1)}^{J_k(k+2)}}{V_{k+1}} \]
and of
\begin{align*}
\trunc{F_{k+1}}{V_{k+1}} \circ \trunc{f_{J_k(k+2)}^{J_k(k+3)}}{V_{k+2}'} &= \trunc{F_{k+1} \circ f_{J_k(k+2)}^{J_k(k+3)}}{V_{k+1}} \quad \textrm{(see Lemma~\ref{lem:compare trunc}\ref{comp trunc})} \\
&= \trunc{f_{J_k(k+1)}^{J_k(k+3)}}{V_{k+1}}
\end{align*}
are the same by the defining property of $J_k''$.  Therefore, the existence of $s_{k,k+1}$ satisfying \ref{rec7} and \ref{rec8} is given by Lemma~\ref{lem:bridged s} using, in the notation of that Lemma, $\trunc{F_k}{V_k}$ for $f_1$, $\trunc{F_{k+1}}{V_{k+1}}$ for $f_2$, and $\trunc{f_{J_k(k+2)}^{J_k(k+3)}}{V_{k+2}'}$ for $f_3$.  This completes the recursive construction.

\begin{figure}
\begin{center}
\begin{tikzpicture}[>=stealth', scale=1.5]
\node (row1-1) at (0,3) {$I$};
\node (row1-2) at (1.5,3) {$I$};
\node (row1-3) at (3,3) {$I$};

\draw[->] (row1-2) to node[above]{$F_k$} (row1-1);
\draw[->] (row1-3) to node[above]{$F_{k+1}$} (row1-2);

\node (row2-1) at (0,2) {$I$};
\node (row2-2) at (1.5,2) {$I$};
\node (row2-3) at (3,2) {$I$};
\node (row2-4) at (5.5,2) {\textcolor{red}{$I$}};

\draw[->] (row2-2) to node[above]{$\trunc{F_k}{V_k}$} (row2-1);
\draw[->] (row2-3) to node[above]{$\trunc{F_{k+1}}{V_{k+1}}$} (row2-2);
\draw[->, red] (row2-4) to node[above]{$\trunc{f_{J_k(k+2)}^{J_k(k+3)}}{V_{k+2}'}$} (row2-3);

\draw[<->, dashed] (row1-1)--(row2-1);
\draw[<->, dashed] (row1-3)--(row2-3);

\node (row3-1) at (0,1) {$I$};
\node (row3-2) at (1.5,1) {$I$};
\node (row3-3) at (3,1) {$I$};
\node (row3-4) at (4.5,1) {\textcolor{red}{$I$}};

\draw[->] (row3-2) to node[above]{$t_k$} (row3-1);
\draw[->] (row3-3) to node[above]{$s_{k,k+1}$} (row3-2);
\draw[->, red] (row3-4) to node[above]{$t_{k+2}'$} (row3-3);

\draw[<->, dashed] (row2-1)--(row3-1);
\draw[<->, dashed] (row2-3)--(row3-3);

\node at (1.5,1.5) {\small commutes};

\node (row4-1) at (0,0) {$I$};
\node (row4-2) at (1.5,0) {$I$};
\node (row4-3) at (3,0) {$I$};

\draw[->] (row4-2) to node[above]{$\trunc{t_k}{V_k'}$} (row4-1);
\draw[->] (row4-3) to node[above]{$s_{k,k+1}$} (row4-2);

\draw[<->, dashed] (row3-2)--(row4-2);
\end{tikzpicture}
\end{center}

\caption{Maps constructed in the $k$-th stage of the recursion in the proof of Theorem~\ref{thm:main}.  Here, $I$ denotes the interval $[-1,1]$, and dashed arrows are identities.  The maps in red, i.e.\ $\trunc{f_{J_k(k+2)}^{J_k(k+3)}}{V_{k+2}'}$ and $t_{k+2}'$, are not part of the diagram in Figure~\ref{fig:diagram}, even though they are part of the recursive construction.}
\label{fig:construction step k}
\end{figure}
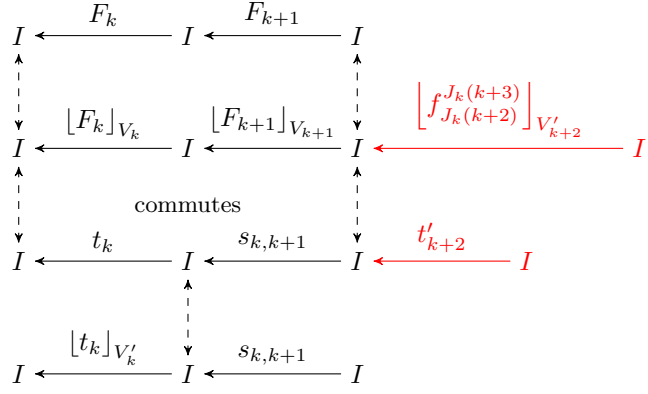

\begin{claim}
\label{claim:rows 12}
Let $G_k = \trunc{F_k}{V_k}$ for each $k \geq 1$.  For every odd $j \leq k \leq \ell$,
\begin{enumerate}
\item $F_j^\ell$ and $F_j^k \circ G_k^\ell$ are $\varepsilon_k$-close; and
\item $G_j^\ell$ and $G_j^k \circ F_k^\ell$ are $\varepsilon_k$-close.
\end{enumerate}
\end{claim}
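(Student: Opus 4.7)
The plan is to reduce both statements to a single pointwise $V_j$-closeness property, then convert it to numerical closeness using property \ref{rec2} and Lemma~\ref{lem:interval image trunc}.

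First, by induction on $\ell - j$, I would show that for every $j \leq \ell$ (odd or even) and every $x \in [-1,1]$, $F_j^\ell(x) \overset{V_j}{=} G_j^\ell(x)$. The base case $\ell = j$ is immediate. For the inductive step, note that $F_j^{-1}(V_j) \subseteq V_{j+1}$: equality holds when $j$ is odd (by the definition $V_{j+1} = F_j^{-1}(V_j)$), while inclusion holds when $j$ is even, since $V_{j+1}$ was obtained from Lemma~\ref{lem:V J} as a superset of $V_{j+1}' = F_j^{-1}(V_j)$. Because $F_j$ is a composition of nowhere constant piecewise-linear maps, $F_j^{-1}(V_j)$ is finite, so one may apply Lemma~\ref{lem:V-close props}\ref{f(x) truncf(y) V-close} with the pair $F_{j+1}^\ell(x),G_{j+1}^\ell(x)$; these are $V_{j+1}$-close by the inductive hypothesis and hence $F_j^{-1}(V_j)$-close by Lemma~\ref{lem:V-close props}(2). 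This yields $F_j^\ell(x) = F_j(F_{j+1}^\ell(x)) \overset{V_j}{=} G_j(G_{j+1}^\ell(x)) = G_j^\ell(x)$.

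With this in hand, fix $x \in [-1,1]$ and let $A_x$ denote the closed interval with endpoints $F_k^\ell(x)$ and $G_k^\ell(x)$. Specializing the previous step to $j = k$ gives $\intr(A_x) \cap V_k = \emptyset$, so property \ref{rec2} delivers $\diam(F_j^k(A_x)) < \varepsilon_k$ for every odd $j \leq k$ (when $j = k$ this is just $\diam(A_x) < \varepsilon_k$). Part (1) is then immediate: $|F_j^\ell(x) - F_j^k \circ G_k^\ell(x)| = |F_j^k(F_k^\ell(x)) - F_j^k(G_k^\ell(x))| \leq \diam(F_j^k(A_x)) < \varepsilon_k$.

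For part (2) with $j < k$, I would first identify $G_j^k = \trunc{F_j^k}{V_j}$ by a short induction on $k - j$ using Lemma~\ref{lem:compare trunc}\ref{comp trunc} with $f = F_j$ and $g = F_{j+1}^k$ (noting $F_j^{-1}(V_j) = V_{j+1}$, which matches the inner truncation in $G_{j+1}^k = \trunc{F_{j+1}^k}{V_{j+1}}$). Then Lemma~\ref{lem:interval image trunc} says $G_j^k(A_x) = \trunc{F_j^k}{V_j}(A_x)$ is either degenerate or contained in $F_j^k(A_x)$, so in either case $\diam(G_j^k(A_x)) \leq \diam(F_j^k(A_x)) < \varepsilon_k$, giving $|G_j^\ell(x) - G_j^k \circ F_k^\ell(x)| < \varepsilon_k$. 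The case $j = k$ recovers the same bound as in (1), since $G_k^k$ is the identity. I do not foresee a serious obstacle; the main care lies in the inclusions $F_j^{-1}(V_j) \subseteq V_{j+1}$ in the first step, and in noting that although Lemma~\ref{lem:compare trunc} is stated with a standing well-defined radial contour factor hypothesis, the proof of its item \ref{comp trunc} depends only on the finiteness of $f^{-1}(V)$, which holds here.
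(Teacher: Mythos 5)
Your proof of part (1) is correct and essentially equivalent to the paper's, differing only in bookkeeping: you propagate the $V_j$-closeness of $F_j^\ell(x)$ and $G_j^\ell(x)$ one bonding map at a time (correctly noting the distinction between $F_j^{-1}(V_j) = V_{j+1}$ for $j$ odd and $F_j^{-1}(V_j) \subseteq V_{j+1}$ for $j$ even), whereas the paper steps two indices at a time, odd to odd, always landing on the equality case. Both work.

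Part (2), however, contains a genuine error. You claim the identification $G_j^k = \trunc{F_j^k}{V_j}$ holds via induction on $k-j$ using Lemma~\ref{lem:compare trunc}\ref{comp trunc}, and cite $F_j^{-1}(V_j) = V_{j+1}$ for the matching inner truncation. But the induction must pass through even indices $m$ strictly between $j$ and $k$, where one has $F_m^{-1}(V_m) = V_{m+1}' \subsetneq V_{m+1}$ in general (you yourself flag this inclusion being strict in the first paragraph). So at the step from $m$ to $m+1$, Lemma~\ref{lem:compare trunc}\ref{comp trunc} gives $\trunc{F_m}{V_m} \circ \trunc{F_{m+1}^k}{V_{m+1}'} = \trunc{F_m^k}{V_m}$, whereas $G_m^k = \trunc{F_m}{V_m} \circ G_{m+1}^k$ uses the truncation at $V_{m+1}$, not $V_{m+1}'$. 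These are generally different maps, so $G_j^k \neq \trunc{F_j^k}{V_j}$ in general; the induction does not close.

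The needed conclusion $\diam(G_j^k(A_x)) \leq \diam(F_j^k(A_x))$ is still true and is what you should argue directly, as the paper does: each $G_m$ in the composition $G_j^k$ is a truncation of the corresponding $F_m$ in $F_j^k$, so Lemma~\ref{lem:interval image trunc} applied repeatedly, one factor at a time, shows that $G_j^k(A_x)$ is either degenerate or contained in $F_j^k(A_x)$. This sidesteps the false identification entirely and needs no compatibility of truncation sets across indices.
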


\begin{proof}[Proof of Claim~\ref{claim:rows 12}]
\renewcommand{\qedsymbol}{\textsquare (Claim~\ref{claim:rows 12})}
First we prove the following:
\begin{enumerate}[label=($\ast$)]
\item \label{F G V-close} For every odd $k \leq \ell$, $F_k^\ell$ and $G_k^\ell$ are $V_k$-close.
\end{enumerate}
To prove \ref{F G V-close}, it suffices to prove that for every odd $p$ and all $x,y \in [-1,1]$, if $x \overset{V_{p+2}}{=\joinrel=} y$ then $F_p \circ F_{p+1}(x) \overset{V_p}{=} G_p \circ G_{p+1}(y)$ (we deduce \ref{F G V-close} by induction, starting with $x \in [-1,1]$, we have $x \overset{V_\ell}{=\joinrel=} x$, hence $F^\ell_{\ell-2}(x) \overset{V_{\ell-2}}{=\joinrel=} G^\ell_{\ell-2}(x)$, hence $F^\ell_{\ell-4}(x) \overset{V_{\ell-4}}{=\joinrel=} G^\ell_{\ell-4}(x)$, etc.).  To this end, let $p$ be odd and suppose $x,y \in [-1,1]$ and $x \overset{V_{p+2}}{=\joinrel=} y$.  Recall that $G_p \circ G_{p+1} = \trunc{F_p \circ F_{p+1}}{V_p}$.  Since $V_{p+2} \supseteq V_{p+2}' = (F_p \circ F_{p+1})^{-1}(V_p)$, we have $x \overset{V_{p+2}'}{=\joinrel=} y$, and so by Lemma~\ref{lem:V-close props}\ref{f(x) truncf(y) V-close}, $F_p \circ F_{p+1}(x) \overset{V_p}{=} G_p \circ G_{p+1}(y)$, as desired.

\medskip
Now, let $j \leq k \leq \ell$ be odd and let $x \in [-1,1]$.  Let $A$ denote the interval with endpoints $F_k^\ell(x)$ and $G_k^\ell(x)$ (or $A = \{F_k^\ell(x)\}$ in the case that $F_k^\ell(x) = G_k^\ell(x)$).  By \ref{F G V-close}, $F_k^\ell(x) \overset{V_k}{=} G_k^\ell(x)$, which means that $\intr(A) \cap V_k = \emptyset$.  Then by property \ref{rec2} above, $\diam(F_j^k(A)) < \varepsilon_k$, so in particular $F_j^\ell(x) = F_j^k \circ F_k^\ell(x)$ and $F_j^k \circ G_k^\ell(x)$ are $\varepsilon_k$-close.  This proves (1).

Moreover, each map in the composition $G_j^k$ is a truncation of the corresponding map in $F_j^k$, so according to Lemma~\ref{lem:interval image trunc} (applied repeatedly), either $G_j^k(A)$ is degenerate or $G_j^k(A) \subseteq F_j^k(A)$.  In either case, $\diam(G_j^k(A)) \leq \diam(F_j^k(A)) < \varepsilon_k$, so in particular $G_j^\ell(x) = G_j^k \circ G_k^\ell(x)$ and $G_j^k \circ F_k^\ell(x)$ are $\varepsilon_k$-close.  This proves (2).
\end{proof}

It follows from Claim~\ref{claim:rows 12} and Theorem~\ref{thm:mioduszewski} that $\varprojlim \left \langle [-1,1],F_k \right \rangle$ and $\varprojlim \left \langle [-1,1],\trunc{F_k}{V_k} \right \rangle$ are homeomorphic, and the homeomorphism between them, provided by Theorem~\ref{thm:mioduszewski}, maps $\langle 0,0,\ldots \rangle$ to $\langle 0,0,\ldots \rangle$.  By property \ref{rec7} above, this latter inverse limit is homeomorphic to
\[ \varprojlim \left \langle [-1,1], s_{1,2}, [-1,1], t_3, [-1,1], s_{3,4}, [-1,1], t_5, \ldots \right \rangle ,\]
and, again, the (natural) homeomorphism between them maps $\langle 0,0,\ldots \rangle$ to $\langle 0,0,\ldots \rangle$.

\begin{claim}
\label{claim:rows 34}
Let $\varphi_k = s_{k,k+1} \circ t_{k+2}$ and $\gamma_k = s_{k,k+1} \circ \trunc{t_{k+2}}{V_{k+2}'}$ for each odd $k \geq 1$.  Also, for odd $k < \ell$, denote $\varphi_k^\ell = \varphi_k \circ \varphi_{k+2} \circ \cdots \circ \varphi_{\ell-2}$ and $\gamma_k^\ell = \gamma_k \circ \gamma_{k+2} \circ \cdots \circ \gamma_{\ell-2}$.  For every odd $j \leq k \leq \ell$,
\begin{enumerate}
\item $\varphi_j^\ell$ and $\varphi_j^k \circ \gamma_k^\ell$ are $\delta_k$-close; and
\item $\gamma_j^\ell$ and $\gamma_j^k \circ \varphi_k^\ell$ are $\delta_k$-close.
\end{enumerate}
\end{claim}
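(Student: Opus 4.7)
First, I would establish by backwards induction on odd $m$ (starting from the base case $m = \ell$) the auxiliary statement that for each odd $m \leq \ell$, the nested tails
\[ U_m^\ell(x) := t_m(s_{m,m+1}(t_{m+2}(s_{m+2,m+3}(\cdots s_{\ell-2,\ell-1}(t_\ell(x)) \cdots)))) \]
and
\[ \tilde U_m^\ell(x) := \trunc{t_m}{V_m'}(s_{m,m+1}(\trunc{t_{m+2}}{V_{m+2}'}(s_{m+2,m+3}(\cdots s_{\ell-2,\ell-1}(\trunc{t_\ell}{V_\ell'}(x)) \cdots)))) \]
satisfy $U_m^\ell(x) \overset{V_m'}{=\joinrel=} \tilde U_m^\ell(x)$. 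The base case is immediate from Lemma~\ref{lem:V-close props}(3); the inductive step, applied to the recursion $U_m^\ell = t_m \circ s_{m,m+1} \circ U_{m+2}^\ell$ (and likewise for $\tilde U$), uses Lemma~\ref{lem:f s t V-close} at level $m$ --- invoking $t_m \circ s_{m,m+1} = \trunc{F_m \circ F_{m+1}}{V_m}$ (\ref{rec7}) together with $V_{m+2}' = (F_m \circ F_{m+1})^{-1}(V_m)$ --- to pass $V_{m+2}'$-closeness through $s_{m,m+1}$ into $t_m^{-1}(V_m)$-closeness, shrinks to $t_m^{-1}(V_m')$-closeness via $V_m' \subseteq V_m$ (\ref{rec3}) and Lemma~\ref{lem:V-close props}(2), and finally invokes Lemma~\ref{lem:V-close props}\ref{f(x) truncf(y) V-close} to swap $t_m$ for $\trunc{t_m}{V_m'}$ on the right-hand side.

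Next (dispensing first with the trivial case $k = \ell$), specialising the auxiliary statement to $m = k+2$ and propagating once more through $s_{k,k+1}$ by Lemma~\ref{lem:f s t V-close} yields $\varphi_k^\ell(x) \overset{t_k^{-1}(V_k)}{=\joinrel=} \gamma_k^\ell(x)$, and then \ref{rec5} converts this to the pointwise bound $|\varphi_k^\ell(x) - \gamma_k^\ell(x)| < \delta_k$. Since $\varphi_k^k = \gamma_k^k = \mathrm{id}$, this already settles the case $j = k$ of both (1) and (2).

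For the remaining case $j < k$, setting $a := \varphi_k^\ell(x)$ and $b := \gamma_k^\ell(x)$ reduces (1) and (2) respectively to showing $|\varphi_j^k(a) - \varphi_j^k(b)| < \delta_k$ and $|\gamma_j^k(a) - \gamma_j^k(b)| < \delta_k$ whenever $a \overset{t_k^{-1}(V_k)}{=\joinrel=} b$. I would first note the elementary intermediate-value-theorem fact that if $A$ is a closed interval and $\intr A \cap f^{-1}(V) = \emptyset$ for a continuous $f$, then $\intr f(A) \cap V = \emptyset$; applying this with $A_0 = [\min(a,b),\max(a,b)]$ and $f = t_k$ gives $\intr A_1 \cap V_k = \emptyset$ for $A_1 := t_k(A_0)$, whence \ref{rec2} (at level $j+2 \leq k$) delivers $\diam F_{j+2}^k(A_1) < \varepsilon_k$. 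For (1), the identity $\varphi_j^k = s_{j,j+1} \circ \trunc{F_{j+2}^k}{V_{j+2}} \circ t_k$ --- obtained by rewriting each consecutive $t_m \circ s_{m,m+1}$ as $\trunc{F_m \circ F_{m+1}}{V_m}$ via \ref{rec7} and then collapsing the chain of truncations via Lemma~\ref{lem:compare trunc}\ref{comp trunc} --- together with Lemma~\ref{lem:interval image trunc}, yields $\diam \trunc{F_{j+2}^k}{V_{j+2}}(A_1) \leq \diam F_{j+2}^k(A_1) < \varepsilon_k$, and then \ref{rec1} (applicable since $j < k$) converts this to the desired $\delta_k$ bound.

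The hard part is (2): the inner composition $\Xi_j^k := \trunc{t_{j+2}}{V_{j+2}'} \circ s_{j+2,j+3} \circ \cdots \circ s_{k-2,k-1} \circ \trunc{t_k}{V_k'}$ does not admit so clean a factorisation. However, I expect the analogous containment $\Xi_j^k(A_0) \subseteq F_{j+2}^k(A_1)$ (or else $\Xi_j^k(A_0)$ is degenerate) to be provable directly by processing the nested maps from the inside out: after each ``round trip'' $s_{m,m+1}$ followed by $\trunc{t_m}{V_m'}$, Lemma~\ref{lem:interval image trunc} combined with \ref{rec7} shows that the current image is either degenerate or contained in the $F_m \circ F_{m+1}$-image of the preceding interval, so an induction on the number of layers gives the claim. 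In either case $\diam \Xi_j^k(A_0) \leq \diam F_{j+2}^k(A_1) < \varepsilon_k$, and \ref{rec1} again converts this to $|\gamma_j^k(a) - \gamma_j^k(b)| < \delta_k$. The place requiring the most care is the bookkeeping in this final argument, particularly verifying that once a degenerate image appears at some stage of the inside-out recursion, all subsequent applications preserve degeneracy and collapse $\Xi_j^k(A_0)$ to a single point.
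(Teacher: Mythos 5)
Your proposal is correct and follows the same overall strategy as the paper: establish a $V$-closeness statement by propagating through the one-step building blocks using Lemma~\ref{lem:f s t V-close} and Lemma~\ref{lem:V-close props}, then convert closeness at level $k$ into a $\delta_k$-bound by pushing the interval $A$ through $\varphi_j^k$ and $\gamma_j^k$ and invoking \ref{rec2} and \ref{rec1}. Your auxiliary statement about $U_m^\ell$ and $\tilde U_m^\ell$ being $V_m'$-close is a mild repackaging of the paper's step \ref{phi gamma tinvV-close}; after one more pass through $s_{k,k+1}$ via Lemma~\ref{lem:f s t V-close} you recover exactly the paper's version, that $\varphi_k^\ell$ and $\gamma_k^\ell$ are $t_k^{-1}(V_k)$-close, so this is fine.

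The only substantive divergence is your treatment of part (2) for $j < k$, which you flag as ``the hard part.'' There you try to show $\Xi_j^k(A_0) \subseteq F_{j+2}^k(A_1)$ by an inside-out induction over round trips, with a separate degeneracy-bookkeeping concern. This does work (I checked the induction), but there is a cleaner route that avoids it entirely, and it is the one the paper takes: compare $\gamma_j^k$ to $\varphi_j^k$ factor-by-factor. Writing
\[
\varphi_j^k = s_{j,j+1} \circ t_{j+2} \circ s_{j+2,j+3} \circ t_{j+4} \circ \cdots \circ s_{k-2,k-1} \circ t_k
\quad\text{and}\quad
\gamma_j^k = s_{j,j+1} \circ \trunc{t_{j+2}}{V_{j+2}'} \circ s_{j+2,j+3} \circ \trunc{t_{j+4}}{V_{j+4}'} \circ \cdots \circ s_{k-2,k-1} \circ \trunc{t_k}{V_k'},
\]
every factor of $\gamma_j^k$ is either identical to or a truncation of the corresponding factor of $\varphi_j^k$. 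Applying Lemma~\ref{lem:interval image trunc} once per truncated factor, processing from the inside out, one gets that $\gamma_j^k(A)$ is either degenerate or contained in $\varphi_j^k(A)$. In either case $\diam(\gamma_j^k(A)) \leq \diam(\varphi_j^k(A))$, and you have already bounded $\diam(\varphi_j^k(A)) < \delta_k$ in part (1). This uses only the result of part (1), eliminates the intermediate target $F_{j+2}^k(A_1)$, and dissolves the degeneracy bookkeeping into a single phrase, because degeneracy simply propagates forward.

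One minor remark on your factorization $\varphi_j^k = s_{j,j+1} \circ \trunc{F_{j+2}^k}{V_{j+2}} \circ t_k$ used in part (1): this is correct (and indeed more precise than the formula as written in the paper, which omits the truncation), and Lemma~\ref{lem:interval image trunc} gives the needed diameter inequality either way.
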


\begin{proof}[Proof of Claim~\ref{claim:rows 34}]
\renewcommand{\qedsymbol}{\textsquare (Claim~\ref{claim:rows 34})}
We point out that for each odd $k < \ell$, $\varphi_k^\ell = s_{k,k+1} \circ F_{k+2}^\ell \circ t_\ell$.

First we prove the following:
\begin{enumerate}[label=($\ast\ast$)]
\item \label{phi gamma tinvV-close} For every odd $k \leq \ell$, $\varphi_k^\ell$ and $\gamma_k^\ell$ are $t_k^{-1}(V_k)$-close.
\end{enumerate}
To prove \ref{phi gamma tinvV-close}, it suffices (as with \ref{F G V-close} in the proof of Claim~\ref{claim:rows 12}) to prove that for every odd $p$ and all $x,y \in [-1,1]$, if $x \overset{t_{p+2}^{-1}(V_{p+2})}{=\joinrel=} y$ then $\varphi_p(x) \overset{t_p^{-1}(V_p)}{=\joinrel=} \gamma_p(y)$.  To this end, let $p$ be odd and suppose $x,y \in [-1,1]$ and $x \overset{t_{p+2}^{-1}(V_{p+2})}{=\joinrel=} y$.  Since $V_{p+2}' \subseteq V_{p+2}$, we have $x \overset{t_{p+2}^{-1}(V_{p+2}')}{=\joinrel=} y$, so by Lemma~\ref{lem:V-close props}\ref{f(x) truncf(y) V-close}, $t_{p+2}(x) \overset{V_{p+2}'}{=\joinrel=} \trunc{t_{p+2}}{V_{p+2}'}(y)$.  Recall that $V_{p+2}' = (F_p \circ F_{p+1})^{-1}(V_p)$, and that $\trunc{F_p \circ F_{p+1}}{V_p} = t_p \circ s_{p,p+1}$, so by Lemma~\ref{lem:f s t V-close} (applied using, in the notation of that Lemma, $V_p$ for $V$, $F_p \circ F_{p+1}$ for $f$, $t_p$ for $t$, $s_{p,p+1}$ for $s$, $t_{p+2}(x)$ for $x$, and $\trunc{t_{p+2}}{V_{p+2}'}(y)$ for $y$), $s_{p,p+1} \circ t_{p+2}(x) \overset{t_p^{-1}(V_p)}{=\joinrel=} s_{p,p+1} \circ \trunc{t_{p+2}}{V_{p+2}'}(y)$, i.e.\ $\varphi_p(x) \overset{t_p^{-1}(V_p)}{=\joinrel=} \gamma_p(y)$, as desired.

\medskip
Now, let $j \leq k \leq \ell$ be odd and let $x \in [-1,1]$.  If $j = k$, then by \ref{phi gamma tinvV-close} and property \ref{rec5} above, $\varphi_j^\ell = \varphi_k^\ell$ and $\varphi_j^k \circ \gamma_k^\ell = \gamma_k^\ell$ are $\delta_k$-close.  Suppose, then, that $j < k$.  Let $A$ denote the interval with endpoints $\varphi_k^\ell(x)$ and $\gamma_k^\ell(x)$ (or $A = \{\varphi_k^\ell(x)\}$ in the case that $\varphi_k^\ell(x) = \gamma_k^\ell(x)$).  By \ref{phi gamma tinvV-close}, $\varphi_k^\ell(x) \overset{t_k^{-1}(V_k)}{=\joinrel=} \gamma_k^\ell(x)$, which means that $\intr(A) \cap t_k^{-1}(V_k) = \emptyset$.  It follows (e.g.\ from Lemma~\ref{lem:V-close props}\ref{f(x) f(y) V-close} applied to each pair of points in $A$) that $\intr(t_k(A)) \cap V_k = \emptyset$.  Then by property \ref{rec2} above, $\diam(F_{j+2}^k \circ t_k(A)) < \varepsilon_k$, so by property \ref{rec1} above, $\diam(s_{j,j+1} \circ F_{j+2}^k \circ t_k(A)) < \delta_k$, i.e.\ $\diam(\varphi_j^k(A)) < \delta_k$.  So in particular, the points
\[ \varphi_j^\ell(x) = \varphi_j^k \circ \varphi_k^\ell(x) = s_{j,j+1} \circ F_{j+2}^k \circ t_k \circ \varphi_k^\ell(x) \]
and
\[ \varphi_j^k \circ \gamma_k^\ell(x) = s_{j,j+1} \circ F_{j+2}^k \circ t_k \circ \gamma_k^\ell(x) \]
are $\delta_k$-close.  This proves (1).

Moreover, note that each map in the composition
\[ \gamma_j^k = s_{j,j+1} \circ \trunc{t_{j+2}}{V_{j+2}'} \circ \cdots \circ s_{k-2,k-1} \circ \trunc{t_k}{V_k'} \]
is either equal to the corresponding map in $\varphi_j^k$ or a truncation of the corresponding map in $\varphi_j^k$, so according to Lemma~\ref{lem:interval image trunc} (applied repeatedly), either $\gamma_j^k(A)$ is degenerate or $\gamma_j^k(A) \subseteq \varphi_j^k(A)$.  In either case, $\diam(\gamma_j^k(A)) \leq \diam(\varphi_j^k(A)) < \delta_k$, so in particular $\gamma_j^\ell(x) = \gamma_j^k \circ \gamma_k^\ell(x)$ and $\gamma_j^k \circ \varphi_k^\ell(x)$ are $\delta_k$-close.  This proves (2).
\end{proof}

It follows from Claim~\ref{claim:rows 34} and Theorem~\ref{thm:mioduszewski} that
\[ \varprojlim \left \langle [-1,1], s_{1,2}, [-1,1], t_3, [-1,1], s_{3,4}, [-1,1], t_5, \ldots \right \rangle \]
and
\[ \varprojlim \left \langle [1,1], s_{1,2}, [-1,1], \trunc{t_3}{V_3'}, [-1,1], s_{3,4}, [-1,1], \trunc{t_5}{V_5'}, \ldots \right \rangle \]
are homeomorphic, and the homeomorphism between them, provided by Theorem~\ref{thm:mioduszewski}, maps $\langle 0,0,\ldots \rangle$ to $\langle 0,0,\ldots \rangle$.

\begin{claim}
\label{claim:no neg deps}
For each odd $k \geq 1$, $s_{k,k+1} \circ \trunc{t_{k+2}}{V_{k+2}'}$ has no negative radial departures.
\end{claim}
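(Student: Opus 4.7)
The plan is to reduce the claim to property~\ref{rec8}, which states that $s_{k,k+1} \circ t_{k+2}'$ has no negative radial departures. I would achieve this by first showing that $t_{k+2}'$ is the radial contour factor of $\trunc{t_{k+2}}{V_{k+2}'}$, and then applying Lemma~\ref{lem:f tf same dep} and Lemma~\ref{lem:comp same dep} in succession to transfer the absence of negative radial departures from $s_{k,k+1} \circ t_{k+2}'$ to $s_{k,k+1} \circ \trunc{t_{k+2}}{V_{k+2}'}$.

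To identify the radial contour factor of $\trunc{t_{k+2}}{V_{k+2}'}$, I would start from $F_{k+2} = f_{J_{k+2}(k+2)}^{J_{k+2}(k+3)}$ and use property~\ref{rec4} to rewrite the lower index as $J_k(k+2)$. Since $J_{k+2} \subseteq J_k$ and $J_{k+2}(k+3) > J_{k+2}(k+2) = J_k(k+2)$, property~\ref{rec6} (applied with $n_1 = J_k(k+3)$ and $n_2 = J_{k+2}(k+3)$) yields that $\trunc{F_{k+2}}{V_{k+2}'}$ has radial contour factor $t_{k+2}'$. Next, because $V_{k+2}' \subseteq V_{k+2}$ by~\ref{rec3}, I would apply Lemma~\ref{lem:compare trunc}\ref{fV tfWV} with $f = F_{k+2}$, $V = V_{k+2}'$, $W = V_{k+2}$, to obtain that $\trunc{F_{k+2}}{V_{k+2}'}$ and $\trunc{t_{k+2}}{V_{k+2}'}$ share a radial contour factor; combining, that factor is $t_{k+2}'$.

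With that identification in hand, Lemma~\ref{lem:f tf same dep} shows that $\trunc{t_{k+2}}{V_{k+2}'}$ and $t_{k+2}'$ have the same radial departures, and Lemma~\ref{lem:comp same dep} (using $s_{k,k+1}(0) = 0$, which is given by Lemma~\ref{lem:bridged s}) lifts this to the compositions $s_{k,k+1} \circ \trunc{t_{k+2}}{V_{k+2}'}$ and $s_{k,k+1} \circ t_{k+2}'$. Since the orientation of a radial departure is determined by the order of its two image values, a hypothetical negative radial departure of $s_{k,k+1} \circ \trunc{t_{k+2}}{V_{k+2}'}$ would produce a pair $(y_1,y_2)$ with $y_1 > y_2$ realized by a radial departure of $s_{k,k+1} \circ t_{k+2}'$ as well, and that departure would necessarily be negatively oriented, contradicting~\ref{rec8}.

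The main obstacle is the identification of $t_{k+2}'$ as the radial contour factor of $\trunc{t_{k+2}}{V_{k+2}'}$: the map $t_{k+2}'$ was defined during stage $k$ of the recursion as the radial contour factor of $\trunc{f_{J_k(k+2)}^{J_k(k+3)}}{V_{k+2}'}$, whereas $t_{k+2}$ is the radial contour factor of a \emph{different} composition $\trunc{f_{J_k(k+2)}^{J_{k+2}(k+3)}}{V_{k+2}}$, truncated by the \emph{coarser} set $V_{k+2}$. It is precisely the coherence built into the construction — property~\ref{rec6} controls the effect of varying the terminal index within $J_k$, while Lemma~\ref{lem:compare trunc}\ref{fV tfWV} controls the effect of replacing a truncation by the truncation of its radial contour factor — that makes these two viewpoints agree.
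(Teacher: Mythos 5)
Your proposal matches the paper's proof essentially step for step: you identify $t_{k+2}'$ as the radial contour factor of $\trunc{F_{k+2}}{V_{k+2}'}$ via properties \ref{rec4} and \ref{rec6}, use Lemma~\ref{lem:compare trunc}\ref{fV tfWV} together with \ref{rec3} to conclude that it is also the radial contour factor of $\trunc{t_{k+2}}{V_{k+2}'}$, and then transfer the absence of negative radial departures from $s_{k,k+1} \circ t_{k+2}'$ (property~\ref{rec8}) to $s_{k,k+1} \circ \trunc{t_{k+2}}{V_{k+2}'}$ using Lemma~\ref{lem:f tf same dep} and Lemma~\ref{lem:comp same dep}. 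Your closing remark, spelling out that ``having the same radial departures'' preserves orientations because orientation is read off from the order of the two image values, is a correct and slightly more explicit version of the paper's final ``Thus.''
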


\begin{proof}[Proof of Claim~\ref{claim:no neg deps}]
\renewcommand{\qedsymbol}{\textsquare (Claim~\ref{claim:no neg deps})}
Recall that according to property \ref{rec4} above, $J_{k+2}(k+2) = J_k(k+2)$.  This means $F_{k+2} = f_{J_{k+2}(k+2)}^{J_{k+2}(k+3)} = f_{J_k(k+2)}^{J_{k+2}(k+3)}$, and $J_{k+2}(k+3) \in J_{k+2} \subset J_k$ with $J_{k+2}(k+3) > J_{k+2}(k+2) = J_k(k+2)$.  So by property \ref{rec6}, $t_{k+2}'$ is the radial contour factor of $\trunc{F_{k+2}}{V_{k+2}'}$.  Recall also that by property \ref{rec8} above, $s_{k,k+1} \circ t_{k+2}'$ has no negative radial departures.

By definition, $t_{k+2}$ is the radial contour factor of $\trunc{F_{k+2}}{V_{k+2}}$.  Since $V_{k+2} \supseteq V_{k+2}'$ (property \ref{rec3} above), by Lemma~\ref{lem:compare trunc}\ref{fV tfWV} we have that $t_{k+2}'$ is the radial contour factor of $\trunc{t_{k+2}}{V_{k+2}'}$.  Therefore, by Lemma~\ref{lem:f tf same dep}, $t_{k+2}'$ and $\trunc{t_{k+2}}{V_{k+2}'}$ have the same radial departures, so by Lemma~\ref{lem:comp same dep}, $s_{k,k+1} \circ t_{k+2}'$ and $s_{k,k+1} \circ \trunc{t_{k+2}}{V_{k+2}'}$ have the same radial departures.  Thus, $s_{k,k+1} \circ \trunc{t_{k+2}}{V_{k+2}'}$ also has no negative radial departures.
\end{proof}

It follows from Claim~\ref{claim:no neg deps} and Proposition~\ref{prop:embed 0 accessible} that the inverse limit
\[ \varprojlim \left \langle [-1,1], s_{1,2}, [-1,1], \trunc{t_3}{V_3'}, [-1,1], s_{3,4}, [-1,1], \trunc{t_5}{V_5'}, \ldots \right \rangle \]
can be embedded in $\mathbb{R}^2$ with the point $\langle 0,0,\ldots \rangle$ accessible.  This completes the proof of Theorem~\ref{thm:main}.

\section{Discussion and questions}
\label{sec:discussion}

It is worth noting that the embedding of $\varprojlim \left \langle [-1,1],f_i \right \rangle$ produced in the proof of Proposition~\ref{prop:embed 0 accessible} in \cite{ammerlaan-anusic-hoehn2023} comes from the Anderson-Choquet Embedding Theorem \cite{anderson-choquet1959} applied to the inverse system $\langle [-1,1],f_i \rangle$.  As such, the embedding may be assumed to be \emph{thin} (see e.g.\ \cite{debski-tymchatyn1993}), meaning that for any $\varepsilon > 0$, there is a chain of connected open sets in the plane, of diameters less than $\varepsilon$, covering the image of the embedding.  Note that there exist embeddings of arc-like continua in $\mathbb{R}^2$ which are not thin, see e.g.\ \cite{bing1951} and \cite{debski-tymchatyn1993}.  More details on this sharper version Proposition~\ref{prop:embed 0 accessible}, as well as applications related to the original motivation of Nadler and Quinn to study accessible points of arc-like continua, are given in \cite{ammerlaan-hoehn2024}.

\medskip

Given a planar continuum $X$ and two embeddings $\Phi_1,\Phi_2 \colon X \to \mathbb{R}^2$ of $X$ into $\mathbb{R}^2$, we say $\Phi_1$ and $\Phi_2$ are \emph{equivalent embeddings} if there exists a homeomorphism $h \colon \mathbb{R}^2 \to \mathbb{R}^2$ such that $\Phi_2 = h \circ \Phi_1$.  Clearly, if $\Phi_1$ and $\Phi_2$ are equivalent embeddings, then for any point $x \in X$, $\Phi_1(x)$ is an accessible point of $\Phi_1(X)$ if and only if $\Phi_2(x)$ is an accessible point of $\Phi_2(X)$.

Recall that a continuum is \emph{indecomposable} if it is not the union of two of its proper subcontinua.  Mayer \cite[Question~4.2.2]{mayer1982} asked in 1982, is it true that for every indecomposable arc-like continuum $X$, there exists an uncountable family of pairwise inequivelant embeddings of $X$ in $\mathbb{R}^2$?  This question has also appeared in \cite[Problem~141]{lewis1983} and \cite[Question~1]{anusic-bruin-cinc2017}.  These authors establish some partial results for certain arc-like continua, namely Knaster continua in \cite{mayer1982}, the pseudo-arc in \cite{lewis1981}, and arc-like continua which are inverse limits of a single unimodal bonding map with positive topological entropy in \cite{anusic-bruin-cinc2017}.  From Theorem~\ref{thm:main} we now deduce a complete affirmative answer to this question.

\begin{cor}
\label{cor:ineq embeddings}
Let $X$ be an arc-like continuum which contains a non-degenerate indecomposable subcontinuum.  Then there are uncountably many pairwise inequivalent embeddings of $X$ in $\mathbb{R}^2$.
\end{cor}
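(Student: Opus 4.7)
The plan is to leverage Theorem~\ref{thm:main} together with a classical restriction on accessibility in planar indecomposable continua. First, fix a non-degenerate indecomposable subcontinuum $Y \subseteq X$. Since $Y$ is non-degenerate and indecomposable, it has uncountably many composants; enumerate them as $\{C_\alpha\}_{\alpha \in I}$ with $I$ uncountable. For each $\alpha \in I$, choose a point $x_\alpha \in C_\alpha$, and apply Theorem~\ref{thm:main} to obtain an embedding $\Phi_\alpha \colon X \to \mathbb{R}^2$ for which $\Phi_\alpha(x_\alpha)$ is accessible.

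Next, I would define an equivalence relation $\sim$ on $I$ by declaring $\alpha \sim \beta$ if and only if $\Phi_\alpha$ and $\Phi_\beta$ are equivalent embeddings, and show that every $\sim$-equivalence class is at most countable. Granting this, since $I$ is uncountable there must be uncountably many such classes, producing uncountably many pairwise inequivalent embeddings among the $\Phi_\alpha$.

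The key observation for the countability of each class is that accessibility is a plane-homeomorphism invariant: if $\Phi_\beta = h \circ \Phi_\alpha$ for some homeomorphism $h \colon \mathbb{R}^2 \to \mathbb{R}^2$, then $\Phi_\alpha(x_\beta) = h^{-1}(\Phi_\beta(x_\beta))$ is accessible in $\Phi_\alpha(X)$, because $\Phi_\beta(x_\beta)$ is accessible in $\Phi_\beta(X)$. Moreover, any point of $\Phi_\alpha(Y)$ accessible in $\Phi_\alpha(X)$ is \emph{a fortiori} accessible in $\Phi_\alpha(Y)$, since $\mathbb{R}^2 \smallsetminus \Phi_\alpha(X) \subseteq \mathbb{R}^2 \smallsetminus \Phi_\alpha(Y)$. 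I would then invoke the classical result, going back to Mazurkiewicz, that in any planar embedding of a non-degenerate indecomposable continuum, the set of accessible points meets only countably many composants. This immediately gives that $\{\beta \in I : \alpha \sim \beta\}$ is countable.

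The main obstacle — indeed the only part not already supplied by Theorem~\ref{thm:main} — is the accessibility/composant restriction itself. One either cites a suitable form of this classical result (typically via the facts that $\mathbb{R}^2 \smallsetminus Y$ has at most countably many components and that, for each complementary domain $U$ of an indecomposable planar continuum, the points of $Y$ accessible from $U$ are confined to countably many composants), or supplies a short standalone proof using prime-end theory and indecomposability. Once that ingredient is in hand, the rest of the argument is essentially a counting step, and the references already cited for partial results (Mayer, Lewis, Anu\v{s}i\'{c}–Bruin–\v{C}in\v{c}) are the natural place to point the reader for the needed composant statement.
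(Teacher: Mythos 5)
Your overall structure is a valid alternative route, and it shares the key preliminary observations with the paper's own proof: restrict to a non-degenerate indecomposable subcontinuum $Y \subseteq X$, apply Theorem~\ref{thm:main} to $X$ to get embeddings making chosen points accessible, and note that a point of $\Phi(Y)$ accessible in $\Phi(X)$ is automatically accessible in $\Phi(Y)$ since $\mathbb{R}^2 \smallsetminus \Phi(X) \subseteq \mathbb{R}^2 \smallsetminus \Phi(Y)$. After that, the two arguments diverge. The paper does \emph{not} count composants at all; it considers, for every $p \in Y$, the set $A_p$ of points of $Y$ whose $\Phi_p$-image is accessible in $\Phi_p(Y)$, notes that $A_p$ is \emph{meager} by the theorem actually proved in \cite{mazurkiewicz1929}, that $p \in A_p$, and that equivalent embeddings yield the same set $A_p$; the Baire Category Theorem (applied to the complete metric space $Y$) then forces uncountably many distinct sets $A_p$, hence uncountably many inequivalence classes.

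Your version instead picks one point from each composant and tries to show each equivalence class of embeddings is countable. That step rests on the claim that in any planar embedding of a non-degenerate indecomposable continuum the accessible points are confined to countably many composants, which you attribute to Mazurkiewicz. This is the genuine gap. What Mazurkiewicz proved in the 1929 paper (and what this paper cites) is that the set of accessible points is of \emph{first category}, which is a statement about Baire category, not about composants. Meagerness does not imply the composant bound: a nowhere dense subset of $Y$ can easily meet uncountably many composants (e.g.\ a Cantor set transversal to the composant decomposition). So the composant statement, even if it is true, is a strictly stronger assertion that would need its own proof or a precise citation, and you acknowledge as much by calling it ``the main obstacle'' without supplying either. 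The Baire-category route in the paper is more economical precisely because it never needs any information about how the accessible set distributes across composants: one-point-per-class membership ($p \in A_p$) plus meagerness plus Baire already forces uncountably many classes. If you want to rescue your version, you should replace the composant claim with the meagerness theorem and switch to the Baire argument, or else locate and verify a genuine reference for the composant restriction rather than attributing it to \cite{mazurkiewicz1929}.
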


\begin{proof}
Suppose $Y$ is a non-degenerate indecomposable subcontinuum of $X$.  For each $p \in Y$, let $\Phi_p \colon X \to \mathbb{R}^2$ be an embedding of $X$ in $\mathbb{R}^2$ such that $\Phi_p(p)$ is an accessible point of $\Phi_p(X)$.  Given $p \in Y$, let $A_p$ be the set of all points $y \in Y$ such that $\Phi_p(y)$ is an accessible point of $\Phi_p(Y)$.  Then $p \in A_p$, and $A_p$ is a meager subset of $Y$ by \cite{mazurkiewicz1929}, since $Y$ is indecomposable.  Given $p_1,p_2 \in Y$, if $\Phi_{p_1}$ and $\Phi_{p_2}$ are equivalent embeddings, then $A_{p_1} = A_{p_2}$.  By the Baire Category Theorem, there are uncountably many different sets among the sets $A_p$, $p \in Y$.  Therefore, there are uncountably many pairwise inequivalent embeddings of $X$ in $\mathbb{R}^2$.
\end{proof}

The analogous question about inequivalent embeddings of \emph{hereditarily decomposable} arc-like continua, i.e.\ those which do not contain a non-degenerate indecomposable subcontinuum, remains open.

\begin{question}[Question~6 of \cite{anusic-bruin-cinc2020}]
Let $X$ be a hereditarily decomposable arc-like continuum which is not an arc.  Does there exist an uncountable family of pairwise inequivelant embeddings of $X$ in $\mathbb{R}^2$?
\end{question}

Again, partial results have been established for certain arc-like continua, namely the $\sin(\frac{1}{x})$-continuum in \cite{mayer1982}, and the Knaster $V\Lambda$-continuum in \cite{ozbolt2020}.

\medskip

By a \emph{pointed space}, we mean a pair $\langle X,x \rangle$ where $X$ is a space and $x \in X$.  Given finitely many pointed spaces $\langle X_i,x_i \rangle$, $i = 1,\ldots,n$, the \emph{wedge sum} of these pointed spaces is the quotient of the disjoint union $X_1 \sqcup \cdots \sqcup X_n$ by identifying the set $\{x_i,\ldots,x_n\}$ to a single point (here it is understood that $x_i$ is taken to be the distinguished point from the $i$-th space in the disjoint union $X_1 \sqcup \cdots \sqcup X_n$).  Given a positive integer $m \geq 3$, a \emph{simple $m$-od} is a space which is the union of $m$ arcs, which all have one common endpoint, and which are otherwise pairwise disjoint.  In other words, a simple $m$-od is homeomorphic to the wedge sum of $m$ copies of the pointed space $\langle [0,1],0 \rangle$.

Suppose $X_i$, $i = 1,\ldots,n$, are finitely many arc-like continua, and $x_i$ is a point of $X_i$ for each $i$.  It is straightforward to see that the wedge sum of the pointed spaces $\langle X_i,x_i \rangle$ is a simple-$(2n)$-od-like continuum, i.e.\ is homeomorphic to an inverse limit of simple $(2n)$-ods.  We observe in the next Corollary that each continuum constructed this way is planar.

\begin{cor}
Let $X_i$, $i = 1,\ldots,n$, be finitely many arc-like continua, and let $x_i$ be an arbitrarily chosen point from $X_i$ for each $i = 1,\ldots,n$.  Then the wedge sum of the pointed spaces $\langle X_i,x_i \rangle$, $i = 1,\ldots,n$, is a planar continuum.
\end{cor}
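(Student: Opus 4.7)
The plan is to use Theorem~\ref{thm:main} to place each $X_i$ into its own ``petal'' in the plane, with all petals meeting only at one common point $p$, and to arrange that the point $x_i$ is mapped to $p$ for every $i$. First, for each $i \in \{1,\ldots,n\}$, Theorem~\ref{thm:main} supplies an embedding $\Phi_i \colon X_i \to \mathbb{R}^2$ for which $\Phi_i(x_i)$ is an accessible point of $\Phi_i(X_i)$.

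The key intermediate step is the following classical disk-trapping observation: if $Y \subset \mathbb{R}^2$ is a continuum and $y \in Y$ is accessible, then there exists a closed topological disk $D \subset \mathbb{R}^2$ with $Y \subseteq D$, $y \in \partial D$, and $Y \cap \partial D = \{y\}$. Working in $S^2$, the accessibility path can be refined to a Jordan arc from $y$ to any chosen basepoint in the relevant component $U$ of $S^2 \smallsetminus Y$, lying in $U$ except for its endpoint $y$; the complement of this Jordan arc in $S^2$ is an open topological disk containing $Y \smallsetminus \{y\}$ (by the Schoenflies theorem applied to the arc), and its closure is the desired $D$. If accessibility is realized only from a bounded component of $\mathbb{R}^2 \smallsetminus Y$, one first applies a M\"obius transformation of $S^2$ to move that component to the one containing $\infty$.

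With the disk-trapping lemma in hand, arrange $n$ pairwise ``non-overlapping'' closed pie-slices at the origin, e.g.
\[ P_i = \{re^{i\theta} : 0 \leq r \leq 1,\ \alpha_i^- \leq \theta \leq \alpha_i^+\}, \]
with the angular intervals $[\alpha_i^-,\alpha_i^+]$ pairwise disjoint, so that $P_i \cap P_j = \{0\}$ for $i \neq j$ and each $P_i$ is a closed topological disk with $0 \in \partial P_i$. For each $i$, the disk-trapping lemma provides a closed disk $D_i \subset \mathbb{R}^2$ with $\Phi_i(X_i) \subseteq D_i$, $\Phi_i(x_i) \in \partial D_i$, and $\Phi_i(X_i) \cap \partial D_i = \{\Phi_i(x_i)\}$. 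By the Schoenflies theorem, any boundary homeomorphism $\partial D_i \to \partial P_i$ taking $\Phi_i(x_i)$ to $0$ extends to a self-homeomorphism $h_i$ of $\mathbb{R}^2$ carrying $D_i$ onto $P_i$. The composition $\Psi_i = h_i \circ \Phi_i$ is then an embedding $X_i \to P_i$ sending $x_i$ to $0$; since the images $\Psi_i(X_i)$ are pairwise disjoint outside $\{0\}$, their union is an embedded copy of the wedge sum of the pointed spaces $\langle X_i, x_i\rangle$ in $\mathbb{R}^2$.

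The main technical obstacle is the disk-trapping lemma itself, because the definition of accessibility provides only a path (possibly with self-intersections and accumulation) rather than a Jordan arc; however, extracting a Jordan arc from such a path inside an open connected subset of $\mathbb{R}^2$ is standard planar topology, and the rest of the argument is routine assembly using the Schoenflies theorem. Theorem~\ref{thm:main} is what makes the entire approach possible, as it guarantees that the chosen point $x_i$ can be placed at the common vertex $p$ in each petal.
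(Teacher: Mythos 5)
Your overall strategy is the same as the paper's: invoke Theorem~\ref{thm:main} for each $X_i$ and then glue the resulting embeddings together at the distinguished points using their accessibility. Where you diverge is in the gluing step. The paper connects the accessible points $\Phi_i(x_i)$ to a common point $z$ by arcs $L_i$ that are otherwise disjoint from $\bigcup_j\Phi_j(X_j)$ and from each other, notes that $L_1\cup\cdots\cup L_n$ is a simple $n$-od, and collapses it to a point using Moore's decomposition theorem; the quotient is again $\mathbb{R}^2$ and the image of $\bigcup_i\Phi_i(X_i)$ is the wedge sum. You instead want to trap each $\Phi_i(X_i)$ in a closed topological disk $D_i$ with $\Phi_i(x_i)\in\partial D_i$ and $\partial D_i\cap\Phi_i(X_i)=\{\Phi_i(x_i)\}$, and then map the $D_i$ onto pie slices via the Schoenflies theorem. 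That is a genuinely different route.

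There is, however, a real gap in your disk-trapping step. You claim that after refining the access path to a Jordan arc $\gamma$ from $y$ to a basepoint in the complementary domain, ``the complement of this Jordan arc in $S^2$ is an open topological disk\ldots and its closure is the desired $D$.'' But the closure of $S^2\smallsetminus\gamma$ in $S^2$ is all of $S^2$, and its topological boundary is the arc $\gamma$, not a Jordan curve; so $\overline{S^2\smallsetminus\gamma}$ is not a closed disk with $y$ on its boundary circle. What you actually need is a simple closed curve $J\subset\mathbb{R}^2$ passing through $y$, disjoint from $Y$ except at $y$, with $Y\smallsetminus\{y\}$ contained in the bounded complementary component of $J$. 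This statement is true, but it does not follow from just taking a closure: one has to build $J$ by hand, for example by passing to the Carath\'eodory (prime-end) compactification of the slit domain $S^2\smallsetminus\gamma$, where the slit has a circle of prime ends mapping two-to-one onto $\gamma$ except at its endpoints, finding a Jordan curve through the prime end over $y$ that separates (the preimage of) $Y\smallsetminus\{y\}$ from the rest of the boundary, and pushing it forward. Alternatively one can ``thicken'' $\gamma$ into a tube pinched at $y$, but this also requires care near $y$ since $Y$ may accumulate at $y$ from many directions. In short, your lemma is correct as a statement but the one-line justification does not prove it; the paper's use of Moore's theorem sidesteps the issue entirely and is the more economical argument.

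Two smaller points worth flagging: first, as you note, extracting a Jordan arc from the accessibility path is standard but does require an argument; second, when you position the $n$ embedded copies in the plane, you implicitly need each $\Phi_i(x_i)$ to remain accessible from the common unbounded complementary domain of $\bigcup_j\Phi_j(X_j)$, which is ensured by placing the images far apart (arc-like continua do not separate the plane, so each $\mathbb{R}^2\smallsetminus\Phi_i(X_i)$ is connected). These are routine but should be mentioned.
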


\begin{proof}
Apply Theorem~\ref{thm:main} to each of the arc-like continua to obtain embeddings $\Phi_i \colon X_i \to \mathbb{R}^2$, $i = 1,\ldots,n$, of $X_i$ into $\mathbb{R}^2$ such that $\Phi_i(x_i)$ is an accessible point of $\Phi_i(X_i)$.  We may assume the images $\Phi_i(X_i)$, $i = 1,\ldots,n$, are pairwise disjoint.  From a point $z \in \mathbb{R}^2 \smallsetminus \left( \Phi_1(X_1) \cup \cdots \cup \Phi_n(X_n) \right)$, we extend an arc $L_i$ from $z$ to $\Phi_i(x_i)$ which is otherwise disjoint from $\Phi_1(X_1) \cup \cdots \cup \Phi_n(X_n)$, and such that the arcs $L_i$ are, aside from their common point $z$, otherwise pairwise disjoint.  The union of these arcs $L_1 \cup \cdots \cup L_n$ is a simple $n$-od.  We take the quotient of $\mathbb{R}^2$ by collapsing $L_1 \cup \cdots \cup L_n$ to a single point.  The result is again homeomorphic to $\mathbb{R}^2$ (e.g.\ by Moore's Theorem \cite{moore1925}), and the image of $\Phi_1(X_1) \cup \cdots \cup \Phi_n(X_n)$ under the quotient projection is the wedge sum of the pointed spaces $\langle X_i,x_i \rangle$, $i = 1,\ldots,n$.
\end{proof}

\medskip

After Theorem~\ref{thm:main}, a natural next question is whether the same result can be established for multiple points simultaneously.

\begin{question}
Let $X$ be an arc-like continuum, and let $S \subset X$ be a finite subset of $X$.  Does there exist an embedding of $X$ in $\mathbb{R}^2$ for which all of the points of $S$ are accessible?
\end{question}

Equivalently, if a continuum is formed by attaching finitely many arcs at one of their endpoints to an arc-like continuum $X$, where the arcs are disjoint from each other and otherwise disjoint from $X$, is the resulting tree-like continuum planar?

\bibliographystyle{amsplain}
\bibliography{Truncations}

\end{document}